\newtheorem{thm}{Theorem}[section]
\newtheorem{prop}[thm]{Proposition}
\newtheorem{lem}[thm]{Lemma}
\newtheorem{cor}[thm]{Corollary}
\theoremstyle{definition}
\newtheorem{defn}[thm]{Definition}
\newtheorem{Exercise}[thm]{Exercise}
\theoremstyle{remark}
\newtheorem{rem}[thm]{Remark}
\newtheorem{exm}[thm]{Example}
    \def\HSt{%
       \setbox0=\hbox{$\widehat{\mathit{HS}}$}
       \setbox1=\hbox{$\mathit{HS}$}
       \dimen0=1.1\ht0
       \advance\dimen0 by 1.17\ht1
       \smash{\mskip2mu\raise\dimen0\rlap{%
          \begin{turn}{180}
              {$\widehat{\phantom{\mathit{HS}}}$}
           \end{turn}} \mskip-2mu    
                \mathit{HS}
    }{\vphantom{\widehat{\mathit{HS}}}}{}}
    \def\HMt{%
       \setbox0=\hbox{$\widehat{\mathit{HM}}$}
       \setbox1=\hbox{$\mathit{HM}$}
       \dimen0=1.1\ht0
       \advance\dimen0 by 1.17\ht1
       \smash{\mskip2mu\raise\dimen0\rlap{%
          \begin{turn}{180}
              {$\widehat{\phantom{\mathit{HM}}}$}
           \end{turn}} \mskip-2mu    
                \mathit{HM}
    }{\vphantom{\widehat{\mathit{HM}}}}{}}
    \newcommand{\HMb}{\overline{\mathit{HM}}}
\newcommand{\HMf}{\widehat{\mathit{HM}}}
    \newcommand{\HSb}{\overline{\mathit{HS}}}
\newcommand{\HSf}{\widehat{\mathit{HS}}}
\newcommand{\spin}{\mathfrak{s}}
\newcommand{\ztwo}{\mathbb{F}}
\newcommand{\Pin}{\mathrm{Pin}(2)}
\newcommand{\Rin}{\mathcal{R}}
\newcommand{\V}{\mathcal{V}}
\begin{document}

\title{Lectures on monopole Floer homology} 

\author{Francesco Lin}
\address{Department of Mathematics, Massachusetts Institute of Technology} 
\email{linf@math.mit.edu}

\begin{abstract}
These lecture notes are a friendly introduction to monopole Floer homology. We discuss the relevant differential geometry and Morse theory involved in the definition. After developing the relation with the four-dimensional theory, our attention shifts to gradings and correction terms. Finally, we sketch the analogue in this setup of Manolescu's recent disproof of the long standing Triangulation Conjecture. 
\end{abstract}

\maketitle

\section*{Introduction}

The present notes are a friendly introduction to monopole Floer homology for low dimensional topologists. The topic has its definitive (and essentially self-contained) reference \cite{KM} in which the whole theory is developed in detail. On the other hand, the monograph is quite scary at a first sight, both because of its size and its demanding analytical content (which might be stodgy to many people in the field). Our goal here is to explain the subject without going too deep in the details, and try to convey the key ideas involved. Of course we need to assume some background from the reader. In particular, we expect two things.
\begin{itemize}
\item A basic understanding of Seiberg-Witten theory in dimension four, following for example the classic reference \cite{Mor} (which contains much more than we require). In particular we expect the reader to have digested the differential geometry needed to write down the equations, and to have an idea on how one can use them to define invariants of smooth four manifolds with $b_2^+\geq 2$.
\item A solid understanding of Morse theory in finite dimensions, including the Morse-Witten chain complex. The reader should know how to prove a priori invariance (i.e. without referring to the isomorphism with singular homology) using continuation maps. There are many good references for this, see for example \cite{Hut} for a nice introduction and \cite{Sch} for a more thorough discussion.
\end{itemize}
Roughly speaking, the main complication is that the Seiberg-Witten equations are invariant under an $S^1$-action which is not free. In usual Morse homology (in finite dimensions), we try to understand the homology of a manifold $M$ using a Morse function $f$ on it. In our case, $M$ comes with an $S^1$-action and the goal we have in mind is to understand the $S^1$-{equivariant} homology of $M$. To do this, we will introduce a suitable model in Morse homology.
\par
Of course a basic knowledge of the cousin theory Heegaard Floer homology (\cite{OS1}, \cite{OS2}) will be helpful when dealing with the formal aspects of the theory, but we will not assume that.
\\
\par

The theory has many interesting applications in the study of low dimensional topology. Many of these are already outlined in the last Chapters of \cite{KM} and we crafted these notes so that the reader should be able to read those after digesting them. Furthermore the proof of many interesting results in Heegaard Floer homology is formally identical in our setting. For this reason we will build up towards an application which is missing in both setups, namely a disproof of the \textit{Triangulation Conjecture in higher dimensions}. This almost one-hundred-year old problem was settled by Manolescu using his Seiberg-Witten Floer homotopy type approach (\cite{Man2}). The papers \cite{Man3} and \cite{Man4} provide very nice accounts of the background of the problem. In the last few sections of these notes we will build toward the alternative (but formally identical) argument of \cite{Lin}, and we refer the reader to those surveys for a more detailed discussion of the Triangulation Conjecture itself.
\\
\par
Of course there are many sins of omission in the present lectures. Among the others:
\begin{itemize}
\item We will not be able to provide interesting examples of computations. Some of these can be obtained using the surgery exact triangle, see \cite{KMOS} and Chapter $42$ of \cite{KM}.
\item Throughout the notes, we will forget about orientations of moduli spaces and use only coefficients in $\ztwo$, the field with two elements.
\item We will not discuss the applications of this story to the gluing properties of the Seiberg-Witten invariants, which is indeed the original motivation for the definition of the Floer homology groups. This is nicely described in Chapter $3$ of \cite{KM}. Similarly, the reader can find there a discussion of local coefficients.
\item We will not discuss the beautiful non vanishing result which plays a key role in Taubes' proof of the Weinstein conjecture in dimension three (\cite{Tau}). The details of this are provided in Chapters $33-35$ in \cite{KM}.
\end{itemize}
Throughout the lectures we will provide some exercises (with hints) which are worth thinking about. The solution to most of them can be found in \cite{KM}.

\vspace{1cm}
\section{The formal picture}
We describe the structure of the invariants we will construct. Again we will only consider coefficients in $\ztwo$, the field with two elements. In these notes will focus on closed oriented connected three manifolds. To such a $Y$ we associate three $\ztwo$-vector spaces
\begin{equation*}
\HMt_{*}(Y),\quad \HMf_{*}(Y),\quad \HMb_{*}(Y)
\end{equation*}
called the monopole Floer homology groups. These are read respectively \textit{HM-to}, \textit{HM-from} and \textit{HM-bar}. These decompose according to the spin$^c$ structures on the three manifold $Y$, i.e.
\begin{equation*}
\HMt_{*}(Y)=\bigoplus_{\spin\in\mathrm{Spin}^c(Y)}\HMt_{*}(Y,\spin)
\end{equation*}
Later, we will review more in detail what spin$^c$ structures are. For the moment, one should just keep in mind that they are an affine space over $H^2(Y;\mathbb{Z})$ and
each of the $\HMt_{*}(Y,\spin)$ is a relatively $\mathbb{Z}/d\mathbb{Z}$-graded vector space, where $d$ is the integer given by the positive (and necessarily even) generator of the image of the map
\begin{align*}
H^1(Y;\mathbb{Z})&\rightarrow\mathbb{Z}\\
x&\mapsto \langle c_1(\spin)\cup x,[Y]\rangle
\end{align*}
Here $c_1(\spin)$ denotes the first Chern class of the spin$^c$ structure $\spin$. After reducing to $\mathbb{Z}/2\mathbb{Z}$ this relative grading can be enhanced to an absolute grading, so that it makes sense to talk about the even and odd components of the monopole Floer groups. Furthermore, when $c_1(\spin)$ is torsion, the relative $\mathbb{Z}$ grading can be lifted to an absolute $\mathbb{Q}$-grading.
\par
The groups are also modules over the ring
\begin{equation}\label{ring}
\Lambda_*(H_1(Y;\mathbb{Z})/\mathrm{Tors})\otimes \ztwo[U]
\end{equation}
where the action of the elements $H_1(Y;\mathbb{Z})$ has degree $-1$ while the action of $U$ has degree $-2$. The $\ztwo[U]$-action is interpreted as follows. As briefly mentioned in the introduction, the monopole Floer homology groups should be thought as $S^1$-equivariant homology groups (of an infinite dimensional space). In particular they are modules over the $S^1$-equivariant homology of the point (which is $\ztwo[U]$). This is given by the cap product induced by the inclusion of a point.
\\
\par
Roughly speaking, the Floer homology groups are the middle dimensional homology of an infinite dimensional manifold with boundary $\mathcal{B}^{\sigma}$. In particular, the three groups are respectively the homology of the space, the homology relative to the boundary and the homology of the boundary. This is the intuition behind the next result.
\begin{prop}
The monopole Floer groups fit in an exact triangle
\begin{equation}\label{LES}
\cdots \stackrel{i_*}{\longrightarrow} \HMt_{*}(Y)\stackrel{j_*}{\longrightarrow}\HMf_{*}(Y)\stackrel{p_*}{\longrightarrow} \HMb_{*}(Y)\stackrel{i_*}{\longrightarrow} \HMt_{*}(Y)\stackrel{j_*}{\longrightarrow} \cdots
\end{equation}
where the maps $i_*$ and $j_*$ are even while $p_*$ is odd. Furthermore these are $\ztwo[U]$-module maps.
\end{prop}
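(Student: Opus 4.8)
In the Morse-theoretic model underlying the definition, each of the three groups is the homology of a chain complex whose generators fall into three families: the irreducible critical points, spanning a $\ztwo$-vector space $C^o$, and the two kinds of reducible critical points on the boundary of the blown-up configuration space, namely the boundary-stable ones (spanning $C^s$) and the boundary-unstable ones (spanning $C^u$). Concretely,
\[
\check{C}=C^o\oplus C^s,\qquad \hat{C}=C^o\oplus C^u,\qquad \bar{C}=C^s\oplus C^u,
\]
with differentials $\check\partial,\hat\partial,\bar\partial$ assembled from the operators counting gradient trajectories between critical points of a prescribed pair of types, together with the correction terms these must carry because a trajectory leaving a boundary-stable critical point, or limiting to a boundary-unstable one, is forced to stay in the boundary. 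All of these operators satisfy a package of quadratic relations generalizing the identity $\partial^2=0$, coming from the analysis of the ends of the one-dimensional moduli spaces; these relations are the only structural input the argument needs. The triangle \eqref{LES} is the algebraic shadow of the long exact sequence of the pair $(\mathcal{B}^\sigma,\partial\mathcal{B}^\sigma)$, but because that space is infinite dimensional one must extract it directly from the three Morse complexes.

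The plan is to exhibit \eqref{LES} as the long exact sequence of a mapping cone. First I would write down the chain map $i\colon\bar{C}\to\check{C}$ representing $i_*$: on $C^s$ it is the identity, and on $C^u$ it is the operator counting trajectories that leave a boundary-unstable critical point and run into the interior, corrected as the quadratic relations require; that $i$ is a chain map is then pure bookkeeping with those relations. Forming the mapping cone of $i$ produces automatically a long exact sequence
\[
\cdots\longrightarrow \bar{H}_{*}\xrightarrow{\,i_{*}\,}\check{H}_{*}\longrightarrow H_{*}\!\bigl(\operatorname{Cone}(i)\bigr)\longrightarrow \bar{H}_{*-1}\xrightarrow{\,i_{*}\,}\cdots,
\]
whose connecting homomorphism lowers degree by one. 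It then remains to identify $\operatorname{Cone}(i)$, whose underlying group is $\check{C}\oplus\bar{C}[-1]$, with $\hat{C}$: the two copies of $C^s$ it contains are paired by the identity component of $i$, so a Gaussian-elimination argument cancels them and leaves the group $C^o\oplus C^u$, and one verifies that the surviving differential is exactly $\hat\partial$ --- again using only the quadratic relations. Transporting the long exact sequence across this identification yields \eqref{LES}, with $j_*$ realized as the composite $\check{H}_*\to H_*(\operatorname{Cone}(i))\cong\hat{H}_*$ and $p_*$ as the cone's connecting homomorphism; hence $i_*$ and $j_*$, built out of identity maps and trajectory counts, preserve the grading and are even, while $p_*$ has degree $-1$ and is odd.

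Finally, $\ztwo[U]$-linearity of $i_*,j_*,p_*$ holds because the $U$-action on each complex is induced by a degree $-2$ endomorphism that commutes --- up to a chain homotopy built from the same moduli data --- with the operators entering $i$ and the cancellation, so that $i,j,p$ intertwine the $U$-actions on homology. The step I expect to be the main obstacle is precisely the explicit identification $\operatorname{Cone}(i)\simeq\hat{C}$ together with the matching of differentials: this forces one to keep careful track of the precise correction terms in the definitions of $\check\partial$ and $\hat\partial$, and of the grading conventions under which the identification is degree-preserving. Everything else is formal manipulation of the structural relations among the trajectory counts --- of the kind already familiar from the finite-dimensional Morse theory of a manifold with boundary, where \eqref{LES} is just the long exact sequence of the pair.
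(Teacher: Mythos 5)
Your plan matches the paper's intended argument: the paper reduces \eqref{LES} to the finite-dimensional exercise about the long exact sequence of $(\mathcal{B}^{\sigma},\partial\mathcal{B}^{\sigma})$ in Morse theory with boundary, and the mapping-cone construction you describe (with the Gaussian-elimination identification of $\operatorname{Cone}(i)$ with $\hat{C}$, using only the quadratic trajectory-count identities) is precisely how that exercise is carried out, following Chapter 22 of \cite{KM}. One small bookkeeping point worth fixing: you wrote $\bar{C}=C^s\oplus C^u$, but the paper's convention is $\bar{C}_k=C^s_k\oplus C^u_{k+1}$, and that degree shift is exactly what makes $\operatorname{Cone}(i)_k$ cancel to $\hat{C}_k$ in the same degree $k$, so keeping it explicit saves some confusion in the verification you correctly flag as the crux.
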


Of course, one can also define the monopole Floer cohomology groups
\begin{equation*}
\HMt^{*}(Y),\quad \HMf^{*}(Y),\quad \HMb^{*}(Y)
\end{equation*}
and the analogue of Poincar\'e and Lefschetz duality holds as follows.
\begin{prop}\label{poincare}
There are canonical isomorphisms of $\ztwo[U]$-modules
\begin{align*}
\HMt^{*}(Y)&\cong \HMf_{*}(-Y)\\
\HMf^{*}(Y)&\cong \HMt_{*}(-Y)\\
\HMb^{*}(Y)&\cong \HMb_{*}(-Y)
\end{align*}
for some appropriate grading shift. Here $-Y$ denotes the manifold with the opposite orientation.
\end{prop}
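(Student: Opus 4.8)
The plan is to establish the duality at the level of the Morse-theoretic chain complexes computing the three flavors, using the elementary fact that reversing the orientation of $Y$ negates the Chern--Simons--Dirac functional. First I would recall that, after a suitable perturbation, $\HMt_*(Y,\spin)$, $\HMf_*(Y,\spin)$ and $\HMb_*(Y,\spin)$ are the homologies of complexes of $\ztwo$-vector spaces of the shape $\check{C}=C^o\oplus C^s$, $\hat{C}=C^o\oplus C^u$ and $\bar{C}=C^s\oplus C^u$, where $C^o$, $C^s$ and $C^u$ are freely generated by the irreducible, boundary-stable and boundary-unstable critical points of the functional on the blown-up configuration space $\mathcal{B}^\sigma$, and the differentials are assembled out of matrices counting unparametrized gradient flow lines between these, including the correction terms built from the reducible (``boundary'') matrices $\bar\partial^u_s$ and $\bar\partial^s_u$. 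By definition the cohomology groups are the homologies of the dual complexes $\check{C}^\vee$, $\hat{C}^\vee$, $\bar{C}^\vee$, where $(-)^\vee=\mathrm{Hom}_{\ztwo}(-,\ztwo)$.

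The geometric input is as follows. Under the canonical identification $\mathrm{Spin}^c(Y)\cong\mathrm{Spin}^c(-Y)$ and of the corresponding configuration spaces, the Chern--Simons--Dirac functional of $(-Y,\spin)$ is minus that of $(Y,\spin)$; choosing the metric and perturbation on $-Y$ to be the ``reverse'' of those on $Y$, the blown-up gradient vector field is literally negated. Hence the critical set is unchanged as a set, but stable and unstable directions are interchanged (in particular the direction normal to the boundary), so the induced bijection on critical points matches $C^o(Y)$ with $C^o(-Y)$, $C^s(Y)$ with $C^u(-Y)$, and $C^u(Y)$ with $C^s(-Y)$. Moreover a downward trajectory of the $Y$-functional from $a$ to $b$, reparametrized by $t\mapsto -t$, is exactly a downward trajectory of the $-Y$-functional from $b$ to $a$, and this induces a diffeomorphism of the relevant moduli spaces; since everything is over $\ztwo$ there are no signs, so all the matrix entries defining the $-Y$ differentials (including the correction terms, which are carried to their analogues for $-Y$ by the same transposition) are the transposes of those for $Y$. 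It follows that $\hat{C}(-Y)\cong\check{C}(Y)^\vee$, $\check{C}(-Y)\cong\hat{C}(Y)^\vee$ and $\bar{C}(-Y)\cong\bar{C}(Y)^\vee$ as complexes of $\ztwo$-vector spaces. Taking homology, and using that over a field the homology of a dual complex is canonically the dual of the homology, yields the three claimed isomorphisms of ungraded vector spaces.

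Two points remain: upgrading to $\ztwo[U]$-modules and identifying the grading shift. The $U$-action is induced by an explicit degree $-2$ geometric chain map which is natural with respect to the orientation-reversal identification, so it transposes to the $U$-action on the dual complex; hence the isomorphisms above are automatically $\ztwo[U]$-linear (and, similarly, compatible with the $\Lambda_*(H_1(Y;\mathbb{Z})/\mathrm{Tors})$-action). For the grading one compares the grading of a critical point $a$ computed in $(Y,\spin)$ with the one computed in $(-Y,\spin)$: a spectral-flow and index computation on the associated mapping cylinder shows the two differ by a global constant depending only on $Y$ and $\spin$ (ultimately a combination involving $b_1(Y)$ and the index of a Dirac operator), which is the ``appropriate grading shift'' in the statement. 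I expect the real work to lie not in the algebra (which is essentially formal once one is over $\ztwo$) but in making the identification of moduli spaces rigorous: one must carefully set up the reversed geometric data on $-Y$, check that transversality is preserved, appeal to the metric- and perturbation-independence of the Floer groups, and then carry out the index bookkeeping that pins down the precise grading shift.
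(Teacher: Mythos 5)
Your argument is essentially the same as the paper's: both exploit that reversing the orientation of $Y$ negates the Chern--Simons--Dirac functional, so critical points are identified while boundary-stable and boundary-unstable types swap, time reversal of trajectories transposes all the matrix entries, and this yields chain-level isomorphisms $\check{C}_*(-Y)\cong\hat{C}^*(Y)$, $\hat{C}_*(-Y)\cong\check{C}^*(Y)$, $\bar{C}_*(-Y)\cong\bar{C}^*(Y)$, i.e.\ duals over $\ztwo$. Your additional remarks on the $U$-action and the grading shift go slightly beyond what the paper spells out, but they are the correct bookkeeping and do not change the route of the proof.
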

On cohomology the action of $\ztwo[U]$ is by cup product, so in particular multiplication by $U$ has degree $2$.
\\
\par
The simplest case is that of $S^3$. Recall that in this case there is a unique spin$^c$ structure.
\begin{prop}\label{S3}
We have the identifications as absolutely graded $\ztwo[U]$-modules:
\begin{align*}
\HMt_{*}(S^3)&\cong \ztwo[U^{-1},U]/\ztwo[U]\\
\HMf_{*}(S^3)&\cong \ztwo[U]\langle-1\rangle\\
\HMb_{*}(S^3)&\cong \ztwo[U^{-1},U].
\end{align*}
\end{prop}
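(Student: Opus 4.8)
The plan is to put the round metric on $S^3$ and use the positivity of its scalar curvature to make the critical set as simple as possible, then read off the three groups. First I would recall the Weitzenb\"ock formula for the Dirac operator: if $(\Phi,A)$ solves the unperturbed Seiberg--Witten equations on $S^3$, then taking the $L^2$ inner product of $D_A\Phi=0$ with $\Phi$ and substituting the curvature equation gives, after integrating by parts, an identity of the form
\[
0=\|\nabla_A\Phi\|^2+\tfrac14\int_{S^3}s\,|\Phi|^2+\tfrac14\int_{S^3}|\Phi|^4 .
\]
Since the scalar curvature $s$ is a positive constant this forces $\Phi\equiv 0$, so every solution is reducible; and since $H^1(S^3;\mathbb{R})=0$ the curvature equation then leaves exactly one reducible solution up to gauge. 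Transversality requires a perturbation, so the next point is that this picture is stable: by compactness of the Seiberg--Witten moduli spaces together with the a priori bound on $\|\Phi\|$ coming from $s>0$, every sufficiently small perturbation still has no irreducible critical points and exactly one reducible one. Hence the three Floer complexes of $S^3$ are generated entirely by the critical points arising from this single reducible orbit.

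Next I would analyse that orbit in the blow-up $\mathcal{B}^\sigma$. After a generic small perturbation the reducible contributes one critical point for each eigenvalue of the perturbed Dirac operator --- which is invertible on the round $S^3$, since $D^2=\nabla^*\nabla+s/4>0$, with spectrum unbounded in both directions --- and these split into the boundary-stable and the boundary-unstable ones according to the sign of the eigenvalue, the gradings within each family forming an arithmetic progression of step $2$. The group $\HMb_*(S^3)$ is pinned down by a general structural fact: for any $Y$ with $b_1(Y)=0$ and torsion spin$^c$ structure, $\HMb_*(Y,\spin)$ is a grading shift of $\ztwo[U^{-1},U]$ as a $\ztwo[U]$-module, so here only its absolute grading is in question. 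The group $\HMt_*(S^3)$ is computed by the chain complex generated by the boundary-stable reducibles and $\HMf_*(S^3)$ by the one generated by the boundary-unstable reducibles (there are no irreducibles to contribute); since no two generators of either complex lie in adjacent gradings, the differentials vanish, and one obtains the positive tower $\ztwo[U^{-1},U]/\ztwo[U]$ and the negative tower $\ztwo[U]$ respectively, up to a grading shift. Throughout, $U$ acts as the $S^1$-equivariant parameter, lowering grading by $2$; its action and the $\ztwo[U]$-module structures are read directly off these descriptions, and everything is compatible with the exact triangle \eqref{LES}.

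It remains to fix the absolute $\mathbb{Q}$-grading, which is meaningful here because $c_1(\spin)=0$ is torsion; this amounts to computing the grading of the reducible critical point. That grading is governed by a spectral flow, and in Kronheimer--Mrowka's normalisation it is expressed through an Atiyah--Patodi--Singer type formula involving the eta invariant of the Dirac operator and a signature correction term. On the round $S^3$ the Dirac spectrum is symmetric about $0$, so its eta invariant vanishes, and the signature term vanishes as well; feeding this in puts the bottom generator of $\HMt_*(S^3)$ in degree $0$, the top generator of $\HMf_*(S^3)$ in degree $-1$ --- accounting for the shift $\langle -1\rangle$ in the statement --- and the distinguished generator of $\HMb_*(S^3)$ in degree $0$, and simultaneously records $d(S^3)=0$.

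The step I expect to be the main obstacle is making the perturbation argument rigorous: one needs the Morse--Smale condition for the blown-up gradient flow to be achievable by a perturbation small enough neither to revive an irreducible solution nor to disturb the simple structure of the reducible orbit. The estimate from positive scalar curvature is robust enough that this goes through, but turning it into a proof draws on the full analytic package --- compactness of the moduli spaces, the Banach space of tame perturbations, exponential decay along trajectories. A secondary, largely bookkeeping, difficulty is keeping the index-theoretic conventions straight so that the eta-invariant computation produces exactly the gradings in the statement rather than an overall shift; one could alternatively deduce part of this from the exact triangle \eqref{LES} together with a four-dimensional input coming from the ball $B^4$, but the positive-scalar-curvature computation is the most self-contained route.
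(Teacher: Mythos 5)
Your description of the critical set and the vanishing of the differentials is exactly the paper's argument: round metric, positive scalar curvature to kill irreducibles via Proposition \ref{weitz}, a single reducible via Proposition \ref{redsol}, stability of both features under a small regular perturbation, a doubly-infinite tower in the blow-up indexed by the Dirac eigenvalues, and the step-two grading gap forcing all differentials to vanish. Your route to the absolute grading through the Dirac eta invariant on the round $S^3$ is a slight variant of what the paper does (plugging $X=B^4$ into the definition of the rational grading), but you flag the alternative yourself and this is not a gap.

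What is missing is the $\ztwo[U]$-module structure, which is an explicit part of the statement. You pin down $\HMb_*(S^3)$ by appealing to the general fact that $\HMb_*(Y,\spin)$ is a shifted copy of $\ztwo[U^{-1},U]$ for any rational homology sphere with torsion $\spin$; but that is Proposition \ref{rational}, which the paper derives \emph{from} the $S^3$ computation (Examples \ref{exS3} and \ref{modS3}), so invoking it here runs in a circle. For $\HMt_*$ and $\HMf_*$ you simply say the module structure is ``read off'' from the graded description together with $\deg U=-2$; but one generator in each even degree $\geq 0$ is equally consistent with $\bigoplus_{k\geq 0}\ztwo\langle -2k\rangle$ with $U$ acting as zero, so this does not determine the module. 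The missing step is geometric: between critical points corresponding to consecutive eigenvalues the one-dimensional moduli space of trajectories is a copy of $\mathbb{C}P^{1}$ with two points removed (the projectivisation of the two-plane spanned by the two eigenspaces, sitting inside the $\mathbb{C}P^{\infty}$ in $\partial\mathcal{B}^{\sigma}$), and a generic representative of the Poincar\'e dual of $U$ meets its closure in exactly one point; feeding this into the cap-product chain maps $\check m(U),\hat m(U),\bar m(U)$ shows that $U$ carries each tower generator to the one two degrees below it. Without this you have the graded vector spaces but not the $\ztwo[U]$-module isomorphism claimed in Proposition \ref{S3}.
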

The angular brackets indicate the grading shift: for a graded module $M$ we define $M\langle d\rangle$ to be the module whose homogeneous part of degree $i$ consists of the homogeneous part of degree $i-d$ of $M$. The element with highest grading in $\HMf_{*}(S^3)$ has degree $-1$.
\par
Some qualitative aspects of this computation hold more in general.
\begin{prop}\label{rational}
Let $Y$ be a rational homology sphere. Then for every spin$^c$ structure $\spin$ we have an isomorphism of graded $\ztwo[U]$-modules
\begin{equation*}
\HMb_{*}(Y,\spin)\cong \HMb_{*}(S^3)
\end{equation*}
up to grading shift. The group $\HMt_*(Y,\spin)$ vanishes in degrees low enough, and the map $i_*$ is an isomorphism in degrees high enough.
\end{prop}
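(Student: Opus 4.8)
The plan is to use the fact that for a rational homology sphere $Y$, the Seiberg--Witten equations have no irreducible solutions which are blown-up reducibles in the relevant part of the configuration space at infinity — or more precisely, that the ``reducible'' critical point contributes a tail isomorphic to that of $S^3$. Concretely, I would argue as follows. First, since $H_1(Y;\mathbb{Z})$ is finite, $c_1(\spin)$ is automatically torsion, so we have an absolute $\mathbb{Q}$-grading; moreover $\Lambda_*(H_1(Y;\mathbb{Z})/\mathrm{Tors}) = \ztwo$, so the module structure \eqref{ring} is just $\ztwo[U]$. Next I would recall that for a suitable metric and perturbation there is a unique reducible critical point (coming from the unique, up to gauge, flat spin$^c$ connection), and in the blown-up picture $\mathcal{B}^\sigma$ this reducible gives rise to a copy of $\mathbb{CP}^\infty$'s worth of critical points, exactly as for $S^3$.

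The key step is then to analyze the subcomplex $\bar{C}_*$ generated by these reducible critical points: for a rational homology sphere the spectral flow computation shows that the differential restricted to this subcomplex has the same form as for $S^3$, hence its homology is $\HMb_*(S^3)$ up to an overall grading shift determined by the eta invariant / correction term of $(Y,\spin)$. This gives the first claim, $\HMb_*(Y,\spin) \cong \HMb_*(S^3)$ up to grading shift. For the statements about $\HMt_*$, I would combine two inputs: (i) there are only finitely many irreducible critical points (Morse theory is a finite-dimensional-type argument once the compactness from \cite{KM} is in place), so they all lie in some bounded range of degrees; and (ii) by the exact triangle \eqref{LES}, in degrees above this range the map $p_*\colon \HMf_* \to \HMb_*$ and hence the shifted-downward piece of the sequence forces $i_*\colon \HMb_* \to \HMt_*$ to be an isomorphism, while in degrees far below the range the reducible tail of $\HMb_*$ is ``eaten'' entirely by $\HMf_*$ so that $\HMt_*$ vanishes. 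More carefully, $\HMf_*$ is bounded below (it looks like $\ztwo[U]$ plus finite-rank junk), so $\HMt_*$, sitting in the exact triangle between the bounded-below $\HMf_*$ and a shift of $\HMb_* \cong \ztwo[U^{-1},U]$, must vanish in low degrees and agree with $\HMb_*$ in high degrees.

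The main obstacle I anticipate is making precise the claim that the subcomplex on the reducibles computes $\HMb_*(S^3)$ \emph{up to grading shift only}: this requires knowing that the relevant Dirac operator on $Y$, while it may have a different spectrum than on $S^3$, still has the property that the $U$-action (which shifts up and down the tower of reducible critical points) is injective/surjective in the appropriate directions — equivalently, that no eigenvalue crossing destroys the algebraic structure. In \cite{KM} this is handled by the general machinery of the ``$\partial^s_*$'' and the coupled Morse theory on the blow-up; I would quote that rather than reprove it. The grading shift itself is the correction term $d(Y,\spin)$ (up to normalization), whose well-definedness is exactly the content of the later sections on correction terms, so I would phrase the statement as ``up to grading shift'' and defer the identification of the shift. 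The vanishing and isomorphism statements for $\HMt_*$ are then essentially formal consequences of the exact triangle together with boundedness of $\HMf_*$, and should not present serious difficulty.
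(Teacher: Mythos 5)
Your plan for $\HMb_*$ is exactly the paper's proof, which is dispatched in a single sentence by observing that only reducible solutions are involved and then appealing to Examples \ref{exS3} and \ref{modS3}: after a small perturbation there is a unique reducible up to gauge, its blow-up gives a doubly infinite tower of non-degenerate critical points indexed by the simple eigenvalues of the perturbed Dirac operator, trajectories between consecutive ones form a one-dimensional family (as in Exercise \ref{traj}) so $\bar{\partial}=0$, and the $U$-action shifts two steps down the tower (Example \ref{modS3}). So the first part is fine.

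The second part of your argument has a gap. First, a sign slip: $\HMf_*(S^3)\cong\ztwo[U]\langle-1\rangle$ is nonzero in degrees $-1,-3,-5,\dots$, so it is bounded \emph{above}, not below as you write. More substantively, the exact-triangle-only deduction that $\HMt_*$ vanishes in low degrees does not close: in arbitrarily negative degrees both $\HMf_*$ and (a shift of) $\HMb_*\cong\ztwo[U^{-1},U]$ are nonzero, and the exact sequence
\begin{equation*}
\HMf_k\stackrel{p_*}{\longrightarrow}\HMb_{k-1}\stackrel{i_*}{\longrightarrow}\HMt_{k-1}\stackrel{j_*}{\longrightarrow}\HMf_{k-1}
\end{equation*}
by itself tells you only that $\HMt_{k-1}$ is squeezed between two nonzero groups; whether it vanishes depends on knowing $p_*$ is surjective and $j_*$ is injective in that range, which is precisely what one is trying to establish. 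The clean fix is a chain-level count: $\check{C}_k=C^o_k\oplus C^s_k$, and for $k$ small enough both summands vanish because there are only finitely many irreducible critical points (compactness, Proposition \ref{compactness}, gives a bounded range of degrees) and the stable boundary critical points correspond to \emph{positive} eigenvalues of the perturbed Dirac operator, hence sit in degrees bounded below. Symmetrically, $C^u_k$ vanishes for $k$ large, so in high degrees $\bar{C}_k=C^s_k=\check{C}_k$ and $i_*$ is an isomorphism. This is the argument the paper implicitly intends when it says the proposition follows from the two examples about reducibles.
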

In light of this result we have the identification of graded modules
\begin{equation*}
i_*(\HMb_*(Y))=\ztwo[U^{-1},U]/\ztwo[U]\langle -2h\rangle
\end{equation*}
\begin{defn}\label{froyshov}
The rational number $h$ is called the \textit{Fr\o yshov invariant} of the rational homology sphere $Y$.
\end{defn}
The Fr\o yshov invariant satisfies the following properties.
\begin{prop}
If $Y$ is a homology sphere, $h(Y)$ is an integer. This quantity is invariant under homology cobordism, i.e. if there is an oriented cobordism $W$ from $Y_-$ to $Y_+$ so that both inclusions induce isomorphism in homology then $h(Y_-)=h(Y_+)$.
\end{prop}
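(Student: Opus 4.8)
I would treat the two assertions separately; the homology cobordism invariance is the more substantial of the two.

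\emph{Integrality.} Let $Y$ be an integer homology sphere, with its unique spin$^c$ structure $\spin$ (so $c_1(\spin)=0$). By the discussion preceding Definition~\ref{froyshov} the Floer groups carry an absolute $\mathbb{Q}$-grading, and the relative grading is an honest $\mathbb{Z}$-grading. Since $\HMb_*(Y,\spin)\cong\HMb_*(S^3)$ up to a grading shift (Proposition~\ref{rational}) and $i_*\big(\HMb_*(Y,\spin)\big)=\ztwo[U^{-1},U]/\ztwo[U]\langle -2h\rangle$, it suffices to show that the absolute grading of $\HMt_*(Y,\spin)$ takes values in the \emph{even} integers, exactly as for $S^3$ in Proposition~\ref{S3}; this then forces $2h(Y)\in 2\mathbb{Z}$. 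This is a consequence of the construction of the absolute grading (\cite{KM}): the grading coset of $\HMt_*(Y,\spin)$ is pinned down by a spin$^c$ filling $(X,\spin_X)$ of $Y$ through the expression $\tfrac14\big(c_1(\spin_X)^2-2\chi(X)-3\sigma(X)\big)$ together with boundary terms depending only on $Y$, and because $Y$ is a $\mathbb{Z}$-homology sphere Poincar\'e--Lefschetz duality makes the intersection form of $X$ unimodular; hence $c_1(\spin_X)$ is characteristic, $c_1(\spin_X)^2\equiv\sigma(X)\pmod 8$, and a check of parities places the coset in $2\mathbb{Z}$.

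\emph{Homology cobordism invariance.} Let $W$ be an oriented cobordism from $Y_-$ to $Y_+$ for which both inclusions induce isomorphisms on integral homology; the hypothesis identifies $\mathrm{Spin}^c(Y_-)$ with $\mathrm{Spin}^c(Y_+)$, and we fix corresponding structures $\spin_\pm$. First one records the elementary topology of $W$: from $H_*(W,Y_\pm;\mathbb{Z})=0$ one gets $b_1(W)=0$, trivial intersection form, $\chi(W)=\sigma(W)=0$, and $H^2(W;\mathbb{Z})$ torsion, so any spin$^c$ structure $\spin_W$ on $W$ extending $\spin_\pm$ has $c_1(\spin_W)^2=0$; feeding this into the grading-shift formula, which then reads $\tfrac14\big(c_1(\spin_W)^2-2\chi(W)-3\sigma(W)\big)=0$, shows that the induced maps $\HMb_*(W)\colon\HMb_*(Y_-,\spin_-)\to\HMb_*(Y_+,\spin_+)$, $\HMt_*(W)$ and $\HMf_*(W)$ all preserve the absolute $\mathbb{Q}$-grading. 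The key lemma is that $\HMb_*(W)$ is an isomorphism: by Proposition~\ref{rational} both sides are copies of $\ztwo[U^{-1},U]$, so $\HMb_*(W)$ is a degree-zero $\ztwo[U]$-module map between two such copies and is hence $0$ or an isomorphism, and it is nonzero by the structural properties of the ``bar'' flavour for torsion spin$^c$ structures (which, like $HF^\infty$, is insensitive to homologically trivial cobordisms; see \cite{KM}).

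Granting the lemma, the conclusion is formal. By naturality of the exact triangle \eqref{LES} under cobordisms one has $\HMt_*(W)\circ i_*=i_*\circ\HMb_*(W)$ on $\HMb_*(Y_-,\spin_-)$; since $\HMb_*(W)$ is a grading-preserving surjection, $\HMt_*(W)$ restricts to a grading-preserving surjection of $\ztwo[U]$-modules
\begin{equation*}
\ztwo[U^{-1},U]/\ztwo[U]\langle -2h(Y_-)\rangle \;\twoheadrightarrow\; \ztwo[U^{-1},U]/\ztwo[U]\langle -2h(Y_+)\rangle .
\end{equation*}
As the source and target are shifted copies of the \emph{bounded below} module $\ztwo[U^{-1},U]/\ztwo[U]$ and the map is grading-preserving and onto, every grading occurring in the target also occurs in the source, which forces $h(Y_-)\ge h(Y_+)$; applying the same argument to the reversed cobordism $\overline W\colon Y_+\to Y_-$ gives the reverse inequality, hence $h(Y_-)=h(Y_+)$. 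The one nontrivial ingredient is the lemma that $\HMb_*(W)$ is an isomorphism: this is exactly where the geometry behind cobordism maps on the ``bar'' flavour --- control of the reducible locus over $W$ --- is genuinely used, whereas the remaining steps (the topology of $W$, the manipulation of the $\ztwo[U]$-action, and the accounting of grading conventions in the integrality statement) are routine.
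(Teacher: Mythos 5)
Your proof is correct and follows essentially the strategy that the paper spreads across Proposition~\ref{gradingprop}(1), Proposition~\ref{fro}, and Corollary~\ref{froy}: integrality comes from the unimodularity of the filling's intersection form and the congruence $c_1(\spin_X)^2\equiv\sigma(X)\pmod 8$ for characteristic vectors, and homology cobordism invariance from the commutative square relating $\HMb_{\bullet}(W)$, $\HMt_{\bullet}(W)$, and $i_*$ together with the isomorphism result for $\HMb_{\bullet}(W)$. The one place where you lean on a citation rather than an argument---that $\HMb_{\bullet}(W)$ is nonzero, hence an isomorphism---is precisely where the paper does the geometric work (a single regular reducible solution on $W^*$ since $b_1=b^+=0$, with blown-up solutions identified as projectivisations of kernel elements of $D^+_{A_0}$ with prescribed asymptotics), but you correctly isolate that as the nontrivial geometric input.
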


The final important piece is functoriality. Indeed many of the results discussed above (including invariance) follow from this. An oriented $spin^c$ cobordism $(W,\spin)$ between two three manifolds $(Y_-,\spin_-)$ and $(Y_+,\spin_+)$ gives rise to a module map
\begin{equation*}
\HMt_*(W,\spin):\HMt_{*}(Y_-,\spin_-)\rightarrow \HMt_{*}(Y_+,\spin_+).
\end{equation*}
The same statement holds for the other two versions of monopole Floer homology, and the maps commute with the maps in the long exact sequence (\ref{LES}). If we want to consider all spin$^c$ structures at the same time (which is convenient when discussing compositions), we need to consider the \textit{completed} monopole Floer groups
\begin{equation}\label{total}
\HMt_{\bullet}(Y),\quad \HMf_{\bullet}(Y),\quad \HMb_{\bullet}(Y),
\end{equation}
which are obtained by taking the completion with respect to a certain filtration defined by the gradings. For example, $\HMf_{\bullet}(S^3)$ is identified with $\ztwo[[U]]\langle-1\rangle$. The issue is that infinitely many spin$^c$ structures on $W$ might induce a non-trivial map. In particular, the sum of all these maps might not be well defined. Nevertheless, a cobordism induces a well defined map
\begin{align}\label{totalmap}
\begin{split}
\HMt_{\bullet}(W)&:\HMt_{\bullet}(Y_-)\rightarrow \HMt_{\bullet}(Y_+)\\
\HMt_{\bullet}(W)&=\bigoplus_{\spin\in\mathrm{Spin}^c(W)} \HMt_{\bullet}(W,\spin)
\end{split}
\end{align}
The induced maps compose in a functorial way as follows.
\begin{prop}\label{functoriality}
Given a cobordism $W_1$ from $Y_0$ to $Y_1$ and a cobordism $W_2$ from $Y_1$ to $Y_2$, we have that
\begin{equation*}
\HMt_{\bullet}(W_2\circ W_1)=\HMt_{\bullet}(W_2)\circ \HMt_{\bullet}(W_1).
\end{equation*}
where $W_2\circ W_1$ is the composite cobordism (from $Y_0$ to $Y_2$).
\end{prop}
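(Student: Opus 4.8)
The plan is to establish the identity first at the level of chain complexes, up to chain homotopy, and then pass to homology. Recall that after attaching half-infinite cylindrical ends to $W$ and choosing a generic metric and perturbation, one obtains a map of complexes $\check m(W)\colon \check C_\bullet(Y_-)\to\check C_\bullet(Y_+)$ whose matrix entries count, modulo $2$, the points of the zero-dimensional moduli spaces of finite-energy solutions to the (blown-up, perturbed) Seiberg--Witten equations on $W$ asymptotic to prescribed critical points over $Y_-$ and $Y_+$; the various entries are distinguished by whether these asymptotics lie in the irreducible, boundary-stable, or boundary-unstable strata, and incorporate the usual reducible correction terms. The induced map $\HMt_\bullet(W)$ on homology is independent of the chosen metric and perturbation — a fact already used in the construction and invariance statements — so we are free to compute $\HMt_\bullet(W_2\circ W_1)$ using a metric on $W_2\circ W_1$ that contains an isometric neck $[-T,T]\times Y_1$ of length $2T$, and to let $T\to\infty$.

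The core of the argument is the neck-stretching (gluing) analysis along $Y_1$. On one side there is a compactness theorem: any sequence of solutions on the stretched cobordism with uniformly bounded energy subconverges, after passing to the limit $T\to\infty$ and allowing breaking, to a triple consisting of a finite-energy solution on $W_1$ asymptotic over $Y_1$ to some critical point $\mathfrak{a}$, a (possibly empty, possibly broken) Seiberg--Witten trajectory on $\mathbb{R}\times Y_1$, and a finite-energy solution on $W_2$ asymptotic over $Y_1$ to the other end of that trajectory. On the other side there is the corresponding gluing theorem: for all $T$ large enough, every such matching pair of solutions over $W_1$ and $W_2$ (with the intermediate trajectory carrying the bookkeeping) arises by gluing from a genuine solution on the stretched cobordism, and on the zero- and one-dimensional strata this produces a bijection, respectively a cobordism, of the relevant moduli spaces. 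Counting points modulo $2$ and reading off the one-dimensional strata therefore exhibits $\check m(W_2\circ W_1)$ and $\check m(W_2)\circ\check m(W_1)$ — the matrix product over intermediate critical points, reducible corrections included — as chain homotopic, whence $\HMt_\bullet(W_2\circ W_1)=\HMt_\bullet(W_2)\circ\HMt_\bullet(W_1)$ on homology.

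It remains to reconcile this with the decomposition over spin$^c$ structures and the completion. A spin$^c$ structure on $W_2\circ W_1$ restricts to a pair $(\spin_1,\spin_2)$ on $(W_1,W_2)$ agreeing over $Y_1$, and conversely such a pair is realized by at least one $\spin$ on $W_2\circ W_1$; the ambiguity and obstruction are governed by the Mayer--Vietoris sequence in $H^2$, which merely reorganizes the sums without affecting the identity. The point that genuinely needs the completed groups of (\ref{total}) is convergence: for fixed asymptotics on the ends, the a priori curvature bounds force the contributing moduli spaces on each $W_i$ into only finitely many spin$^c$ structures within any bounded window of the grading filtration, so the total map (\ref{totalmap}) is well defined, and grouping the contributions to $\check m(W_2\circ W_1,\spin)$ by $\spin|_{W_1}$, $\spin|_{Y_1}$, and the intermediate critical point reassembles $\bigoplus_{\spin}\check m(W_2,\spin)\circ\bigoplus_{\spin}\check m(W_1,\spin)$ in $\HMt_\bullet$.

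I expect the main obstacle to be the gluing theory itself, particularly its behaviour near reducible solutions and along boundary-obstructed strata: one must show that the gluing map is a diffeomorphism of moduli spaces carrying the correct codimension-one boundary structure even after the blow-up of the configuration space, so that the mod-$2$ counts and the built-in correction terms transform compatibly, rather than just a topological identification of coarse moduli. The supporting technical points — achieving transversality for the long-neck metrics and ruling out energy escaping into the neck in the limit — are standard but not free.
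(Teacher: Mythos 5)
Your proposal follows the same neck-stretching strategy the paper uses: insert a long cylinder $[-T,T]\times Y_1$, appeal to compactness and gluing as $T\to\infty$, and read off a chain homotopy between $\check m(W_2\circ W_1)$ and $\check m(W_2)\circ\check m(W_1)$ from the boundaries of one-dimensional moduli spaces, then handle the spin$^c$ bookkeeping via the completion. The paper gives only a two-sentence sketch pointing to Chapter 26 of Kronheimer--Mrowka, and your write-up (including the correct identification of the delicate points: boundary-obstructed gluing and the spin$^c$ reorganization via Mayer--Vietoris) is a faithful expansion of that same argument.
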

The key point of functoriality is the nice interplay between three and four dimensional Seiberg-Witten theory. We will discuss this in quite detail.

\vspace{1cm}

\vspace{1cm}
\section{The Seiberg-Witten equations on a three manifold}\label{SWeq}
In this section we discuss the differential geometry involved in our construction. In particular, as mentioned in the Introduction, we will associate to a (closed, oriented, and connected) three manifold $Y$ (together with extra data) a manifold $M$ equipped with an $S^1$-action, and an $S^1$ invariant function $f$.
\\
\par
\textbf{Spin$^c$ structures. }We start by discussing spin$^c$ structures. There are many ways to introduce them (see also Exercise \ref{spinc}), but for our purposes the most natural one is the following.
\par
Suppose $Y$ is an oriented Riemannian $3$-manifold. Then a spin$^c$ structure is a hermitian rank $2$ bundle $S\rightarrow Y$ together with a Clifford multiplication
\begin{equation*}
\rho: TY\rightarrow \mathrm{Hom}(S,S).
\end{equation*}
The latter is a bundle map satisfying $\rho(v)^2=-|v|^2 1_S$ for each $v\in TY$. Very concretely, this means that given any oriented frame $e_1,e_2,e_3$ at a point $y$, we can find a basis of $S_y$ so that $\rho(e_i)$ is the Pauli matrix $\sigma_i$:
\begin{equation}\label{pauli}
\sigma_1=\begin{pmatrix}
i & 0\\
0 & -i
\end{pmatrix}, \quad
\sigma_2=\begin{pmatrix}
0 & -1\\
1 & 0
\end{pmatrix},\quad
\sigma_3=\begin{pmatrix}
0 & i\\
i & 0
\end{pmatrix}.
\end{equation}
These form standard basis of $\mathfrak{su}(S)$, the space of traceless skew-adjoint endomorphisms of $S$.
\begin{Exercise}
Check that $\rho$ is a bundle isometry if $\mathfrak{su}(2)$ is equipped with the hermitian product $\frac{1}{2}\mathrm{tr}(ab^*)$.
\end{Exercise}

We call $S$ the \textit{spinor bundle} and its sections \textit{spinors}. Spin$^c$ structures always exist: indeed, $TY$ is trivial for oriented three manifolds so we can pick a global trivialization $e_1,e_2,e_3$ and make it act on the trivial bundle $\mathbb{C}^2\times Y$ globally via the Pauli matrices.
\begin{Exercise}\label{spinc}
Some readers might be more familiar to the definition of a spin$^c$ structure as equivalence classes of non vanishing vector fields up to homotopy outside a ball. This correspond to ours as follows. Fix a unit length spinor $\Psi$. Then there is a unique vector field $X(\Psi)$ such that at each point the $i$ and $-i$ eigenspaces of $\rho(X(\Psi))$ are respectively $\mathbb{C}\Psi$ and its orthogonal complement. Show that different choices of the spinor give rise to vector fields homotopic outside a ball. What is the inverse of this map?
\end{Exercise}
The first Chern class $c_1(\spin)$ of the spin$^c$ structure is the first Chern class of the spinor bundle $S$. We say that a spin$^c$ structure is \textit{torsion} if $c_1(\spin)$ is torsion, and \textit{non-torsion} otherwise. We also have the following complete classification of spin$^c$ structures.
\begin{lem}\label{spincclass}
The set of spin$^c$ structures on $Y$ is an affine space over $H^2(Y,\mathbb{Z})$.
\end{lem}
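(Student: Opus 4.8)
The plan is to exhibit the set of spin$^c$ structures on $Y$, taken up to isomorphism, as a torsor over the group of isomorphism classes of hermitian line bundles on $Y$, and then to quote the standard identification of that group with $H^2(Y;\mathbb{Z})$ via the first Chern class (from $BU(1)=K(\mathbb{Z},2)$, or the exponential sheaf sequence). Existence has already been dispatched above, so what remains is to produce a free and transitive action. The action is easy to write down: if $(S,\rho)$ is a spin$^c$ structure and $L\to Y$ is a hermitian line bundle, then $(S\otimes L,\rho\otimes 1_L)$ is again one, since $\rho(v)^2=-|v|^2 1_S$ forces $(\rho(v)\otimes 1_L)^2=-|v|^2 1_{S\otimes L}$. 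I would first check the routine points: the isomorphism class of the result depends only on those of $(S,\rho)$ and $L$, and $(L\otimes L')\otimes S\cong L\otimes(L'\otimes S)$ while $\underline{\mathbb{C}}\otimes S\cong S$, so this is a genuine action of the tensor-product group of line bundles.

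The key tool for both freeness and transitivity is the bundle $H_\rho(S_1,S_2)\subset\mathrm{Hom}(S_1,S_2)$ whose fiber consists of the $\mathbb{C}$-linear maps intertwining the two Clifford multiplications, i.e. $\phi\circ\rho_1(v)=\rho_2(v)\circ\phi$ for all $v$. Working in a local oriented frame $e_1,e_2,e_3$ as in the text, both $S_1$ and $S_2$ are modeled on $\mathbb{C}^2$ with $\rho_i(e_j)$ the Pauli matrix $\sigma_j$; since $\{1_{\mathbb{C}^2},\sigma_1,\sigma_2,\sigma_3\}$ spans $M_2(\mathbb{C})$, Schur's lemma (irreducibility of the rank-$2$ Clifford module in dimension three) shows that the space of intertwiners is one-dimensional over $\mathbb{C}$. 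Hence $H_\rho(S_1,S_2)$ is a complex line bundle, and it carries a natural hermitian metric: for Clifford-linear $\phi$ the adjoint $\phi^*$ is again Clifford-linear (because $\rho(v)$ is skew-adjoint), so $\phi^*\phi$ is a nonnegative Clifford-linear self-map of $S_1$, hence a nonnegative scalar $|\phi|^2$. The evaluation map $S_1\otimes H_\rho(S_1,S_2)\to S_2$, $s\otimes\phi\mapsto\phi(s)$, is then an isomorphism of spin$^c$ structures, which gives transitivity: any two spin$^c$ structures differ by the line bundle $H_\rho(S_1,S_2)$.

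For freeness, suppose $L$ satisfies $(S\otimes L,\rho\otimes 1_L)\cong(S,\rho)$. Such an isomorphism is a nowhere-vanishing section of $H_\rho(S\otimes L,S)$; but $H_\rho(S\otimes L,S)\cong H_\rho(S,S)\otimes L^*$, and $H_\rho(S,S)$ is canonically the trivial bundle $\underline{\mathbb{C}}$, its fibers being $\mathbb{C}\cdot\mathrm{id}$ by Schur again. So the isomorphism trivializes $L^*$, whence $L\cong\underline{\mathbb{C}}$, and the action is free. Combined with the $c_1$-identification $\{\text{hermitian line bundles}\}/\!\!\cong\;\xrightarrow{\sim}H^2(Y;\mathbb{Z})$, this realizes $\mathrm{Spin}^c(Y)$ as an affine space over $H^2(Y;\mathbb{Z})$.

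The argument is almost entirely formal, so there is no real ``main obstacle'': the one genuinely geometric input is the irreducibility of the rank-$2$ Clifford module in dimension three, equivalently that $1$ and the three Pauli matrices span $M_2(\mathbb{C})$. The only step needing mild care is the bookkeeping with hermitian structures — verifying that $H_\rho$ inherits the metric described and that the evaluation map is, up to the obvious normalization, an isometry — and this is straightforward.
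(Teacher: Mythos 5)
Your proposal is correct and follows the same route as the paper: define the action of line bundles by tensor product, and use the irreducibility of the rank-$2$ Clifford module (Schur's lemma) to show that the bundle of Clifford-intertwiners between two spin$^c$ structures is a hermitian line bundle, giving freeness and transitivity. The paper itself only sketches this, relegating the bijection to Exercise~\ref{irr} with precisely the intertwiner-line-bundle hint you carry out in detail.
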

\begin{proof}
There is a natural bijection between $H^2(Y,\mathbb{Z})$ and the group of complex line bundles (with tensor product as operation) given by the first Chern class. Suppose we are given a spin$^c$ structure $(S_0,\rho_0)$. Then for a given hermitian line bundle $L$, we can define the pair
\begin{equation*}
(S_0\otimes L, \rho_0\otimes 1_L).
\end{equation*}
which is readily checked to be a spin$^c$ structure. The next exercise contains hints to prove the stated result.
\end{proof}
\begin{Exercise}\label{irr}
Using the fact that the representation of $\mathfrak{su}(2)$ on $S$ is irreducible, check that this map is actually a bijection. Hint: show that given two spin$^c$ structures $(S,\rho)$ and $(S',\rho')$, the space of bundle maps intertwining the Clifford multiplications is a complex line bundle.
\end{Exercise}

The Clifford multiplication can be first extended to cotangent vectors via the identification with tangent vectors using the metric, and then to forms using the rule
\begin{equation*}
\rho(\alpha\wedge\beta)=\frac{1}{2}\left( \rho(\alpha)\rho(\beta)+(-1)^{\mathrm{deg}(\alpha)\mathrm{deg}(\beta)}\rho(\beta)\rho(\alpha)\right).
\end{equation*}
Finally, we can extend it to complex forms.
\begin{Exercise}\label{hodge}
The Riemannian metric induces the Hodge star operator $\ast$. Show that $\rho(\ast\alpha)=-\rho(\alpha)$.
\end{Exercise}

\vspace{0.5cm}
\textbf{The configuration space and the Dirac operator. }There is a natural class of connections on the bundle $S$ which is compatible with both the hermitian metric and the Clifford multiplication. We say that a connection $B$ on $S$ is a \textit{spin$^c$ connection} if the associated covariant derivative $\nabla_B$ satisfies
\begin{equation*}
\nabla_B (\rho(X)\Psi)= \rho(\nabla X)\Psi+ \rho(X)\nabla_B\Psi
\end{equation*}
where $X$ is any vector field and $\Psi$ is any spinor. Here $\nabla$ is the Levi-Civita connection on $TY$. The space of spin$^c$ connections is an affine space over $\Omega^1(Y;i\mathbb{R})$, again by the irreducibility of the action of $\rho$ (see Exercise \ref{irr}). It is convenient in many cases to pass to the connection $B^t$ induced on the determinant line bundle $\mathrm{det} (S)$, so that we are working with genuine imaginary valued one forms: if $\tilde{B}-B=b\otimes 1_S$ then $\tilde{B}^t-B^t=2b$.
\\
\par
The \textit{configuration space} of the pair $(Y,\spin)$ is then defined to be the space
\begin{equation*}
\mathcal{C}(Y,\spin)=\{ (B,\Psi)\}
\end{equation*}
consisting of pairs of a spin$^c$ connection and a spinor.
\\
\par
Connections and spinors interact with each other via the Dirac operator $D_B$, which is the composition
\begin{equation*}
\Gamma(S)\stackrel{\nabla_B}{\longrightarrow} \Gamma(T^*X\otimes S)\longrightarrow \Gamma(S)
\end{equation*}
where the latter is the Clifford multiplication (extended to one forms, see the discussion following Exercise \ref{irr}). This is a first order self-adjoint elliptic operator, hence the spectral theorem implies the following.
\begin{lem}\label{spectral}
The eigenvalues of $D_B$ form a discrete subset of $\mathbb{R}$ which is infinite in both directions. Furthermore there is a complete orthonormal basis of eigenvectors for $D_B$.
\end{lem}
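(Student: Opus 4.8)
\emph{Proof proposal.} The plan is to reduce the statement to the spectral theorem for compact self-adjoint operators, applied not to $D_B$ itself but to one of its resolvents, and to extract the qualitative features of the spectrum from elliptic regularity together with an inspection of the principal symbol. First I would fix the analytic framework: complete the smooth sections $\Gamma(S)$ in the Sobolev norms $L^2_k(S)$, and record the two standard consequences of the ellipticity of the first-order operator $D_B$ on the closed manifold $Y$. The first is the elliptic estimate $\|\psi\|_{L^2_k} \le C\big(\|D_B\psi\|_{L^2_{k-1}} + \|\psi\|_{L^2}\big)$, which shows that $D_B$ extends to a bounded map $L^2_k(S)\to L^2_{k-1}(S)$ and, by bootstrapping, that every $L^2$ solution of $D_B\psi=\varphi$ with $\varphi$ smooth is itself smooth. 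The second is that $D_B$, viewed as an unbounded operator on $L^2(S)$ with initial domain $\Gamma(S)$, is symmetric --- integration by parts, using that $\rho(\xi)$ is skew-adjoint --- and in fact essentially self-adjoint, its closure having domain $L^2_1(S)$; this again rests on ellipticity together with the compactness of $Y$.

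Next I would produce a compact resolvent. Because $D_B$ is self-adjoint, $i\notin\operatorname{spec}(D_B)$, so $D_B-i:L^2_1(S)\to L^2(S)$ is a bijection --- injectivity from $\|(D_B-i)\psi\|^2=\|D_B\psi\|^2+\|\psi\|^2$, surjectivity from self-adjointness --- with bounded inverse $R:=(D_B-i)^{-1}$. Composing $R:L^2(S)\to L^2_1(S)$ with the compact Sobolev embedding $L^2_1(S)\hookrightarrow L^2(S)$ (Rellich's lemma) exhibits $R$ as a compact operator on $L^2(S)$; it is moreover normal, with $R^*=(D_B+i)^{-1}$. The spectral theorem for compact normal operators then gives an orthonormal basis $\{\psi_n\}$ of $L^2(S)$ consisting of eigenvectors of $R$, with eigenvalues $\mu_n\to 0$, each nonzero since $R$ is injective and each of finite multiplicity. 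Unwinding $R\psi_n=\mu_n\psi_n$ gives $D_B\psi_n=(i+\mu_n^{-1})\psi_n$, so each $\psi_n$ is an eigenvector of $D_B$; self-adjointness forces the eigenvalue $\lambda_n=i+\mu_n^{-1}$ to be real, and elliptic regularity makes $\psi_n$ smooth. Since $\mu_n\to 0$ we get $|\lambda_n|\to\infty$, and since each value $\lambda$ arises from only finitely many $n$, the set $\operatorname{spec}(D_B)$ is a discrete subset of $\mathbb{R}$ and $\{\psi_n\}$ is the asserted complete orthonormal eigenbasis.

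The remaining point --- that the spectrum is infinite in both directions --- is the one I expect to require the most care. That it is infinite is immediate, since $L^2(S)$ is infinite-dimensional; the content is that there are infinitely many positive and infinitely many negative eigenvalues, which, given discreteness and finite multiplicity, is equivalent to $D_B$ being unbounded both above and below. Here I would use that the principal symbol of $D_B$ at a nonzero covector $\xi$ is $i\rho(\xi)$, a self-adjoint endomorphism of $S$ with $(i\rho(\xi))^2=|\xi|^2$ and trace zero, hence with eigenvalues $+|\xi|$ and $-|\xi|$, each of multiplicity one; in particular the symbol is indefinite. Fixing a point $y_0$, a covector $\xi_0\ne 0$, a function $\phi$ with $d\phi(y_0)=\xi_0$, and a smooth section $u$ supported near $y_0$ with $u(y_0)$ a unit $-|\xi_0|$-eigenvector of $i\rho(\xi_0)$, I would test against the oscillatory spinors $\psi_t=e^{it\phi}u$: a short computation using $D_B(fu)=\rho(df)u+fD_B u$ gives $\langle D_B\psi_t,\psi_t\rangle = it\!\int_Y\langle\rho(d\phi)u,u\rangle + O(1)$, whose leading term, by the alignment of $u$, is a negative multiple of $t$, so $\langle D_B\psi_t,\psi_t\rangle\to-\infty$; choosing instead the $+|\xi_0|$-eigenvector shows $\langle D_B\psi_t,\psi_t\rangle\to+\infty$. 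Thus $D_B$ is neither bounded above nor bounded below, and a self-adjoint operator with discrete spectrum and this property must have infinitely many eigenvalues of each sign. (An alternative is to combine the symbol computation with the Weyl asymptotics for the non-negative operator $D_B^2$, but the oscillatory test spinors keep the argument self-contained.)
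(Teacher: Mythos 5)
Your proof is correct, and it fills in precisely the standard argument that the paper invokes without proof (the paper simply writes that ``the spectral theorem implies the following'' after noting $D_B$ is a first-order self-adjoint elliptic operator, with no further justification). Your route---essential self-adjointness on $L^2_1$, compact resolvent $(D_B-i)^{-1}$ via Rellich, spectral theorem for compact normal operators, then unwinding eigenvalues---is exactly the standard spectral theory the paper has in mind. You are also right that ``infinite in both directions'' is the one clause not directly delivered by the compact-resolvent theorem: it requires a separate argument, and your oscillatory-spinor test functions $e^{it\phi}u$ aligned with the $\pm|\xi_0|$-eigenspaces of the indefinite symbol $i\rho(\xi_0)$ are the right tool for it. There is nothing substantive to correct; one could shorten the last step by citing that a first-order self-adjoint elliptic operator whose symbol has both signs on a closed manifold is automatically unbounded above and below, but your self-contained computation is cleaner for the purposes of a lecture-note proof.
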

A good way to remember its properties is to go two dimensions down and think about the standard Dirac operator on the circle $S^1=\mathbb{R}/2\pi\mathbb{Z}$, which is simply
\begin{equation}\label{dirac1}
-i\frac{d}{d\theta}: C^{\infty}(S^1; \mathbb{C})\rightarrow C^{\infty}(S^1; \mathbb{C}).
\end{equation}
In this case we can identify the basis of eigenvectors as the usual Fourier basis $\{e^{in\theta}\}_{n\in\mathbb{Z}}$.
\begin{Exercise}
Consider the flat torus $\mathbb{T}=S^1\times S^1\times S^1$. Consider the trivial spin$^c$ structure with the trivial connection $B_0$. Write down $D_{B_0}$ and compute its spectrum.
\end{Exercise}
\vspace{0.5cm}

\textbf{The gauge group. }The group of automorphisms of the spin$^c$ structure $\spin$, also known as \textit{gauge group}, is given by
\begin{equation*}
\mathcal{G}(Y,\spin)=\{u: Y\rightarrow S^1\}.
\end{equation*}
A gauge transformation $u$ acts on a connection by pull-back and on a spinor by multiplication, in formulas
\begin{equation*}
u\cdot (B,\Psi)=(B-u^{-1}du, u\cdot\Psi).
\end{equation*}
It is straightforward to see that if the spinor $\Psi$ is not zero, then its stabilizer under the gauge group of the configuration $(B,\Psi)$ is trivial. On the other hand the stabilizer of a configuration $(B,0)$ is given by the constant gauge transformations, so it is identified with $S^1$. 
\begin{defn}\label{irrrred}
We call the configurations of the first kind \textit{irreducibles}, while the configurations of the second kind \textit{reducible}.
\end{defn}
\par
If we fix a basepoint $y_0$ in $Y$ and consider the subgroup $\mathcal{G}_0(Y,\spin)$ of gauge transformations which are $1$ at $y_0$, we see that this acts freely on the configuration space $\mathcal{C}(Y,\spin)$. The quotient of this action, the \textit{based moduli space of configurations} $\mathcal{B}_o(Y,\spin)$ is the `manifold' $M$ on which we will perform Morse theory. It carries the residual action of the quotient $\mathcal{G}(Y,\spin)/\mathcal{G}_0(Y,\spin)=S^1$.
\par
As $S^1$ is a $K(\mathbb{Z},1)$, the component group of the gauge group $\mathcal{G}(Y,\spin)$ is naturally identified with $H^1(Y;\mathbb{Z})$. The elements corresponding to the trivial class are exactly those that can be written as $e^\xi$ for some imaginary valued function $\xi$.
\begin{Exercise}\label{cohom}
Show that the cohomology class of a map $u$ is represented by the real one form $1/(2\pi i)u^{-1}du$.
\end{Exercise}
\vspace{0.5cm}

\textbf{The Chern-Simons-Dirac functional.}
Finally, we introduce the $S^1$-invariant function $f$ that will be used to compute Morse homology. For a fixed base connection $B_0$ the Chern-Simons-Dirac functional is defined to be
\begin{equation*}
\mathcal{L}(B,\Psi)=-\frac{1}{8}\int_Y (B^t-B^t_0)\wedge( F_{B^t}+F_{B^t_0})+\frac{1}{2}\int_Y \langle D_B\Psi,\Psi\rangle \mathrm{d}vol.
\end{equation*}
Here $F_{B^t}$ is the curvature of the connection $B^t$, which is an imaginary valued two form. The value of $\mathcal{L}$ is a real number because the Dirac operator $D_B$ is self-adjoint.
\par
The configuration space is an affine space and its tangent space at each point is identified with $\Omega^1(Y;i\mathbb{R})\times \Gamma(S)$, which has a natural $L^2$ inner product induced by the Riemannian metric on $Y$ and the hermitian metric on $S$. One can compute the (formal) gradient of the functional with respect to this metric, which is
\begin{equation}\label{grad}
\mathrm{grad}\mathcal{L}(B,\Psi)=(\frac{1}{2}\ast F_{B^t}+\rho^{-1}(\Psi\Psi^*)_0, D_B\Psi).
\end{equation}
Here $\ast$ denotes the Hodge star and $(\Psi\Psi^*)_0$ is the traceless part of the hermitian endomorphism $\Psi\Psi^*$. In coordinates, if $\Psi$ has components $(\alpha,\beta)$ then
\begin{equation*}
(\Psi\Psi^*)_0=\begin{pmatrix}
\frac{1}{2}(|\alpha|^2-|\beta|^2) & \alpha\bar{\beta}\\
\bar{\alpha}\beta & \frac{1}{2}(|\beta|^2-|\alpha|^2). 
\end{pmatrix}
\end{equation*}
Its inverse image under the Clifford multiplication $\rho$ is an imaginary valued one form.
\\
\par
It is important to remark that the Chern-Simons-Dirac functional is not generally fully gauge invariant. One can check using Exercise \ref{cohom} and the fact that the $-1/(2\pi i)F_{B^t_0}$ represents the first Chern class $c_1(S)$ that
\begin{equation*}
\mathcal{L}(u\cdot(B,\Psi))-\mathcal{L}(B,\Psi)=2\pi^2([u]\cup c_1(S))[Y].
\end{equation*}
We see already the big difference between torsion spin$^c$ structures (for which the functional is fully gauge invariant, hence it descends to the moduli space of configurations) and non-torsion spin$^c$ structures, for which the functional is well defined only as a circle valued function.
\vspace{0.5cm}

\textbf{Seiberg-Witten monopoles. }The equations we will be interested in understanding throughout these notes are the gradient flow equations for the Chern-Simons-Dirac functional
\begin{equation}\label{gradfloweq}
\frac{d}{dt}(B(t),\Psi(t))=-\mathrm{grad}\mathcal{L}(B(t),\Psi(t))
\end{equation}
for a path $(B(t),\Psi(t))$ of configurations in $\mathcal{C}(Y,\spin)$. The critical points of the functional $\mathcal{L}$, or, equivalently, the solutions of the system
\begin{align*}
\frac{1}{2}\ast F_{B^t}+\rho^{-1}(\Psi\Psi^*)_0&=0\\
D_B\Psi&=0
\end{align*}
are called \textit{monopoles}. In light of Exercise \ref{hodge}, the first equation is equivalent to
\begin{equation*}
\frac{1}{2}\rho(F_{B^t})-(\Psi\Psi^*)_0=0.
\end{equation*}
There is a simple description of the reducible monopoles (recall Definition \ref{irrrred}).
\begin{prop}\label{redsol}
If the spin$^c$ structure is not torsion, there are no reducible monopoles. If the spin$^c$ structure is torsion, the reducible monopoles can be identified with
\begin{equation*}
H^1(Y; i\mathbb{R})/2\pi i H^1(Y;\mathbb{Z}),
\end{equation*}
the torus of flat connections up to gauge.
\end{prop}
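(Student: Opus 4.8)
The plan is to observe that reducibility collapses the monopole equations to the flatness of the determinant connection, and then to identify the resulting space of flat connections modulo gauge. Indeed, for a reducible configuration $(B,0)$ the Dirac equation $D_B\Psi = 0$ holds vacuously, while the curvature equation $\tfrac12\ast F_{B^t}+\rho^{-1}(\Psi\Psi^*)_0 = 0$ becomes $\ast F_{B^t}=0$, i.e. $F_{B^t}=0$. So the reducible monopoles are precisely the spin$^c$ connections $B$ whose induced connection $B^t$ on $\det(S)$ is flat, taken up to the gauge group.

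First I would dispose of the non-torsion case and, at the same time, produce a flat reference connection in the torsion case. Since $c_1(\det S)=c_1(\spin)$ and the curvature of any connection on $\det(S)$ represents a fixed nonzero multiple of $c_1(\spin)$ in de Rham cohomology, a flat connection on $\det(S)$ forces $c_1(\spin)=0$ in $H^2(Y;\mathbb{R})$, i.e. $\spin$ torsion; hence there are no reducible monopoles when $\spin$ is non-torsion. Conversely, if $\spin$ is torsion, take any spin$^c$ connection $B_1$: the closed form $F_{B_1^t}$ is cohomologous to a constant multiple of $c_1(\spin)$ and therefore exact, say $F_{B_1^t}=d\omega$, so setting $B_0:=B_1-\tfrac12\,\omega\otimes 1_S$ (using the convention $\tilde B-B=b\otimes 1_S\Rightarrow \tilde B^t-B^t=2b$ from the text) gives $F_{B_0^t}=0$. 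Fixing such a flat $B_0$, a spin$^c$ connection $B=B_0+b\otimes 1_S$ has $F_{B^t}=2\,db$, so it is a reducible monopole exactly when $db=0$. Thus the set of reducible monopoles is the affine space modeled on the closed imaginary one-forms $\ker(d)\subset\Omega^1(Y;i\mathbb{R})$.

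It remains to pass to the quotient by the gauge group, where the bookkeeping lives. A gauge transformation $u:Y\to S^1$ sends $b\mapsto b-u^{-1}du$, so I must identify the subgroup $\mathcal{K}=\{u^{-1}du:u\in\mathcal{G}(Y,\spin)\}\subset\ker(d)$. Each $u^{-1}du$ is closed and has all periods in $2\pi i\mathbb{Z}$ (the period over a loop $\gamma$ is $2\pi i$ times the winding number of $u|_\gamma$), and conversely any closed imaginary one-form $c$ with periods in $2\pi i\mathbb{Z}$ arises this way, via $u(y)=\exp\!\int_{y_0}^{y}c$, the integral being well defined modulo $2\pi i\mathbb{Z}$. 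In particular $\mathcal{K}\supset\mathrm{im}\,d$ (take $u=e^{\xi}$), and under the de Rham isomorphism $\ker(d)/\mathrm{im}\,d\cong H^1(Y;i\mathbb{R})$ the image of $\mathcal{K}$ is exactly $2\pi i$ times the image of $H^1(Y;\mathbb{Z})$ — here I invoke Exercise \ref{cohom}, which identifies $\tfrac{1}{2\pi i}u^{-1}du$ with the integral class $[u]$. Hence the moduli space of reducible monopoles is $\ker(d)/\mathcal{K}\cong H^1(Y;i\mathbb{R})/2\pi i\,H^1(Y;\mathbb{Z})$, the torus of flat connections up to gauge, as asserted. (Constant gauge transformations act trivially on connections, matching the fact that they stabilize any reducible, so it is irrelevant whether one divides by the full or the based gauge group.)

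The main obstacle is this last paragraph: correctly pinning down $\mathcal{K}$ as the closed imaginary one-forms with periods in $2\pi i\mathbb{Z}$, and verifying that the de Rham isomorphism carries $\mathcal{K}$ onto the lattice $2\pi i\,H^1(Y;\mathbb{Z})$ with the right normalization. Once the equations are recognized to reduce to $F_{B^t}=0$, the rest is routine.
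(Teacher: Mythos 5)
Your proof is correct and follows essentially the same route as the paper's: reduce to flatness of $B^t$, observe that flatness forces $c_1$ torsion, build a flat reference connection by correcting an arbitrary one, describe the remaining freedom as closed imaginary one-forms, and quotient by gauge using the identification of $[u]$ with $\tfrac{1}{2\pi i}u^{-1}du$ from Exercise~\ref{cohom}. You only fill in more detail about periods and the lattice $2\pi i\,H^1(Y;\mathbb{Z})$, which the paper delegates to that exercise.
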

\begin{proof}
Recall that for any connection $B$ on $S$ the form $-1/(2\pi i) F_{B^t}$ is a de Rham representative of $c_1(S)$. As $\Psi$ vanishes, the solutions are simply flat connections $B^t$ up to gauge. If the bundle admits a flat connection then its first Chern class is torsion, as $-1/(2\pi i) F_{B^t}$ is identically zero. Suppose now $c_1(S)$ is torsion. Fix a base connection $B_0$. The curvature of $B_0^t+2b$ is $F_{B_0^t}+2db$ and as $F_{B_0^t}$ is exact we can find a $b$ so that the quantity vanishes. All other flat connections are obtained by adding closed forms. Now from Exercise \ref{cohom} gauge transformations act by the addition of closed forms whose de Rham class is in $2\pi i H^1(Y;\mathbb{Z})$, so the result follows.
\end{proof}

Irreducible solutions are much harder to describe and are strongly dependent of the underlying geometry and topology. We have for example the following result.
\begin{prop}\label{weitz}
Suppose the scalar curvature of $Y$ is non negative at each point. Then there are no irreducible solutions.
\end{prop}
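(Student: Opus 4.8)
The plan is to run the standard Weitzenb\"ock (Bochner) argument. Suppose for contradiction that $(B,\Psi)$ is an irreducible monopole, so $\Psi$ is not identically zero and the two equations $D_B\Psi=0$ and $\frac{1}{2}\rho(F_{B^t})=(\Psi\Psi^*)_0$ hold. The Dirac operator on a three manifold satisfies the Weitzenb\"ock formula
\begin{equation*}
D_B^2\Psi=\nabla_B^*\nabla_B\Psi+\frac{s}{4}\Psi+\frac{1}{2}\rho(F_{B^t})\Psi,
\end{equation*}
where $s$ denotes the scalar curvature of $Y$. I would recall its derivation (or simply cite it, e.g. from \cite{Mor} or the relevant chapter of \cite{KM}) from the general Lichnerowicz formula, being careful about the factor of $1/2$ that comes from passing to the determinant connection $B^t$, so that the curvature term matches exactly the normalization of $\rho(F_{B^t})$ used in the monopole equation of this section.

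First I would substitute the two monopole equations into the Weitzenb\"ock formula: since $D_B\Psi=0$ the left hand side vanishes, and replacing $\frac{1}{2}\rho(F_{B^t})$ by $(\Psi\Psi^*)_0$ yields the pointwise identity
\begin{equation*}
0=\nabla_B^*\nabla_B\Psi+\frac{s}{4}\Psi+(\Psi\Psi^*)_0\Psi.
\end{equation*}
Next I would take the $L^2$ inner product with $\Psi$ over $Y$ and integrate by parts in the first term, obtaining
\begin{equation*}
0=\|\nabla_B\Psi\|_{L^2}^2+\int_Y\frac{s}{4}|\Psi|^2\,\mathrm{d}vol+\int_Y\langle(\Psi\Psi^*)_0\Psi,\Psi\rangle\,\mathrm{d}vol.
\end{equation*}
Here I use the elementary linear-algebra fact that $(\Psi\Psi^*)_0\Psi=\frac{1}{2}|\Psi|^2\Psi$ --- which follows since $\Psi\Psi^*\Psi=|\Psi|^2\Psi$ and $\mathrm{tr}(\Psi\Psi^*)=|\Psi|^2$ --- so the last integrand equals $\frac{1}{2}|\Psi|^4\geq 0$.

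Now comes the conclusion. Under the hypothesis $s\geq 0$ all three summands on the right hand side are non-negative, hence each of them vanishes; in particular $\int_Y|\Psi|^4\,\mathrm{d}vol=0$, which forces $\Psi\equiv 0$ and contradicts irreducibility. (As a byproduct one also reads off $\nabla_B\Psi=0$, though this is not needed here.) The only genuinely delicate point is pinning down the Weitzenb\"ock identity with the correct sign and normalization conventions, matching the $\rho(F_{B^t})$ appearing in the monopole equations above; everything after that is a one-line positivity argument, so I would concentrate the care precisely there.
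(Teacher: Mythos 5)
Your argument is correct and is essentially the same Bochner/Weitzenb\"ock argument the paper uses: substitute the monopole equations into the Lichnerowicz formula, pair with $\Psi$ in $L^2$, and invoke positivity of each term. You additionally make explicit the pointwise identity $(\Psi\Psi^*)_0\Psi=\tfrac{1}{2}|\Psi|^2\Psi$, which the paper leaves to the adjacent exercise (and your $\tfrac{1}{2}\int_Y|\Psi|^4$ is the corrected form of the $\tfrac12\|\Psi\|_{L^4}$ term in the paper, which there is missing its exponent).
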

\begin{proof}
Recall the Weitzenb\"ock formula
\begin{equation*}
D_B^2 \Psi= \nabla^*_{B}\nabla_B \Psi + \frac{1}{2}\rho(F_{B_t})\Psi +\frac{1}{4} s\Psi,
\end{equation*}
where $\nabla^*_B$ is the formal adjoint of $\nabla_B$ and $s$ is the scalar curvature. If $(B,\Psi)$ is a solution, we obtain by substituting the identity
\begin{equation*}
\nabla^*_{B}\nabla_B \Psi + (\Psi\Psi^*)_0\Psi +\frac{1}{4} s\Psi=0.
\end{equation*}
If we take the $L^2$ inner product with $\Psi$ we obtain
\begin{equation*}
\| \nabla_B \Psi \|^2+\frac{1}{2}\|\Psi\|_{L^4}+\langle s\Psi, \Psi\rangle_{L^2}=0.
\end{equation*}
As the scalar curvature is non negative, the last term is non negative, which implies that the spinor vanishes. So there cannot be irreducible solutions.
\end{proof}
\begin{Exercise}
Check that $\|(\Psi\Psi^*)_0\|_{L^2}$ is $\|\Psi\|_{L^4}$.
\end{Exercise}

The Weitzenb\"ock formula is the key feature that makes Seiberg-Witten theory more tractable that the instanton counterpart. The arguments as the one mentioned above are usually referred to as the \textit{Bochner technique}. Given a connection $B$ we can write two Laplacians, $D_B^2$ and $\nabla_B^*\nabla_B$, which have by construction the same second order part. The point is that they also have the same first order part, so they only differ by some pointwise defined operator.
Furthermore, this can be identified in terms of relevant geometric data. Notice that the sign in the first of the equations has a fundamental role. Similar ideas are behind the proof of the following key result.
\begin{prop}\label{compactness}
The space of critical points of $\mathrm{grad}\mathcal{L}$ up to gauge is compact.
\end{prop}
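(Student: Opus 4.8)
The plan is to argue in two stages: first establish a universal pointwise $C^0$ bound on the spinor, then bootstrap this to $C^\infty$ bounds using elliptic regularity and the gauge-fixing slice. This is the three-dimensional analogue of the standard compactness argument in four-dimensional Seiberg-Witten theory, and the Weitzenb\"ock formula is again the crucial input. The starting point is exactly the computation in the proof of Proposition \ref{weitz}: if $(B,\Psi)$ is a monopole, substituting the curvature equation $\frac{1}{2}\rho(F_{B^t}) = (\Psi\Psi^*)_0$ into the Weitzenb\"ock formula for $D_B^2\Psi = 0$ yields
\begin{equation*}
\nabla_B^*\nabla_B \Psi + (\Psi\Psi^*)_0\Psi + \tfrac{1}{4}s\Psi = 0.
\end{equation*}
Rather than pairing with $\Psi$ in $L^2$, the plan is to apply a pointwise maximum-principle argument: at a point where $|\Psi|^2$ attains its maximum, $\nabla_B^*\nabla_B$ applied to $|\Psi|^2$ is non-negative in an appropriate sense (Kato's inequality), while the algebraic term $\langle (\Psi\Psi^*)_0\Psi,\Psi\rangle = \frac{1}{2}|\Psi|^4$ forces $\frac{1}{2}|\Psi|^4 \le -\frac{1}{4}s|\Psi|^2$, hence $|\Psi|^2 \le \tfrac{1}{2}\max_Y(-s) =: C$, a constant depending only on the metric on $Y$. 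This is the first key step.

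Next I would obtain the connection bound. With $\|\Psi\|_{C^0}$ controlled, the curvature equation immediately gives a pointwise bound on $F_{B^t}$, i.e.\ a $C^0$ bound on the curvature of the induced connection on $\det(S)$. To promote this to a bound on the connection itself (modulo gauge), I would fix a reference connection $B_0$ and use the Coulomb gauge-fixing slice: any $B$ is gauge-equivalent to one with $d^*(B^t - B_0^t) = 0$, and for such a representative the elliptic estimate for the operator $d \oplus d^*$ on $\Omega^1(Y;i\mathbb{R})$ controls $\|B^t - B_0^t\|_{L^2_1}$ in terms of $\|F_{B^t} - F_{B_0^t}\|_{L^2}$ plus a term involving the harmonic part, which lives in the compact torus $H^1(Y;i\mathbb{R})/2\pi i H^1(Y;\mathbb{Z})$. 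Together with the $C^0$ bound on $F_{B^t}$ this gives a uniform $L^2_1$ bound on the gauge representative of $B$.

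Finally I would run the bootstrapping. The monopole equations, in the Coulomb slice, become an elliptic system for $(B,\Psi)$ with the nonlinearity quadratic in $\Psi$ and bilinear in $(B,\Psi)$; starting from the $L^2_1 \times C^0$ bounds, repeated application of elliptic regularity (using the Sobolev multiplication theorems in dimension three, where $L^2_1$ is not quite an algebra but $L^2_1 \cdot L^2_2 \subset L^2_1$ and so on) yields uniform $C^k$ bounds for every $k$. By Arzel\`a--Ascoli any sequence of monopoles then has a subsequence converging in $C^\infty$, after applying gauge transformations, and the limit is again a monopole; hence the space of critical points modulo gauge is sequentially compact, and being a subset of the (metrizable) configuration space quotient, compact. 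The main obstacle I expect is bookkeeping the gauge-fixing carefully: one must check that the Coulomb slice can be chosen continuously along a sequence and that the residual torus of reducibles does not cause the gauge transformations to escape to infinity --- this is handled by the compactness of $H^1(Y;i\mathbb{R})/2\pi i H^1(Y;\mathbb{Z})$ noted in Proposition \ref{redsol}, but it is the step that requires genuine care rather than routine estimation.
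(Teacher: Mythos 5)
Your proof is correct and follows the standard approach that the paper only alludes to via ``similar ideas are behind the proof'' (deferring the details to Kronheimer--Mrowka): the Weitzenb\"ock-formula maximum-principle bound $\|\Psi\|_{C^0}^2 \le \max(0,-\tfrac12 \min_Y s)$, Coulomb gauge fixing against a reference connection with the harmonic part controlled by compactness of $H^1(Y;i\mathbb{R})/2\pi i H^1(Y;\mathbb{Z})$, and an elliptic bootstrap. One small correction in phrasing: the operator you apply to $|\Psi|^2$ at its maximum is the scalar Laplacian $\Delta$, not $\nabla_B^*\nabla_B$ (which acts on spinors), and the relevant pointwise identity is $\tfrac12\Delta|\Psi|^2 = \mathrm{Re}\langle\nabla_B^*\nabla_B\Psi,\Psi\rangle - |\nabla_B\Psi|^2$ combined with $\Delta|\Psi|^2\ge 0$ at an interior maximum --- this is not Kato's inequality, which is the separate estimate $|\nabla|\Psi||\le|\nabla_B\Psi|$ and is not what drives this step.
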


\vspace{1cm}
\section{Blowing-up and Morse theory with boundary}
Our goal is to apply the ideas of finite dimensional Morse theory to the Chern-Simons-Dirac functional $\mathcal{L}$ in order to define homology groups which are invariants of the three manifold we started with. In finite dimension, one proceeds by perturbing the function $f$ on $M$ to a Morse-Smale one, and define a chain complex which generated by the critical points and for which the differential counts isolated trajectories.
\par
There is a main complication in our setting, namely the fact that the function $f:M\rightarrow \mathbb{R}$ is invariant under an $S^1$-action which has non empty fixed point set $M^{S^1}$. In particular the quotient by the $S^1$ action is not necessarily a smooth manifold. If we want to preserve this symmetry in our perturbations (so that we do not lose the extra information), we have to take a different approach.
\par
The most naive one is to restrict our attention to the complement of the fixed point set $M^*=M\setminus M^{S^1}$ on which the action is free. This allows us to consider the usual Morse homology of the quotient $M^*/S^1$. This approach (which was taken at the early stages of the development of the theory) has two main drawbacks:
\begin{itemize}
\item As we will see, a lot of information is lost by throwing the reducibles away.
\item More importantly, the resulting groups are \textit{not} invariants of the three manifold, i.e. they depend on the particular choice of metric and perturbation. This is clear from the following simple finite dimensional model. Take $\mathbb{C}$ with the $S^1$ action given by complex multiplication, whose only fixed point is the origin. Consider for $c\in\mathbb{R}$ the Morse function
\begin{equation*}
f(z)=c\|z\|^2+\|z\|^4.
\end{equation*}
For positive $c$ the origin is the only critical point, while for negative $c$ we also have the circle of critical points $\|z\|=\sqrt{-c/2}$. In particular if $c$ is positive there are no critical points in $\mathbb{C}^*/S^1$, while if $c$ is negative there is exactly one. So the homologies are not isomorphic. 
\end{itemize}
\vspace{0.5cm}
\textbf{Blowing up. }The way these issues are solved in Kronheimer and Mrowka's approach is by blowing up the manifold $M$. In the simplest case of $\mathbb{C}$ described above this is nothing but passing to polar coordinates. Indeed, we can identify
\begin{equation*}
\mathbb{C}\equiv\mathbb{R}^{\geq0}\times S^1/(\{0\}\times S^1)
\end{equation*}
with coordinates $(r,\phi)$. The blow up of $\mathbb{C}$ is then defined to be
\begin{equation*}
\mathbb{C}^{\sigma}=\mathbb{R}^{\geq0}\times S^1
\end{equation*}
which comes with the obvious blow down map
\begin{equation*}
\pi:\mathbb{C}^{\sigma}\rightarrow \mathbb{C}.
\end{equation*}
This map is a diffeomorphism on the locus where $r>0$. In analogy with Definition \ref{irrrred}, we call this the irreducible locus. The $S^1$-action extends naturally to $\mathbb{C}^{\sigma}$ and the quotient $\mathbb{C}^{\sigma}/S^1$ is identified with $\mathbb{R}^{\geq0}$. Notice that this is a manifold with boundary.
\par
Similarly, we can do the same with the action of $S^1$ on $\mathbb{C}^n$ by complex multiplication. In this case $(\mathbb{C}^n)^{\sigma}/S^1$ is $\mathbb{R}^{\geq0}\times \mathbb{C}P^{n-1}$. More generally, suppose that $S^1$ acts on a Riemannian manifold $M$ by isometries so that:
\begin{itemize}
\item the stabilizer of each point is either $\{1\}$ or the whole $S^1$; 
\item the fixed point set is a smooth manifold $P$.
\end{itemize}
At a point $p$ in $P$ the normal fiber $N_p$ is naturally a complex vector space with an $S^1$-action as in the model discussed above. We can then construct the blow up $M^\sigma$ by replacing it with $N_p^{\sigma}$ at each point. Again this has a natural free $S^1$-action and the quotient $M^\sigma/S^1$ is a smooth manifold with boundary.

\begin{figure}
  \centering
\def\svgwidth{\textwidth}
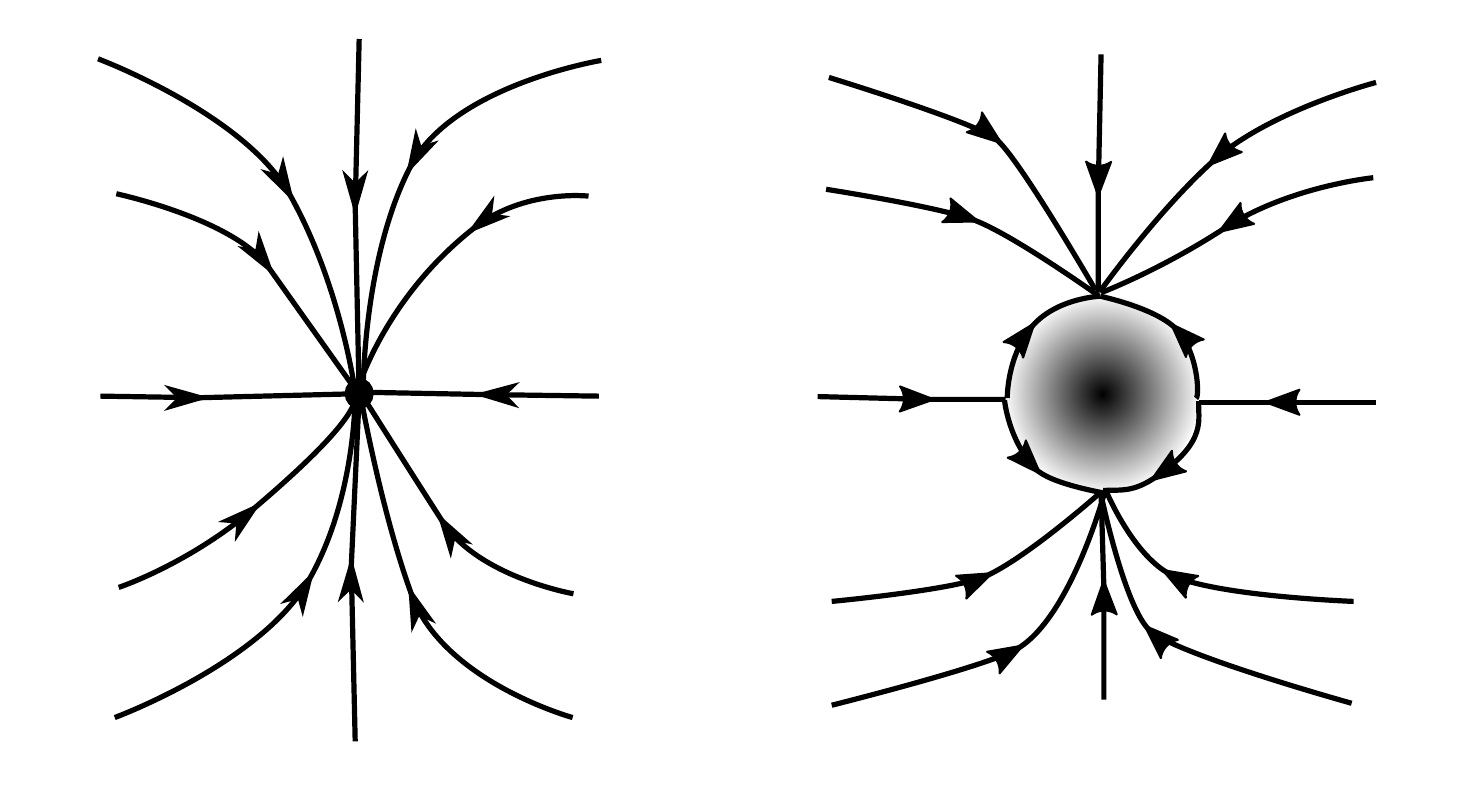
    \caption{A linear flow in $\mathbb{R}^2$ and the corresponding vector field induced in the blow up. Notice that this picture does not have the $S^1$-symmetry.}
    \label{blowgrad}
\end{figure} 

The next key observation is that if we are given a smooth $S^1$-invariant function $f:M\rightarrow\mathbb{R}$ then the pull-back of vector field $\mathrm{grad}f$ on $M^*$ naturally extends to a smooth vector field $(\mathrm{grad}f)^{\sigma}$ on $M^{\sigma}$ (see Figure \ref{blowgrad}). This is the vector field we will use to define the Morse homology groups. As our blow-up construction is performed fiberwise, the following example clarifies well what is going on. Consider a self-adjoint linear map $L$ on $\mathbb{C}^n$. Consider the $S^1$-invariant function on $\mathbb{C}^n$ given by
\begin{equation}\label{fuct}
f(z)=\frac{1}{2}\langle z, Lz\rangle
\end{equation}
whose gradient is $\mathrm{grad}f(z)=Lz$. On $(\mathbb{C}^n)^*$ this can be written in polar coordinates $(r,\phi)$ (or, equivalently, in the blow up $\mathbb{R}^{>0}\times S^{n-1}$) as
\begin{equation}\label{blowform}
\mathrm{grad}f(r,\phi)=(\Lambda(\phi)r,L\phi-\Lambda(\phi)\phi)
\end{equation} 
where $\Lambda(\phi)$ is the quadratic function $\langle \phi,L\phi\rangle$. The second component is a vector tangent to $S^{n-1}$ at $\phi$, and is sent to the part of $Lz$ tangent to the sphere of radius $r$ via the blow down map. The formula (\ref{blowform}) above makes sense on the whole $(\mathbb{C}^n)^{\sigma}$ and provides the required smooth extension.
\begin{Exercise}
Verify the formula in equation (\ref{blowform}).
\end{Exercise}
\begin{rem}
It is important to notice that $(\mathrm{grad}f)^{\sigma}$ is \textit{not} the gradient of the function on $M^\sigma$ induced by $f$ for any natural choice of the metric.
\end{rem}  

\vspace{0.5cm}
\textbf{Morse homology with boundary. }There are a few complications to deal with in our approach. These are well described in Figure $3$.
\begin{itemize}
\item The vector field $(\mathrm{grad}f)^{\sigma}$ is tangent to the boundary. In particular there are two kinds of critical points in the boundary: the outward normal direction can be stable or unstable. We call these critical points respectively \textit{stable} and \textit{unstable}. We call the critical points in the interior \textit{irreducible}.
\item The flow is not necessarily Morse-Smale. This is because the unstable manifold of a stable critical point and the stable manifold of an unstable critical point are always contained in the boundary. In particular if they have non trivial intersection, this cannot be transverse. We call such a pair \textit{boundary obstructed}.
\item A consequence of the facts above is that a sequence of trajectories in a one dimensional moduli space can break into three components.
\item The trajectories between an unstable and a stable critical points form a manifold with boundary, the latter consisting of the trajectories lying inside the boundary.
\item Finally, we are not dealing with a gradient flow.
\end{itemize}

\begin{figure}
  \centering
\def\svgwidth{0.5\textwidth}
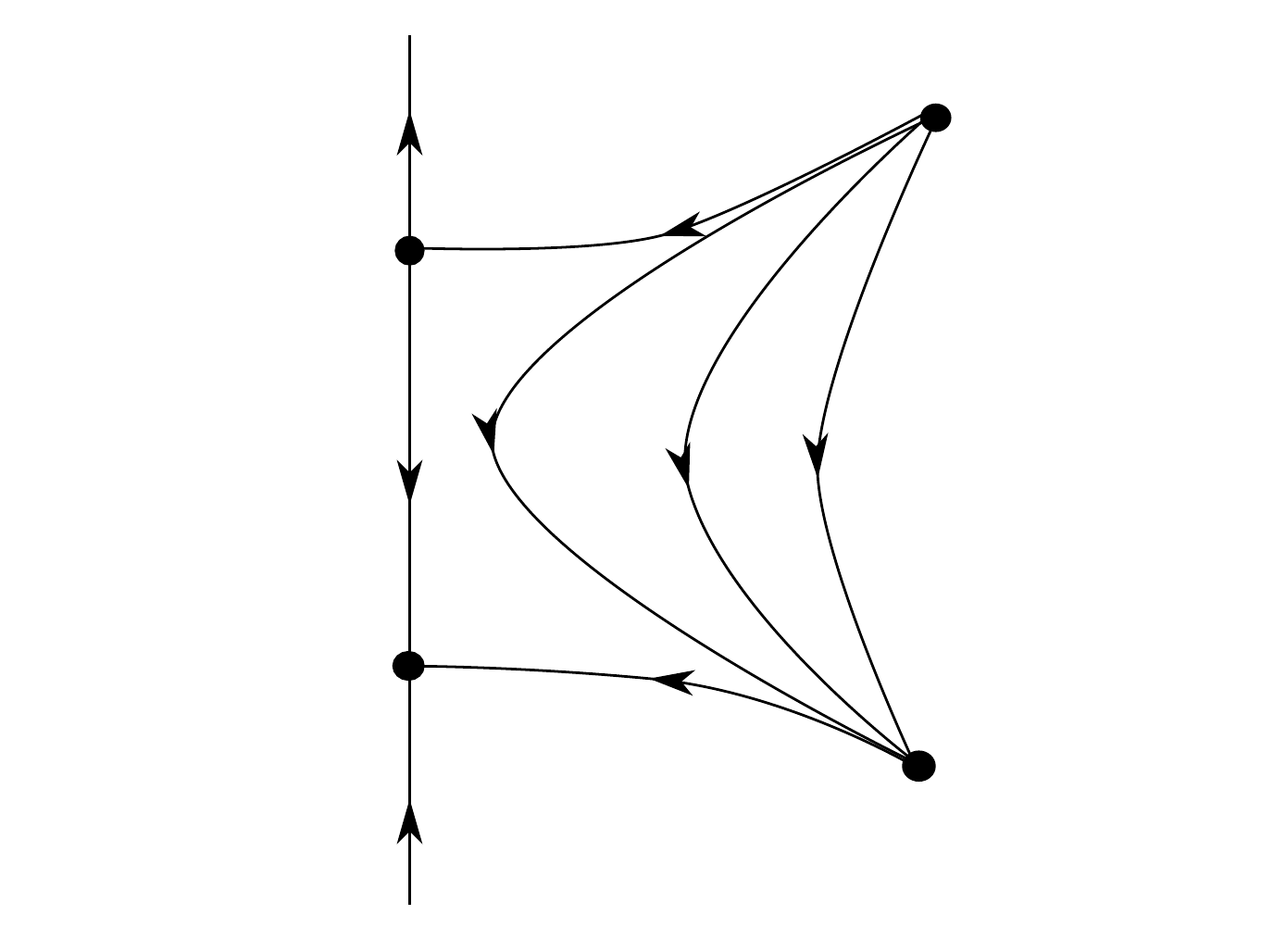
    \caption{In the picture, there is a trajectory in the boundary between critical points of the same index. Correspondingly, there is a one dimensional family of trajectories of Morse trajectories limiting to a broken trajectory with three components.}
    \label{notsmale}
\end{figure} 

The following exercise has fundamental importance for the rest of the notes. We will come back to more aspects of this example later.

\begin{Exercise}\label{crit}
Suppose that the self-adjoint operator $L$ defining the function (\ref{fuct}) has simple spectrum (i.e. one dimensional eignspaces) and is invertible. Show that:
\begin{itemize}
\item the critical points of the flow on $(\mathbb{C}^n)^{\sigma}/S^1$ correspond to the eigenvalues of $L$;
\item a critical point is stable if and only if it corresponds to a positive eigenvalue.
\end{itemize}
\end{Exercise}

Even though there are many complications, all of them can be overcome. We now  introduce a suitable notion of transversality.
\begin{defn}\label{regularity}
We say that an $S^1$-invariant function $f$ on $M$ is \textit{regular} if the critical points of $(\mathrm{grad}f)^{\sigma}$ on $M^{\sigma}/S^1$ are Morse and
\begin{itemize}
\item for each non boundary obstructed pair $(a,b)$ of critical points, the unstable manifold of $a$ and the stable manifold of $b$ intersect transversely;
\item for each boundary obstructed pair $(a,b)$ of critical points, the unstable manifold of $a$ and the stable manifold of $b$ intersect transversely \textit{within the boundary};
\end{itemize}
In particular, the flow restricted to the boundary is Morse-Smale.
\end{defn}

\begin{Exercise}
Compute the dimension of the space of trajectories $M(a,b)$ between $a$ and $b$ in terms of their index and type (stable, unstable, irreducible). Does some combination of types imply immediately that these spaces are empty?
\end{Exercise}

Notice that we can adapt these definitions to any Riemannian manifold with boundary $B$ and vector field $v$ obtained as follows (see Figure \ref{disc} for an explicit example). The manifold $B$ is obtained as the quotient of a Riemannian manifold $\tilde{B}$ by an isometric involution $\iota$ whose fixed point set is a codimension one smooth manifold. The vector field $v$ is the restriction of the gradient of an $\iota$-invariant function. We will refer to this model from now on, but all the discussion will also apply to the original case.

\begin{figure}
  \centering
\def\svgwidth{0.3\textwidth}
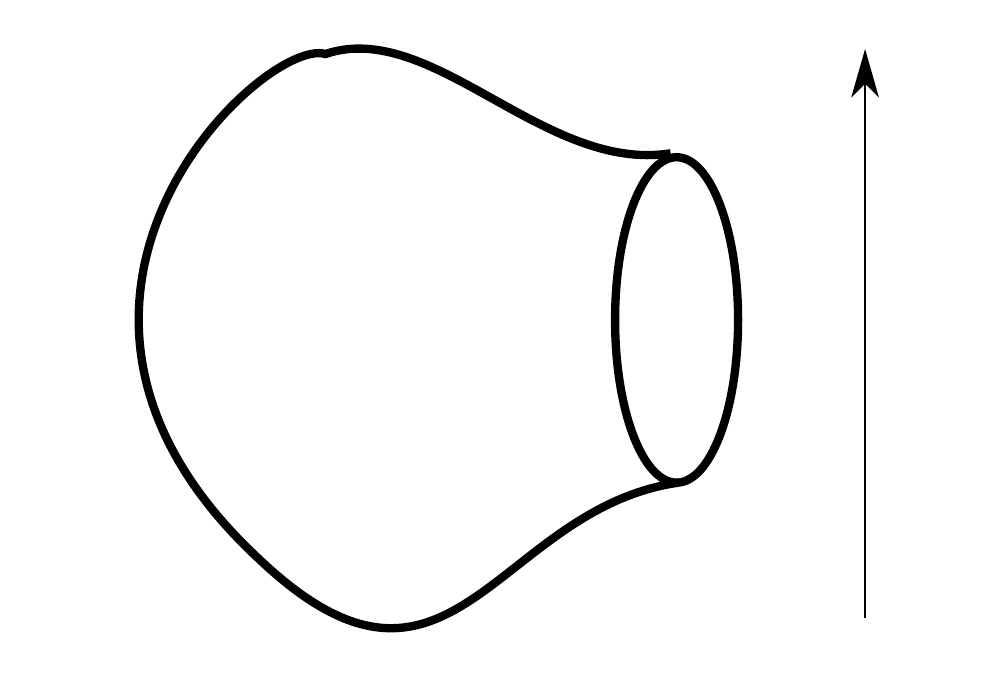
    \caption{The gradient of the height function on this half sphere has four critical points and is tangent to the boundary.}
    \label{disc}
    \end{figure} 

Under the transversality assumptions of Definition \ref{regularity} (which can be achieved generically), we define the vector spaces $C^o_k$, $C^s_k$ and $C^u_k$ generated respectively by the irreducible, stable and unstable critical points of index $k$. We will drop the index to mean the direct sum of all of them. We can define the linear operator
\begin{equation*}
\partial^o_o: C^o_k\rightarrow C^o_{k-1}
\end{equation*}
obtained by counting trajectories between irreducible critical points
\begin{equation*}
\partial^o_o a=\sum_{\substack{b\in C^o  \\ \mathrm{ind}(a)-\mathrm{ind}(b)=1}} \#(\check{M}(a,b))\cdot b.
\end{equation*}
Here $\check{M}(a,b)$ denotes the finite set of \textit{unparametrized} trajectories from $a$ to $b$. Notice that these all lie in the interior because the limit points are irreducible. Similarly by counting isolated interior trajectories we can define the three operators $\partial^o_s,\partial^u_o$ and $\partial^u_s$, all of which drop the index by one. Here the apex indicates the domain and the index the codomain. All these trajectories are contained in the interior.
\par
Similarly we can define the operators $\bar{\partial}^s_s,\bar{\partial}^u_u,\bar{\partial}^u_s$ and $\bar{\partial}^s_u$ by counting isolated trajectories contained entirely in the boundary. The first two operators drop the index by one, the third by two while the last one (which corresponds to the boundary obstructed case) leaves the index unchanged. Notice that the maps $\partial^u_s$ and $\bar{\partial}^u_s$ count different trajectories (and indeed shift the degree differently).
\\
\par
Define now the graded vector spaces $\check{C}_*$, $\hat{C}_*$ and $\bar{C}_*$ whose graded parts are
\begin{align}\label{chain}
\begin{split}
\check{C}_k&=C^o_k\oplus C^s_k\\
\hat{C}_k&=C^o_k\oplus C^u_k\\
\bar{C}_k&=C^s_k\oplus C^u_{k+1}.
\end{split}
\end{align}
We define the differentials on them given in components as
\begin{equation}\label{diff}
\check{\partial}=\begin{bmatrix}
\partial^o_o & \partial^u_o\bar{\partial}^s_u\\
\partial^o_s & \bar{\partial}^s_s+\partial^u_s\bar{\partial}^s_u
\end{bmatrix}
\qquad
\hat{\partial}=\begin{bmatrix}
\partial^o_o & \partial^u_o\\
\bar{\partial}^s_u\partial^o_s & \bar{\partial}^u_u+\bar{\partial}^s_u\partial^u_s
\end{bmatrix}
\qquad
\bar{\partial}=\begin{bmatrix}
\bar{\partial}^s_s& \bar{\partial}^u_s\\
\bar{\partial}^s_u&\bar{\partial}^u_u
\end{bmatrix}
\end{equation}
It is easy to check that these maps have degree $-1$, and we have in fact the following.
\begin{lem}
The three pairs $(\check{C}_*,\check{\partial}), (\hat{C}_*,\hat{\partial})$ and $(\bar{C}_*,\bar{\partial})$ are chain complexes.
\end{lem}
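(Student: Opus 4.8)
The plan is to deduce all three identities $\check{\partial}^2=0$, $\hat{\partial}^2=0$, $\bar{\partial}^2=0$ from a single collection of ``structural'' relations among the eight operators $\partial^o_o,\partial^o_s,\partial^u_o,\partial^u_s$ and $\bar{\partial}^s_s,\bar{\partial}^u_u,\bar{\partial}^u_s,\bar{\partial}^s_u$, and then to observe that expanding the matrix products coming from (\ref{diff}) reproduces these relations entry by entry. That $\check{\partial}$, $\hat{\partial}$, $\bar{\partial}$ have degree $-1$ is immediate from the degree conventions already recorded ($\bar{\partial}^s_u$ preserves degree, $\bar{\partial}^u_s$ lowers it by $2$, and every other operator lowers it by $1$) together with the index shifts in (\ref{chain}).

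The structural relations come in two families. The first consists of the four identities equivalent to $\bar{\partial}^2=0$. Here the essential observation is that, after the reindexing by type built into (\ref{chain}) --- a boundary-stable critical point of ambient index $k$ has index $k$ for the restricted flow, while a boundary-unstable one of ambient index $k+1$ also has index $k$ --- the complex $(\bar{C}_*,\bar{\partial})$ is precisely the Morse-Witten complex of the vector field restricted to the \emph{closed} manifold $\partial(M^{\sigma}/S^1)$. Since that restricted flow is Morse-Smale by Definition \ref{regularity}, $\bar{\partial}^2=0$ is an instance of ordinary finite-dimensional Morse theory: one counts the endpoints of the compact $1$-manifolds of trajectories inside $\partial(M^{\sigma}/S^1)$, whose boundary consists exactly of once-broken trajectories, and uses that a compact $1$-manifold has an even number of boundary points. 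No three-component breaking enters here, since boundary-obstructedness is a phenomenon relative to the ambient manifold and is invisible once one works entirely inside $\partial(M^{\sigma}/S^1)$.

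The second family consists of the relations mixing the interior operators $\partial^\bullet_\bullet$ with the boundary operators (principally $\bar{\partial}^s_u$, but also $\bar{\partial}^s_s$ and $\bar{\partial}^u_s$), such as $\partial^o_o\partial^o_o+\partial^u_o\bar{\partial}^s_u\partial^o_s=0$ and $\partial^o_s\partial^o_o+\bar{\partial}^s_s\partial^o_s+\partial^u_s\bar{\partial}^s_u\partial^o_s=0$. Each is obtained by enumerating the boundary points of a suitable $1$-dimensional moduli space $\check{M}(a,c)$ of unparametrized trajectories between two critical points of the ambient flow. Invoking the compactness result behind Proposition \ref{compactness} together with the breaking behavior described near Figure \ref{notsmale}, such a space is a compact $1$-manifold whose boundary points are of three kinds: genuine two-component broken trajectories $a\to b\to c$ through an intermediate ambient critical point $b$; \emph{three-component} broken trajectories $a\to b\to b'\to c$ whose middle piece is a boundary-obstructed trajectory from a boundary-stable $b$ to a boundary-unstable $b'$ (the one counted by $\bar{\partial}^s_u$); and, when $a$ is boundary-unstable and $c$ is boundary-stable, the genuine ``finite'' boundary consisting of the trajectories lying inside $\partial(M^{\sigma}/S^1)$. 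That these exhaust the possible ends uses the invariance of both the interior $M^*/S^1$ and the boundary under the flow: it forces the moduli spaces of trajectories from an interior critical point to a boundary-unstable one, and from a boundary-stable one to an interior critical point, to be empty --- which is also why no operators $\partial^o_u$ or $\partial^s_o$ were defined. Reading off the contribution of each boundary point and setting the total to zero in $\ztwo$ yields one such relation for every admissible choice of the types of $a$ and $c$.

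It then remains to assemble the pieces: substituting the structural relations into the squared matrices $\check{\partial}^2$, $\hat{\partial}^2$, $\bar{\partial}^2$ obtained from (\ref{diff}), one checks that each matrix entry is a sum of relations already established, hence vanishes. The only genuinely non-formal ingredient --- and the step I expect to be the main obstacle --- is the moduli-space input feeding the second family: establishing that the compactified $1$-dimensional trajectory spaces are $1$-manifolds with boundary whose ends are exactly the two-component configurations, the three-component configurations with a boundary-obstructed middle piece, and the in-boundary trajectories of unstable-to-stable moduli spaces. This is the gluing theory near boundary-obstructed trajectories; in these notes it is the point at which one defers to \cite{KM}, everything downstream being bookkeeping with the matrices.
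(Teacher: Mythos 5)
Your proposal is correct and takes essentially the same route as the paper's sketch: derive the needed structural relations among the eight operators by enumerating the boundary strata of compactified one-dimensional trajectory spaces (including the three-component breakings through a boundary-obstructed $\bar{\partial}^s_u$ piece and the genuine in-boundary ends of unstable-to-stable moduli spaces), then verify that squaring the matrices in (\ref{diff}) is an entry-by-entry consequence, deferring the gluing foundations to Chapter 22 of \cite{KM}. The paper's proof simply records the sample identity (\ref{osuo}) and cites \cite{KM} for the rest, so your write-up is a more explicit version of the same argument.
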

\begin{proof}
As in usual Morse theory the fact that the differential squares to zero follows from identifying the boundary strata of the compactified one dimensional spaces of trajectories. In our case there are several cases to be considered. For example, by considering the trajectories connecting two irreducible critical points whose indices differ by two we obtain the identity
\begin{equation}\label{osuo}
\partial^o_o\partial^o_o+ \partial^u_o\bar{\partial}^s_u\partial^o_s.
\end{equation}
The first term corresponds to the standard interior broken trajectories, while the second counts triple broken trajectories as in Figure $3$. Details can be found in Chapter $22$ of \cite{KM}, but it is a good exercise to work out.
\end{proof}

The homology groups of these chain complexes can be identified in terms of the usual homology groups of the manifold with boundary $B$. 

\begin{prop}
The homologies $H_*(\check{C},\check{\partial}), H_*(\hat{C},\hat{\partial})$ and $H_*(\bar{C},\bar{\partial})$ are independent of the choice of Morse function and metric and are isomorphic to $H_*(B), H_*(B,\partial B)$ and $H_*(\partial B)$.
\end{prop}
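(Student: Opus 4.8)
The plan is to realize each of the three complexes, after a small perturbation supported near $\partial B$, as an honest Morse--Witten complex to which the classical finite-dimensional theory applies, and then read off the homology. I would start with $\bar C$, which is the easiest: by Definition \ref{regularity} the restriction of $v$ to $\partial B$ is a Morse--Smale (gradient-like) flow, whose critical points of boundary-index $k$ are exactly the stable critical points of ambient index $k$ together with the unstable ones of ambient index $k+1$, and the operators $\bar\partial^s_s,\bar\partial^u_u,\bar\partial^u_s,\bar\partial^s_u$ together count \emph{all} the trajectories of $v|_{\partial B}$. Hence $(\bar C_*,\bar\partial)$ \emph{is} the Morse--Witten complex of $v|_{\partial B}$ with its usual grading, so $H_*(\bar C,\bar\partial)\cong H_*(\partial B)$, with independence of metric and Morse function being the standard a priori invariance via continuation maps.

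For $\check C$ I would perturb $v$ inside a collar $\partial B\times[0,\epsilon)$ to a vector field $v_+$ equal to $v$ away from the collar and everywhere transverse to $\partial B$, pointing strictly \emph{into} $B$; for $\hat C$ I would use instead a $v_-$ pointing strictly \emph{out} of $B$. A local computation in the collar shows that, under $v_+$, the boundary critical points of unstable type are pushed out of $B$ and disappear, those of stable type move a distance $O(\delta)$ into the interior keeping their index, and the interior critical points are unchanged; thus the critical points of $v_+$ correspond, with their gradings, exactly to the generators $C^o_*\oplus C^s_*=\check C_*$. Since $v_+$ points inward along $\partial B$, the classical Morse theory of manifolds with boundary shows that the Morse--Witten complex of $v_+$ computes $H_*(B)$ (whereas a flow transverse and outward-pointing along $\partial B$ computes $H_*(B,\partial B)$). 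Symmetrically, under $v_-$ it is the unstable critical points (of ambient index $k$) that survive and the stable ones that disappear, so the critical points correspond to $C^o_*\oplus C^u_*=\hat C_*$, and since $v_-$ points outward this complex computes $H_*(B,\partial B)$.

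It remains to match differentials: under these bijections the Morse differential of $v_+$ must equal $\check\partial$, and that of $v_-$ must equal $\hat\partial$. This is the crux. The point is to analyse the $v_+$-trajectory moduli spaces in the limit $\delta\to0$: a compactness-and-gluing argument shows a one-parameter family of $v_+$-trajectories between surviving critical points converges to a broken $v$-trajectory, and --- because the unstable manifold of a stable critical point and the stable manifold of an unstable critical point lie in $\partial B$ --- the only limiting configurations are the ordinary interior broken trajectories (contributing $\partial^o_o$ and $\partial^o_s$) and the configurations that travel along $\partial B$ from a stable to an unstable critical point and then jump off into the interior (contributing, after counting with multiplicity, the composites $\partial^u_o\bar\partial^s_u$ and $\partial^u_s\bar\partial^s_u$). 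One checks these mod $2$ counts agree entry by entry with the matrix for $\check\partial$, and dually for $\hat\partial$ with the terms $\bar\partial^s_u\partial^o_s$ and $\bar\partial^s_u\partial^u_s$; in particular one has to get right the boundary-obstructed gluing responsible for the $\bar\partial^s_u$ factors. Granting this, $H_*(\check C,\check\partial)\cong H_*(B)$ and $H_*(\hat C,\hat\partial)\cong H_*(B,\partial B)$, and independence of the auxiliary data is then automatic since the right-hand sides are topological invariants of $B$. I expect this last degeneration-and-gluing step --- keeping track of the trajectories near the disappearing boundary critical points --- to be the main obstacle; it is the analysis of Chapter $22$ of \cite{KM}.

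Alternatively, one can avoid doing the gluing analysis twice: the relations among the operators coming from the codimension-one strata of the compactified one-dimensional moduli spaces (as in the proof that these are complexes) assemble $\check C,\hat C,\bar C$ into a long exact sequence $\cdots\to H_*(\check C)\to H_*(\hat C)\to H_*(\bar C)\to\cdots$; knowing $H_*(\bar C)\cong H_*(\partial B)$ and, say, $H_*(\hat C)\cong H_*(B,\partial B)$ together with naturality, the five lemma applied to the comparison with the long exact sequence of the pair $(B,\partial B)$ then identifies the remaining term with $H_*(B)$.
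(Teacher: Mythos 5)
Your perturbation strategy --- identify $\bar C$ directly with the Morse--Witten complex of $v|_{\partial B}$, and realize $\check C$ and $\hat C$ as the Morse complexes of an inward- resp.\ outward-pointing collar perturbation $v_{\pm}$ of $v$ --- is exactly the route followed in Chapter~$2$ of \cite{KM}, to which these notes implicitly defer for this Proposition, so the overall plan is correct; your index bookkeeping for which boundary critical points survive the perturbation and with what interior index is right, as is the degree-shifted identification of $\bar C_k=C^s_k\oplus C^u_{k+1}$ with the boundary-index-$k$ critical points of $v|_{\partial B}$.

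The one subtlety worth flagging, beyond what you already identify as the crux, is a matter of framing. The family $v_\delta$ is \emph{degenerate} at $\delta=0$: generators are created and destroyed there, so one cannot invoke a continuation map from the $\delta=0$ complex to the $\delta>0$ one as in the usual a priori invariance argument. What must actually be proved, for fixed small $\delta>0$, is a gluing/degeneration theorem: the ends of the compactified $1$-dimensional moduli spaces of $v_+$-trajectories between surviving generators consist of ordinary interior breakings together with $v_+$-trajectories that pass slowly through a shrinking neighbourhood of a now-absent unstable critical point, and the latter are in bijection (after gluing) with the boundary-obstructed chains counted by the composites $\partial^u_o\bar\partial^s_u$ and $\partial^u_s\bar\partial^s_u$. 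That analysis is the real content, and it is the finite-dimensional prototype of the boundary-obstructed gluing in Chapters $18$--$19$ of \cite{KM}. Finally, your five-lemma shortcut is a valid observation but cannot avoid this work: it needs one of the two nontrivial identifications as input, and the required commutativity with the singular long exact sequence of $(B,\partial B)$ already presupposes the same local moduli-space analysis for the maps $i_*,j_*,p_*$.
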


\begin{Exercise}\label{disk}Compute the three homologies of the disk using the vector field of the function depicted in Figure \ref{disc}.
\end{Exercise}

As in usual Morse theory, the independence statement is stronger than just the invariance of the isomorphism class of the group. Indeed, given two admissible choices $(f,g)$ and $(f',g')$ of Morse function and metric there is a well defined isomorphism
\begin{equation}\label{continuation}
\Phi((f,g),(f',g')): H_*(\check{C},\check{\partial},f,g)\rightarrow H_*(\check{C},\check{\partial},f,g)
\end{equation} 
so that
\begin{align*}
\Phi((f',g'),(f'',g''))\circ\Phi((f,g),(f',g'))&=\Phi((f,g),(f'',g''))\\
\Phi((f,g),(f,g))&=\mathrm{Id}
\end{align*}
These isomorphisms are constructed via continuation maps. The existence of the continuation maps provides an a priori proof of invariance (i.e. not referring to the isomorphism with singular homology).
\\
\par
Returning to the case of a space $M$ with an $S^1$-action, as mentioned in the Introduction a natural homology theory to consider is $S^1$-equivariant homology. One can check that our model provides an appropriate Morse-homological approach to its computation. In particular, it is shown in Section $2.6$ in \cite{KM} that $H^{S^1}_{\leq k}(M)$ is determined from the construction above applied to $M\times \mathbb{C}^N$ for $N$ big enough (and appropriate choice of Morse function on $\mathbb{C}^N$).

\begin{Exercise}\label{finiteLES}
Prove the usual long exact sequence of the triple
\begin{equation*}
\cdots\longrightarrow H_k(\partial B)\longrightarrow H_k(B)\longrightarrow H_k(X,\partial B)\longrightarrow H_{k-1}(\partial B)\longrightarrow \cdots
\end{equation*}
by constructing suitable chain maps in our model of Morse homology with boundary.
\end{Exercise}

\vspace{1cm}
\section{Floer homology}
We now apply the the blow-up construction discussed above to the geometric setup of Section \ref{SWeq}. First of all, the \textit{blown-up configuration space} is the space $\mathcal{C}^{\sigma}(Y,\spin)$ of triples $(B,r,\psi)$ where
\begin{itemize}
\item $B$ is a spin$^c$ connection;
\item $r$ is a non negative real number;
\item $\psi$ is a spinor with $\|\psi\|_{L^2}=1$.
\end{itemize}
This is the space obtained by passing to polar coordinates in the (infinite dimensional) vector space of spinors. It comes with the blow down map
\begin{align*}
\pi: \mathcal{C}^{\sigma}(Y,\spin)&\rightarrow \mathcal{C}(Y,\spin)\\
(B,r,\psi)&\mapsto (B,r\psi)
\end{align*}
which is a diffemorphism when restricted to irreducible configurations. The gauge group $\mathcal{G}(Y,\spin)$ naturally acts on $\mathcal{C}^{\sigma}(Y,\spin)$ and the action is now free. The quotient $\mathcal{B}^{\sigma}(Y,\spin)$ is an infinite dimensional manifold with boundary consisting of the configurations for which $r$ is zero. The pull-back of the gradient of the Chern-Simons-Dirac functional (\ref{grad}) extends to the whole blown-up configuration space as the gauge invariant vector field
\begin{equation*}
(\mathrm{grad}\mathcal{L})^{\sigma}(B,r,\psi)=(\frac{1}{2}\ast F_{B^t}+r^2\rho^{-1}(\psi\psi^*)_0, \Lambda(B,r,\psi), D_B\psi-\Lambda(B,r,\psi)\psi),
\end{equation*}
see also Equation \ref{blowform}. Here $\Lambda(B,r,\psi)$ is the real number $\langle \psi, D_B\psi\rangle_{L^2}$ and it plays the same role as the quantity $\Lambda(\phi)$ of the previous section (with $D_B$ in place of $L$). The critical points of this vector field can be understood in the following terms.\begin{lem}\label{critblow}
Consider a configuration $\mathfrak{b}=(B,r,\psi)$.
\begin{itemize}
\item If $r\neq0$, then $\mathfrak{b}$ is a critical point of $(\mathrm{grad}\mathcal{L})^{\sigma}$ if and only if its blow down $\pi(\mathfrak{b})$ (which is an irreducible configuration) is a critical point of $\mathrm{grad}\mathcal{L}$.
\item If $r$ is zero, then $\mathfrak{b}$ is a critical point of $(\mathrm{grad}\mathcal{L})^{\sigma}$ if and only if its blow down $\pi(\mathfrak{b})$ (which is reducible configuration) is a critical point of $(\mathrm{grad}\mathcal{L})$ and $\psi$ is an eigenvector of $D_B$.
\end{itemize}
\end{lem}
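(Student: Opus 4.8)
The plan is purely to unwind the definition of a critical point: $\mathfrak{b}=(B,r,\psi)$ is a zero of $(\mathrm{grad}\mathcal{L})^{\sigma}$ exactly when each of the three components of the displayed vector field vanishes, and I want to match this triple of equations against the monopole equations of Section~\ref{SWeq} for the blow-down $\pi(\mathfrak{b})=(B,r\psi)$. The only facts I need are the explicit formula for $(\mathrm{grad}\mathcal{L})^{\sigma}$, the identity $(\Psi\Psi^{*})_{0}=r^{2}(\psi\psi^{*})_{0}$ valid when $\Psi=r\psi$, and the self-adjointness of $D_{B}$, which guarantees that $\Lambda(B,r,\psi)=\langle\psi,D_{B}\psi\rangle_{L^{2}}$ is real.

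First I would handle the case $r\neq0$. The radial component $\Lambda(B,r,\psi)\,r$ vanishes if and only if $\Lambda(B,r,\psi)=0$; substituting this into the spinorial component $D_{B}\psi-\Lambda(B,r,\psi)\psi$, one sees it vanishes if and only if $D_{B}\psi=0$, equivalently (multiplying by $r\neq0$) $D_{B}(r\psi)=0$. The connection component equals $\tfrac12\ast F_{B^{t}}+r^{2}\rho^{-1}(\psi\psi^{*})_{0}=\tfrac12\ast F_{B^{t}}+\rho^{-1}\big((r\psi)(r\psi)^{*}\big)_{0}$, so it vanishes if and only if the corresponding monopole equation holds for the spinor $r\psi$. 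Hence all three components vanish precisely when $(B,r\psi)$ is a monopole; and since $r\neq0$ and $\|\psi\|_{L^{2}}=1$ the spinor $r\psi$ is nonzero, so $\pi(\mathfrak{b})$ is irreducible. The converse is the same chain of equivalences read in reverse (note that $D_{B}(r\psi)=0$ with $r\neq0$ forces $D_{B}\psi=0$ and hence $\Lambda(B,r,\psi)=0$).

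Next I would handle $r=0$. Now the radial component vanishes automatically, the connection component collapses to $\tfrac12\ast F_{B^{t}}=0$, i.e.\ $B^{t}$ is flat --- by Proposition~\ref{redsol} this is exactly the condition that the reducible configuration $(B,0)$ be a critical point of $\mathrm{grad}\mathcal{L}$ --- and the spinorial component becomes $D_{B}\psi-\Lambda(B,0,\psi)\psi$, which vanishes if and only if $D_{B}\psi=\Lambda(B,0,\psi)\psi$, that is, $\psi$ is an eigenvector of $D_{B}$ (necessarily with real eigenvalue $\Lambda(B,0,\psi)$ by Lemma~\ref{spectral}). Combining the two cases yields the statement.

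I do not expect any genuine obstacle here: this is a bookkeeping lemma, not an analytic one. The only points that require a little care are keeping track of the factor $r$ in the $r\neq0$ case (so as not to divide by it illegitimately, and so that $\Lambda(B,r,\psi)=0$ really is forced), and recognizing that the eigenvector condition which appears on the boundary $\{r=0\}$ is not an extra hypothesis imposed by hand but simply the vanishing of the ``spherical part'' of the Dirac equation once the radial part has been split off --- exactly as in the finite-dimensional model~(\ref{blowform}).
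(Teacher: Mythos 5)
Your proof is correct and is essentially the argument the paper compresses into one line: the blow-down is a diffeomorphism on the locus $r>0$, so there the critical-point condition transfers directly, while on the boundary $r=0$ the spinorial part of the vector field reproduces the finite-dimensional eigenvector picture of Exercise~\ref{crit}; you have simply made this explicit by checking each component of $(\mathrm{grad}\mathcal{L})^{\sigma}$. One small point worth flagging: you (correctly) treated the radial component as $\Lambda(B,r,\psi)\,r$, matching the finite-dimensional formula~(\ref{blowform}), whereas the paper's displayed formula for $(\mathrm{grad}\mathcal{L})^{\sigma}$ omits the factor of $r$ — that is a typo, and without the $r$ the boundary critical points would be forced to lie in $\ker D_B$, contradicting the lemma as stated.
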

This readily follows from the fact that the blow down is a diffeomorphism of the irreducible locus and the analogue of the description given in Exercise \ref{crit}.
\\
\par
We can then apply the construction of Morse homology with boundary described in the previous Section. To do this we need first to suitably perturb the Chern-Simons-Dirac functional $\mathcal{L}$ so that the critical points are non-degenerate (and in particular isolated). This means roughly speaking that the analogue of the Hessian is an isomorphism, but we will not enter the (quite delicate) technical details here. In light of Lemma \ref{critblow} we will require the following two conditions:
\begin{enumerate}
\item the critical points of $\mathrm{grad}\mathcal{L}$ are non-degenerate;
\item at the reducible critical points $(B,0)$ the spectrum of the (perturbed) Dirac operator $D_B$ is simple (i.e. the eigenspaces are one dimensional) and does not contain zero.
\end{enumerate}
The last condition implies that each eigenvalue of $D_B$ will give rise to a critical point in the boundary of $\mathcal{B}^{\sigma}(Y,\spin)$ (see again Exercise \ref{crit}).  Indeed, recall that we are restricting ourself to the unit sphere in the $L^2$ norm, so that each one dimensional eigenspace gives rise to a circle of critical points which are all identified under the action of the constant gauge transformations. The eigenvectors with positive eigenvalues correspond to stable critical points, while the ones with negative eigenvalues correspond to unstable critical points. Notice that the critical set in the blow-up is not compact (see Proposition \ref{compactness}) anymore because of the spectral theorem for Dirac operators (Lemma \ref{spectral}), so that there are infinitely many stable and unstable critical points for each reducible critical point.
\\
\par
Following the construction described in the previous section, we define the three chain complexes (which depend on the choice of metric and perturbation)
\begin{equation*}
(\check{C}_*(Y,\spin),\check{\partial}),\quad (\hat{C}_*(Y,\spin),\hat{\partial}),\quad (\bar{C}_*(Y,\spin),\bar{\partial})
\end{equation*}
whose underlying vector spaces (\ref{chain}) are generated by the critical points and whose differential (\ref{diff}) counts isolated trajectories. There is a complication regarding the gradings that we will discuss later in this section. The homologies
\begin{equation*}
\HMt_*(Y,\spin),\quad \HMf_*(Y),\quad \HMb_*(Y,\spin)
\end{equation*}
are the \textit{monopole Floer homology groups} associated to the given choice of metric and generic perturbation. The main invariance result is the following.
\begin{prop}\label{invariance}
The Floer homology groups are invariants of the pair $(Y,\spin)$, i.e. they are independent of the choice of metric and generic perturbation.
\end{prop}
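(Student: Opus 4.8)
The plan is to mirror the a priori invariance argument from finite-dimensional Morse homology (as recalled after the statement of the previous proposition): rather than identifying the groups with some topologically-defined object, we directly produce canonical isomorphisms between the homologies computed with respect to two different choices of data by means of continuation maps, and show these isomorphisms are compatible under composition. Concretely, fix two admissible pairs $(g_0,q_0)$ and $(g_1,q_1)$ of Riemannian metric and (abstract) perturbation, both satisfying conditions (1) and (2) above so that the blown-up gradient vector field $(\mathrm{grad}\mathcal{L})^\sigma$ is regular in the sense of Definition \ref{regularity}. The goal is an isomorphism $\Phi: \HMt_*(Y,\spin; g_0,q_0)\to\HMt_*(Y,\spin; g_1,q_1)$, and likewise for $\HMf$ and $\HMb$, which is the identity when the two pairs agree and satisfies $\Phi_{12}\circ\Phi_{01}=\Phi_{02}$ for three choices. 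Invariance of the isomorphism class is then immediate; the stronger transitivity statement is what one actually uses later (for instance in establishing functoriality, Proposition \ref{functoriality}).

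The first step is to set up the parametrized moduli problem. Choose a path $(g_t,q_t)_{t\in[0,1]}$ of data interpolating the two endpoints, and study on the cylinder $\mathbb{R}\times Y$ the non-autonomous flow equation for $(\mathrm{grad}\mathcal{L})^\sigma$ in which the metric/perturbation depends on the $\mathbb{R}$-coordinate, equal to $(g_0,q_0)$ near $-\infty$ and $(g_1,q_1)$ near $+\infty$. Counting isolated solutions of this equation that limit to critical points of the respective autonomous flows at the two ends defines, exactly as in \eqref{diff}, matrix-form chain maps $\check{m}$, $\hat{m}$, $\bar{m}$ between the two sets of chain complexes; the matrix entries again involve the boundary (reducible) maps composed with the interior ones, precisely because the reducibles sit in $\partial\mathcal{B}^\sigma(Y,\spin)$ and the same boundary-obstructedness phenomenon occurs for the cylinder equation. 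One must check these are chain maps by the usual compactness-and-gluing analysis of the one-dimensional parametrized moduli spaces (their boundary strata pair a continuation trajectory with a Floer trajectory at one end or the other, plus the extra triple-breaking contributions through the boundary), and that they are $\ztwo[U]$-module maps. The second step is to prove $\Phi$ is an isomorphism: concatenating the path $(g_t,q_t)$ with a reversed path back to $(g_0,q_0)$, a further gluing/homotopy argument (stretching the neck in the middle) shows the composite continuation map is chain-homotopic to the continuation map for a constant path, which in turn is chain-homotopic to the identity. This gives $\Phi_{10}\circ\Phi_{01}\simeq\mathrm{id}$ on homology, and symmetrically, so $\Phi_{01}$ is invertible. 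The same neck-stretching argument with three paths gives the cocycle/transitivity relation, and independence of $\Phi$ from the choice of interpolating path follows from a homotopy-of-homotopies argument (a two-parameter family of data).

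The main obstacle is analytic and is concentrated in two places, both inherited from the fact that we are not in finite dimensions and not dealing with an honest gradient flow on $\mathcal{B}^\sigma$. First, one needs the compactness of the relevant moduli spaces of trajectories for the non-autonomous equation: a priori the critical set in the blow-up is \emph{not} compact, since each reducible carries infinitely many stable and unstable critical points indexed by the eigenvalues of $D_B$ (Lemma \ref{spectral}), and as the metric varies along the path these eigenvalues move and can cross zero. Controlling this spectral flow — showing that only finitely many boundary critical points can be connected by an isolated parametrized trajectory, and that no energy escapes to infinity in the reducible directions — is the technical heart of the matter and requires the energy/compactness estimates of the theory (the analogue of Proposition \ref{compactness} for the perturbed, parametrized problem). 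Second, one must ensure that a generic choice of path $(g_t,q_t)$ makes all the parametrized moduli spaces cut out transversally, including within the boundary for the boundary-obstructed pairs, so that Definition \ref{regularity}-type regularity holds for the cylinder equation; this is a Sard–Smale argument over a Banach space of perturbations, delicate because of the stratified structure near $\partial\mathcal{B}^\sigma(Y,\spin)$. All of this is carried out in detail in \cite{KM}; here we only indicate that once these transversality and compactness inputs are in place, the algebra of continuation maps and chain homotopies is formally identical to the finite-dimensional model of the previous section.
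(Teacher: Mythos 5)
Your proposal is correct and follows essentially the same approach as the paper. The paper defers the argument to Remark~\ref{invrem}: invariance follows from treating $I\times Y$ with interpolating metric and perturbation as a cobordism and using the induced cobordism maps of equation~(\ref{indmap}), with the required composition law being the special case of functoriality (Proposition~\ref{functoriality}) established by the neck-stretching argument you describe.
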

\begin{rem}
In the case $\spin$ is non-torsion, we also require that no reducible solutions are present after perturbation, see Proposition \ref{redsol}.
\end{rem}

The naturality statement is interpreted in the same way as the discussion following Proposition \ref{disk}. In our case the continuation maps can be interpreted as maps induced by cobordisms, and we will discuss them thoroughly in the next section. We now focus on the basic example.

\begin{exm}\label{exS3}We compute the Floer homology groups of $S^3$ (see Proposition \ref{S3}). To do this, we suppose that it has the round metric. This has positive scalar curvature so Proposition \ref{weitz} tells us that before perturbing there are no irreducible solutions, while there is exactly one reducible solution thanks to Proposition \ref{redsol}. These two properties are stable under small perturbations so we can assume to have chosen a regular one (in the sense of Definition \ref{regularity}) with the same features. The critical points then form a doubly infinite tower lying over the reducible critical point, corresponding to the eigenvalues of the perturbed Dirac operator. We claim (see the upcoming Exercise \ref{traj} for the finite dimensional analogue) that the space of unparametrized trajectories between critical points corresponding to consecutive eigenvalues has dimension one. This implies that there are no differentials involved. In particular, $\HMt_*(Y,\spin)$, $\HMf_*(Y)$ and $\HMb_*(Y,\spin)$ are the (infinite dimensional) vector spaces generated by the stable, unstable, and both critical points. We will discuss gradings later.
\end{exm}
\begin{Exercise}\label{traj}
In the setting of Exercise \ref{crit}, show that the space of trajectories connecting the critical points corresponding to the eigenvalues $\lambda>\mu$ has dimension $2i-1$, where $i$ is the number of eigenvalues $\lambda\leq\nu<\mu$. Hint: consider a basis of eigenvectors $\{\phi_\nu\}$ labeled by the eigenvalue $\nu$. Then any trajectory entirely contained in the boundary of $(\mathbb{C}^n)^{\sigma}/S^1$ is the \textit{projectivization} of a trajectory in $\mathbb{C}^n$ of the form
\begin{equation*}
z(t)=\sum c_{\mu}e^{-\mu t}\phi_\mu
\end{equation*}
for some complex numbers $\{c_\mu\}$. When does this trajectory have the required limiting points?
\end{Exercise}

\vspace{0.5cm}

\textbf{Floer homology vs. Morse homology. }We briefly and informally discuss the main differences between Floer homology and Morse homology. First of all, of course we are working in an infinite dimensional setting, but Morse homology in infinite dimension was actually one of the first big achievements of Morse theory (\cite{Bott}). Indeed, one can consider the energy functional on loop spaces to study the existence of closed geodesics. The main difference with Floer homology is that in that case the Hessian at a critical point has finitely many negative eigenvalues, while in Floer homology it has infinitely many. This is a consequence of the spectral theorem for first order self adjoint operators in Lemma \ref{spectral}, and has two main consequences.
\\
\par
The first consequence is that there is not a well defined notion of index (hence of grading) anymore. Nevertheless there is a well defined notion of \textit{relative} grading, as we will discuss in detail.
\\
\par
The second consequence is more fundamental in nature. Unlike the Morse case, the ODE in the configuration space
\begin{equation}\label{flow}
\frac{d}{dt}(B,\Psi)=-\mathrm{grad}\mathcal{L}(B,\Psi)
\end{equation}
defining trajectories of the gradient flow \textit{cannot} be solved at a general point, not even for small time. To see this, we consider an analogous linear case in lower dimension, see Equation (\ref{dirac1}). Given a smooth complex valued function $u(\theta)$ on $S^1$, we want to find a solution $u(t,\theta)$ of
\begin{equation*}
\frac{d}{dt}u=-i\frac{d}{d\theta}u
\end{equation*}
with $u(0,\theta)=u(\theta)$. As $-id/d\theta$ is the Dirac operator on the circle, this problem has the same qualitative behavior as the linearization of (\ref{flow}). Given the Fourier decomposition $u(\theta)=\sum_{n\in\mathbb{Z}} a_n e^{n\theta}$, a solution (if it exists) has the form
\begin{equation*}
u(t,\theta)=\sum_{n\in\mathbb{Z}} a_ne^{nt} e^{n\theta}.
\end{equation*}
On the other hand, this sum is not well defined for $t\neq0$ if the coefficients $\{a_n\}$ do not decay sufficiently fast.
\par
One of Floer's key insights is that even though (\ref{flow}) does not make sense as an ODE, one could treat it as a PDE for which we have a much better grasp. Indeed, in our example case this PDE is just the Cauchy-Riemann equation on the annulus $\mathbb{R}\times S^1$. As we will see in the next section, the PDE associated to (\ref{flow}) are the original four dimensional Seiberg-Witten equations.
\\
\par
In classical Morse theory, in order to find trajectory space $M(a,b)$ between critical points $a$ and $b$, one intersects the respective stable and unstable manifolds. This cannot be carried over in our setting, as the latter do not exist. We discuss the main aspects of Floer's approach in the setting of usual Morse homology to avoid various complications that arise in the Seiberg-Witten one. The analytic foundations of this are provided by the classic work of Atiyah-Patodi-Singer (\cite{APS}). Suppose we are given two critical points $a,b$ of a Morse function $f$. Consider the space $P(a,b)$ consisting of maps
\begin{equation*}
\gamma:\mathbb{R}\rightarrow M
\end{equation*}
that converge sufficiently fast to $a$ at $-\infty$ and $b$ at $+\infty$. The tangent space $T_{\gamma}P(a,b)$ at a configuration $\gamma$ consists of sections of the bundle $\gamma^* TM$ over $\mathbb{R}$. The map
\begin{equation*}
\gamma\mapsto \frac{d}{dt}\gamma+\mathrm{grad}f(\gamma)
\end{equation*}
is then a section $s$ of the tangent bundle $TP(a,b)$ whose zero set is exactly the space of trajectories $M(a,b)$. In particular, if the section is transverse to the zero section, i.e. the linearization
\begin{equation}\label{MS}
ds: T_{\gamma}P(a,b)\rightarrow T_{\gamma}P(a,b)
\end{equation}
is surjective, then by the infinite dimensional inverse function theorem the set of trajectories $M(a,b)$ has a natural structure of smooth manifold. The linearization at $\gamma$ is the map
\begin{equation*}
\xi(t)\mapsto \frac{d}{dt}\xi(t)+ \mathrm{Hess}_{\gamma(t)}\xi(t),
\end{equation*}
where $\xi$ is a section of $\gamma^* TM$. In this case, Atiyah-Patodi-Singer theory tells us that, provided the critical points $a$ and $b$ are Morse, this operator is Fredholm and its index is exactly the difference of the indices of $a$ and $b$, as expected. This difference can be interpreted also as the \textit{spectral flow} of the family of self-adjoint operators $\mathrm{Hess}_{\gamma(t)}$, i.e. the number of eigenvalues (with sign) that goes from negative to positive (see Figure \ref{spflow}).

\begin{figure}  \centering
\def\svgwidth{\textwidth}
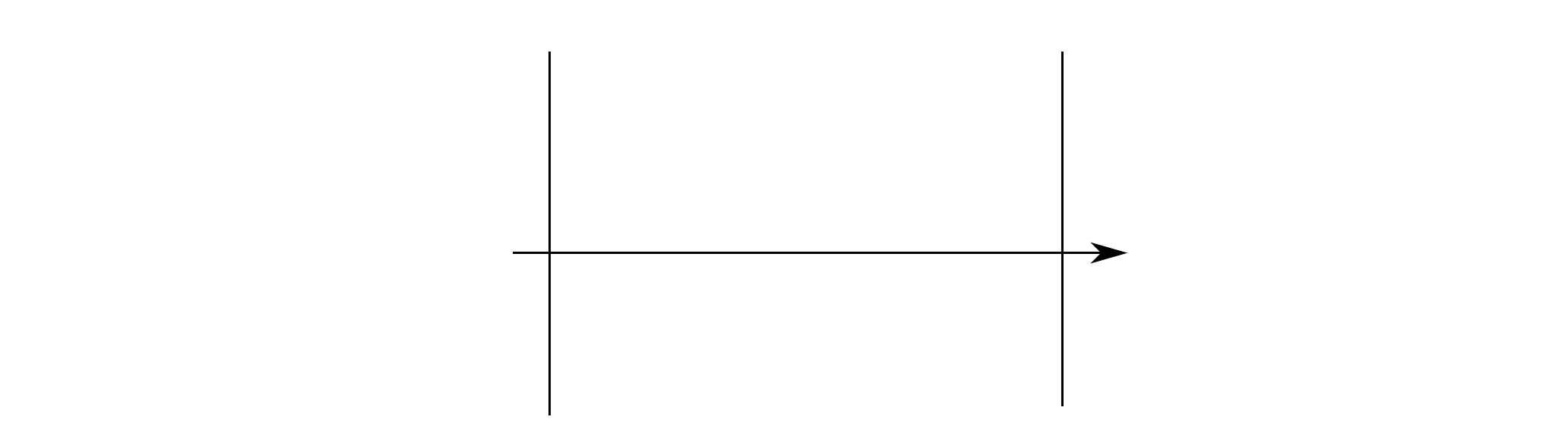
    \caption{A family of operators with spectral flow $1$. The lines connecting the dots indicate the evolution of the eigenvalues of the operators.}
    \label{spflow}

\end{figure} 

The key point of this reformulation is that it makes sense in infinite dimensions: the Morse-Smale condition is equivalent to the operator (\ref{MS}) being surjective at each point. The dimension of the space of trajectories (which is the analogue of the difference of the indices of the critical points) is the index of the operator, which can be interpreted as the spectral flow of a suitable family of self-adjoint operators.

\vspace{0.5cm}
\textbf{Relative gradings. }As mentioned before, even though the index of critical points is not well defined, we can still define relative gradings. Given two critical points $\mathfrak{a}$ and $\mathfrak{b}$ in $\mathcal{C}^{\sigma}(Y,\spin)$ we can define the quantity
\begin{equation}\label{grading}
\mathrm{gr}(\mathfrak{a},\mathfrak{b})\in\mathbb{Z}
\end{equation}
as the spectral flow of the Hessians of a path $\gamma$ connecting them. This is well defined because the spectral flow is invariant under homotopy and $\mathcal{C}^{\sigma}(Y,\spin)$ is simply connected. This number computes the expected dimension of the moduli space of trajectories connecting $\mathfrak{a}$ to $\mathfrak{b}$.
\par
On the other hand, we are interested in the moduli space of trajectories connecting the critical points \textit{up to gauge} $M([\mathfrak{a}],[\mathfrak{b}])$, where $[\mathfrak{a}]$ and $[\mathfrak{b}]$ denote the gauge equivalence classes of the critical points. The components of this space have different dimensions corresponding to the different lifts of $[\mathfrak{a}]$ and $[\mathfrak{b}]$. This is a manifestation of the fact that the moduli space of configurations $\mathcal{B}^{\sigma}(Y,\spin)$ has non-trivial fundamental group (see Proposition \ref{homotopytype}). In particular we can decompose the space of trajectories
\begin{equation}\label{hdec}
M([\mathfrak{a}],[\mathfrak{b}])=\bigcup_{z\in\pi_1([\mathfrak{a}],[\mathfrak{b}])}M_z([\mathfrak{a}],[\mathfrak{b}])
\end{equation}
as the union over the moduli spaces in a given relative homotopy class. Notice that in the definition of the Floer chain complexes we only consider the components of the moduli spaces which are zero dimensional.
\par
A consequence of this is that the relative grading (\ref{grading}) is not well defined up to gauge. Nevertheless, one can show its value is well defined in $\mathbb{Z}/ d\mathbb{Z}$, where $d$ is the positive generator of the image of
\begin{align*}
H^1(Y,\mathbb{Z})&\rightarrow \mathbb{Z}\\
x&\mapsto \langle c_1(\spin)\cup x,[Y]\rangle.
\end{align*}
We define this value to be the relative grading of the two critical points. As $c_1$ is even, $d$ is an even number. As a special case, when $\spin$ is torsion there is a well defined integral relative grading. We will discuss absolute gradings in more detail.

\begin{rem}\label{gradingchain}
One has to be a little careful when using this relative grading between critical points to define a relative grading on the chain complexes. This is because the grading (which is the dimension of the space of trajectories) is sometimes different from the difference of the indices. One needs to shift the gradings accordingly, as in Equation (\ref{chain}). 
\end{rem}

\begin{Exercise}
The fact that the relative grading is not well defined as an integer in infinite dimensions can also be interpreted as the fact that there are closed loops of self-adjoint operators with non trivial spectral flow. Find an explicit example. This shows that the space of such operators has non trivial fundamental group.
\end{Exercise}
\begin{rem}
It is important to notice that when $\spin$ is not torsion the Chern-Simons-Dirac functional is only circle valued. On the other hand the whole construction can be performed without need to use coefficients in the Novikov ring (as it is standard in those situations, see for example \cite{HutLee}). This is because it turns out that when the perturbation is regular there cannot be trajectories connecting a critical point to itself. This uses in an essential way the fact that the space of operators has non trivial topology as discussed above.
\end{rem}

\vspace{0.5cm}
\textbf{Duality and exact sequences. }
The monopole Floer groups are infinite dimensional analogues of the three Morse homology groups of the pair $(\mathcal{B}^{\sigma}(Y,\spin),\partial \mathcal{B}^{\sigma}(Y,\spin))$. From this point of view, equation \ref{LES} is nothing but the long exact sequence provided by Exercise \ref{finiteLES}.
\par
In the same spirit, the isomorphism of Proposition \ref{poincare} can be thought as a version of Poincar\'e -Lefschetz duality. In fact, given a spin$^c$ structure $\spin$ on $Y$ there is a natural spin$^c$ structure on $-Y$ (the manifold with orientation reversed) where the spinor bundle is unchanged and Clifford multiplication is given by $-\rho$. There is also a natural identification between $\mathcal{C}(Y,\spin)$ and $\mathcal{C}(-Y,\spin)$ and we have that the Chern-Simons-Dirac functionals are related by
\begin{equation*}
\mathcal{L}(-Y)=-\mathcal{L}(Y).
\end{equation*}
In particular, the critical points in the two cases are identified and the moduli spaces are obtained by reversing the time direction:
\begin{equation*}
M_z(Y;[\mathfrak{a}],[\mathfrak{b}])\cong M_z(-Y;[\mathfrak{b}],[\mathfrak{a}]).
\end{equation*}
The main difference is that the role of boundary stable and unstable critical points are reversed. The proof of Proposition \ref{poincare} is then easily understood in terms of finite dimensional Morse theory: changing the sign to $f$ provides natural identifications at the chain level
\begin{align*}
\check{C}_*(-f)&\cong \hat{C}^*(f)\\
\hat{C}_*(-f)&\cong \check{C}^*(f)\\
\bar{C}_*(-f)&\cong \bar{C}^*(f).
\end{align*}
On the right hand side we are considering the dual chain complexes, which compute the various cohomologies of the manifold with boundary $(M,\partial M)$. At the homology level this induces isomorphisms
\begin{align*}
H_*(M)&=H^*(M,\partial M)\\
H_*(M,\partial M)&=H^*(M)\\
H_*(\partial M)&= H^*(\partial M).
\end{align*}
\begin{Exercise}
What happens to the gradings under these identifications? Consider the finite dimensional case for simplicity.
\end{Exercise}

\vspace{1cm}
\section{Cobordisms}
In this section we discuss functoriality for monopole Floer homology. So far we have been considering the three dimensional version of the Seiberg-Witten equations. In fact, we have been telling the story in the opposite direction of how it has actually historically developed. The four dimensional equations were introduced in \cite{Wit} in order to define invariants of smooth four-manifolds. Up to the present day, these are the most powerful invariants available in the subject. A main drawback is that they are in general very hard to compute, as they involve counting the solutions of non-linear PDEs. In this sense a natural approach to this problem is to try to extend the theory to three-manifolds and cobordisms between them (in the spirit of topological quantum field theories): one can try to compute the invariants by cutting the four manifold into simpler pieces for which we know the computations, and then reconstruct the invariants of the global object using suitable pairing theorems. Monopole Floer homology was born with this purpose in mind (even though it turned out to be extremely useful in purely three dimensional problems as well). 
\\
\par
As said in the Introduction, we will assume that the reader is already familiar with the four-dimensional Seiberg-Witten equations. The more explicit description of a spin$^c$ structure introduced in Section \ref{SWeq} works as follows in dimension four. A spin$^c$ structure $\spin_X$ of a Riemannian four-manifold $X$ (possibly with boundary) consists of a rank $4$ hermitian bundle $S_X\rightarrow X$ together with a Clifford multiplication
\begin{equation*}
\rho: TX\rightarrow \mathrm{Hom}(S_X,S_X).
\end{equation*}
This is a bundle map such that at each point $x\in X$ for each oriented orthonormal frame $e_0,e_1,e_2,e_3$ of $T_xX$ there is an orthonormal basis of $S_x$ so that
\begin{equation*}
\rho(e_0)=\begin{bmatrix}
0 & -I_2 \\
I_2 & 0
\end{bmatrix},
\quad
\rho(e_i)=\begin{bmatrix}
0 & -\sigma_i^* \\
\sigma_i & 0
\end{bmatrix}
\quad (i=1,2,3).
\end{equation*}
These are $2\times 2$ block matrices with $2\times 2$ blocks. Here $I_2$ is the identity matrix while $\sigma_i$ are the Pauli matrices in Equation (\ref{pauli}). We can extend the Clifford multiplication to differential forms as in the three dimensional case. It is clear from this block description that $S_X$ splits as the direct sum of two rank two bundles $S^+$ and $S^-$ so that Clifford multiplication exchanges them. These can globally described as the $-1$ and $+1$ eigenspaces of the Clifford multiplication by the volume form. Furthermore Clifford multiplication provides an isometry
\begin{equation*}
\rho: \Lambda^+\rightarrow \mathfrak{su}(S^+)
\end{equation*}
where $\Lambda^+$ is the bundle of self-dual two-forms.
The Seiberg-Witten equations are then the equations for a pair $(A,\Phi)$ consisting of a spin$^c$ connection $A$ and a spinor $\Phi\in\Gamma(S^+)$ given by
\begin{align}\label{SW4}
\begin{split}
\frac{1}{2}\rho( F^+_{A^t})-(\Phi\Phi^*)_0 &=0\\
D_A^+\Phi&=0.
\end{split}
\end{align}
Again, here $A^t$ denotes the connection induced on the determinant line bundle $\Lambda^2 S^+$. Recall that the Dirac operator $D^+_A$ is the composition
\begin{equation*}
\Gamma(S^+)\stackrel{\nabla_A}{\longrightarrow}\Gamma(T^*X\otimes S^-)\stackrel{\rho}{\longrightarrow}\Gamma(S^-)
\end{equation*}
which is a first order elliptic operator whose index (if $X$ is closed) is
\begin{equation*}
(c_1(S^+)^2[X]-\sigma(X))/8
\end{equation*}
by the Atiyah-Singer index theorem.

\vspace{0.8cm}
We now discuss the relation between the four dimensional equation and the three dimensional ones by considering the case when $X$ is the product Riemannian manifold $I\times Y$. In this case Clifford multiplication by $\partial/\partial t$ (which can be used as the vector $e_0$ in the local picture above) provides an isometry between $S^+$ and $S^-$, which can hence be both identified with the spinor bundle $S$ of $Y$. A family of configurations $(B(t),\Psi(t))$ on $Y$ gives rise to a configuration $(A,\Phi)$ on $I\times Y$: the connection $A$ is defined at the level of covariant derivatives by
\begin{equation}\label{indconn}
\nabla_A=\frac{d}{dt}+\nabla_B.
\end{equation}
\begin{Exercise}
Check the identities
\begin{align*}
D_A^+&=\frac{d}{dt}+ D_B\\
F_{A^t}&=dt\wedge \left(\frac{d}{dt} B^t\right)+ F_{B^t}
\end{align*}
in the case $A$ arises from a family of connections $B(t)$ on $Y$.
\end{Exercise}
From these computations we have the following results.
\begin{prop}
Let $(A,\Phi)$ the four dimensional configuration on $I\times Y$ arising from a family $(B(t),\Psi(t))$ on $Y$. Then $(A,\Phi)$ solves (\ref{SW4}) if and only if $(B(t),\Psi(t))$ solves the three dimensional gradient flow equations (\ref{gradfloweq}).
\end{prop}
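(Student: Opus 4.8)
The plan is to verify the two equations in \eqref{SW4} one at a time, matching each with one of the two components of the gradient-flow system \eqref{gradfloweq}, using the identities of the preceding exercise
\begin{equation*}
D_A^+ = \frac{d}{dt} + D_B, \qquad F_{A^t} = dt\wedge\Bigl(\frac{d}{dt}B^t\Bigr) + F_{B^t}.
\end{equation*}
The Dirac equation is immediate: under the isometry $S^+\cong S^-\cong S$ furnished by Clifford multiplication by $\partial/\partial t$, the spinor $\Phi$ corresponds to the path $t\mapsto\Psi(t)$, so $D_A^+\Phi=0$ reads $\frac{d}{dt}\Psi(t)+D_{B(t)}\Psi(t)=0$, which is exactly the spinor component $\frac{d}{dt}\Psi=-D_B\Psi$ of $-\mathrm{grad}\mathcal{L}$.

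For the curvature equation I would first compute the self-dual projection of $F_{A^t}$ on $I\times Y$. With the product metric, every two-form on $I\times Y$ is uniquely $dt\wedge a+b$ with $a$ a (time-dependent) one-form and $b$ a two-form on $Y$, and the four-dimensional Hodge star acts by $\ast_4(dt\wedge a)=\ast_3 a$ and $\ast_4 b=dt\wedge\ast_3 b$; since $\ast_3^2=\mathrm{id}$ on a three-manifold, $dt\wedge c+\ast_3 c$ is self-dual for every one-form $c$ on $Y$. Applying this to $F_{A^t}=dt\wedge\frac{d}{dt}B^t+F_{B^t}$ gives
\begin{equation*}
F^+_{A^t} = \frac12\bigl(dt\wedge c + \ast_3 c\bigr), \qquad c = \frac{d}{dt}B^t + \ast_3 F_{B^t},
\end{equation*}
so the four-dimensional curvature equation $\frac12\rho(F^+_{A^t})=(\Phi\Phi^*)_0$ becomes an equation for the single one-form $c$ on $Y$.

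It then remains to transport this identity back to $Y$. Under $S^+\cong S$ the endomorphism $(\Phi\Phi^*)_0$ of $S^+$ becomes $(\Psi(t)\Psi(t)^*)_0$, and Clifford multiplication by the self-dual form $dt\wedge c+\ast_3 c$ corresponds to a fixed multiple of the three-dimensional $\rho(c)$, the constant being read off from the block description of $\rho$ together with the isometry $\rho(\partial/\partial t)$. Combining this with the relation $\rho(\ast\alpha)=-\rho(\alpha)$ of Exercise \ref{hodge} and the factor $2$ between the connection on $S$ and the induced connection $B^t$ on $\det S$, the curvature equation turns into $\frac{d}{dt}B=-\bigl(\frac12\ast F_{B^t}+\rho^{-1}(\Psi\Psi^*)_0\bigr)$, i.e.\ precisely the connection component of $-\mathrm{grad}\mathcal{L}$. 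Each of these manipulations is reversible, which gives the converse, and the proposition follows. The only delicate point is pure bookkeeping: one must fix consistently the signs appearing in the product Hodge star, in the passage between $\rho$ in dimensions four and three via $\rho(\partial/\partial t)$, and in the relation $B\leftrightarrow B^t$. None of these is conceptually hard, but making all the constants line up is where essentially all of the effort goes.
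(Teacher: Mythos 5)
Your proof sketch is correct and follows exactly the route the paper intends (the exercises immediately before and after the proposition): identify $D_A^+$ with $\tfrac{d}{dt}+D_B$ for the spinor equation, and use the product Hodge-star identity to extract $F^+_{A^t}=\tfrac12\bigl(dt\wedge c+\ast_3 c\bigr)$ with $c=\tfrac{d}{dt}B^t+\ast_3 F_{B^t}$ for the curvature equation. The only thing left implicit is the value of the ``fixed multiple,'' which is $\rho_4(dt\wedge c+\ast_3 c)|_{S^+}=-2\rho_3(c)$; combined with $\tfrac{d}{dt}B^t=2\tfrac{d}{dt}B$ this produces precisely the claimed factor cancellation, so your bookkeeping warning is well placed but the constants do line up.
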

\begin{Exercise}
Prove the identity
\begin{equation*}
\ast_4 F_{A^t}= \ast\left( \frac{d}{dt} B^t\right)+ dt\wedge \ast F_{B^t}.
\end{equation*}
where we denote the four dimensional Hodge star by $\ast_4$ to avoid confusion. Use this identity to prove the Proposition above.
\end{Exercise}

This result provides us with the following interpretation:
\begin{align*}
\{\text{critical points of $\mathcal{L}$}\}&\rightarrow \{\text{translation invariant solutions of (\ref{SW4}) on $\mathbb{R}\times Y$}\}\\
\{\text{trajectories of $\mathrm{grad}\mathcal{L}$ from $[\mathfrak{a}]$ to $[\mathfrak{b}]$}\}&\rightarrow\{\text{solutions of (\ref{SW4}) that converge to $[\mathfrak{a}],[\mathfrak{b}]$ at $\pm\infty$}\}.
\end{align*}

It is important to remark that this correspondence is not bijective because not all the configurations on $I\times Y$ arise from families of configurations on $Y$. For a general connection on $I\times Y$, the covariant derivative in the $\partial/\partial t$ direction is not constant. On the other hand, a configuration $(A,\Phi)$ on $I\times Y$ always gives rise to a path of three dimensional configurations $(\check{A}(t),\check{\Phi}(t))$ by restricting to the slices. Nevertheless, the correspondence is a bijection after we quotient by action of the corresponding gauge groups, so we will not have to worry about this issue.

\vspace{0.5cm}
\textbf{Blowing up. }Suppose $X$ is compact. We can define the blown-up configuration space of a pair $(X,\spin)$ following the three dimensional case as the space of triples
\begin{equation*}
\mathcal{C}^{\sigma}(X,\spin_X)=\{(A,s,\phi)\}
\end{equation*}
where $s\in\mathbb{R}^{\geq0}$ and $\phi$ is a spinor with $\|\phi\|_{L^2}=1$. The Seiberg-Witten equations naturally extend to this space as the equations
\begin{align}\label{SW4bl}
\begin{split}
\frac{1}{2}\rho( F^+_{A^t})-s^2(\phi\phi^*)_0 &=0\\
D_A^+\phi&=0.
\end{split}
\end{align}
There is a subtlety when comparing this to the three dimensional case on a product $I\times Y$. A configuration $(A,s,\phi)$ on the blown up configurations $I\times Y$ gives rise to a path of configurations on $Y$ given by
\begin{equation*}
(\check{A}(t), s\|\check{\phi}(t)\|_{L^2(Y)} , \check{\phi}(t)/\|\check{\phi}(t)\|_{L^2(Y)}),
\end{equation*}
where the check denotes the restriction to $\{t\}\times Y$. Of course, this makes sense provided that $\|\check{\phi}(t)\|_{L^2(Y)}$ is non-zero for each $t\in I$. This is generally false, but following unique continuation result tells us that when restricting our attention to solutions of the blown-up Seiberg-Witten equations the latter condition is actually always satisfied. This is the analogue for the Dirac operator of the unique continuation property for the usual $\bar{\partial}$ operator.
\begin{prop}\label{soleqbl}
Let $\phi$ be a solution of $D_A^+\phi=0$ on $I \times Y$. If the restriction of $\phi$ to $\{t\}\times Y$ is zero for some $t\in I$, then $\phi$ is identically zero. In particular there is a one to one correspondence between (gauge equivalence classes of) integral paths of the vector field $(\mathrm{grad}\mathcal{L})^{\sigma}$ in $\mathcal{C}^{\sigma}(Y,\spin)$ and solutions of (\ref{SW4bl}).
\end{prop}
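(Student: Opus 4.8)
The statement has two parts --- the unique continuation property for $D_A^+$, and the bijection that follows from it --- and the second is a formal consequence of the first together with the product-case correspondence already established between solutions of (\ref{SW4}) and gradient trajectories of $\mathcal{L}$. So the plan is to prove unique continuation first and then assemble the bijection.

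\textbf{Unique continuation. }Since $I$ is contractible, the first step is to put $A$ in temporal gauge: one solves a first-order linear ODE in $t$ (pointwise on $Y$) to produce a gauge transformation killing the $dt$-component of $A$. After this, $A$ arises from a path $B(t)$ of connections on $Y$, and by the identity $D_A^+ = \tfrac{d}{dt} + D_{B(t)}$ the equation $D_A^+\phi = 0$ becomes the evolution equation $\tfrac{d}{dt}\check\phi(t) = -D_{B(t)}\check\phi(t)$ for the path of spinors $\check\phi(t)$ on $Y$. A naive Gronwall estimate for $y(t) := \|\check\phi(t)\|_{L^2(Y)}^2$ will not close, because $D_{B(t)}$ is unbounded; instead I would run the Agmon--Nirenberg log-convexity argument. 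Differentiating twice and using self-adjointness of $D_{B(t)}$ gives $\dot y = -2\langle D_{B(t)}\check\phi,\check\phi\rangle$ and $\ddot y = 4\|D_{B(t)}\check\phi\|^2 - 2\langle \dot D_{B(t)}\check\phi,\check\phi\rangle$, whence by Cauchy--Schwarz $y\ddot y - \dot y^2 \ge -2\langle \dot D_{B(t)}\check\phi,\check\phi\rangle\,y$. The key point is that $\dot D_{B(t)}$ is a zeroth-order operator (Clifford multiplication by $\dot B^t$), hence bounded, so the right-hand side is $\ge -Cy^2$ and $(\log y)'' \ge -C$. A nonnegative function whose logarithm is convex up to a bounded perturbation cannot vanish at an interior point without vanishing on a neighborhood of it, and a connectedness argument then propagates the vanishing to all of $I$, giving $\phi\equiv 0$. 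I expect this estimate --- and in particular the fact that the unbounded family $D_{B(t)}$ can still be handled because its $t$-derivative is bounded --- to be the one genuinely analytic ingredient and the main obstacle; the careful version is in Chapter $7$ of \cite{KM}.

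\textbf{The bijection. }Given a solution $(A,s,\phi)$ of (\ref{SW4bl}) on $I\times Y$, the normalization $\|\phi\|_{L^2(I\times Y)}=1$ forces $\phi\not\equiv 0$, so unique continuation gives $\check\phi(t)\neq 0$ for every $t$; hence the slice-wise assignment
\begin{equation*}
t\ \longmapsto\ \bigl(\check A(t),\ s\,\|\check\phi(t)\|_{L^2(Y)},\ \check\phi(t)/\|\check\phi(t)\|_{L^2(Y)}\bigr)
\end{equation*}
is a genuine path in $\mathcal{C}^\sigma(Y,\spin)$. To see it is an integral curve of $(\mathrm{grad}\mathcal{L})^\sigma$ I would combine the un-blown-up correspondence (a product solution of (\ref{SW4}) in temporal gauge is a gradient trajectory of $\mathcal{L}$) with the explicit formula for $(\mathrm{grad}\mathcal{L})^\sigma$: the passage to the slices is exactly the polar-coordinate bookkeeping of (\ref{blowform}), with $r(t)=s\|\check\phi(t)\|_{L^2(Y)}$ now depending on $t$ and its evolution governed by the scalar term $\Lambda(B,r,\psi)=\langle \psi, D_B\psi\rangle_{L^2}$. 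For the inverse, given an integral curve $(B(t),r(t),\psi(t))$ of $(\mathrm{grad}\mathcal{L})^\sigma$ one takes $A$ to be the temporal-gauge connection from $B(t)$ and $\phi(t)=(r(t)/s)\psi(t)$ with $s$ chosen so that $\int_I (r(t)/s)^2\,dt = 1$ (and in the reducible case $r\equiv 0$ one takes $s=0$, $\phi(t)=\psi(t)/\sqrt{|I|}$), and checks directly that this solves (\ref{SW4bl}). Finally, the only indeterminacy introduced by the temporal-gauge choice is the residual time-independent gauge freedom, so after passing to gauge-equivalence classes the two assignments are mutually inverse, which is the asserted bijection.
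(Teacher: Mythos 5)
The paper itself states Proposition \ref{soleqbl} without proof, deferring the analytic content to Chapter $7$ of \cite{KM} as you note, so there is no in-text argument to compare against. Your reconstruction is the standard route and agrees with that reference: temporal gauge reduces $D_A^+\phi=0$ to the evolution equation $\dot{\check\phi}=-D_{B(t)}\check\phi$, the Agmon--Nirenberg log-convexity estimate closes precisely because $\dot D_{B(t)}$ is Clifford multiplication by $\tfrac12\dot B^t$ and hence a bounded zeroth-order operator even though $D_{B(t)}$ is not, and the resulting convexity of $\log y + Ct^2/2$ on components of $\{y>0\}$ rules out $y$ vanishing at an interior boundary point of such a component, so $\{y=0\}$ is open as well as closed. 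The slice-wise formula for the bijection is exactly the one displayed just before the proposition in the text, and you correctly identify the residual $t$-independent gauge freedom as the accounting that makes the two assignments inverse ``modulo gauge.''
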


\vspace{0.5cm}
\textbf{The map induced by a cobordism. }We now describe how a spin$^c$ cobordism $(W,\spin_W)$ between two spin$^c$ three manifolds $(Y_-,\spin_-)$ and $(Y_+,\spin_+)$ induces a map between the monopole Floer homology groups. Suppose that $W$ has a Riemannian metric which is cylindrical near the boundary, i.e. isometric to a product $I\times (\bar{Y}_-\amalg Y_+)$. We can construct the Riemannian four manifold with cylindrical ends $W^*$ by attaching the two half infinite ends $(-\infty,0]\times Y_-$ and $[0,\infty)\times Y_+$. The spin$^c$ structure on $W$ naturally extends to $W^*$.
\par
Given two critical points $[\mathfrak{b}_{\pm}]$ for the (perturbed) Chern-Simons-Dirac functional on $Y_{\pm}$, we can consider the space
\begin{equation*}
M(W^*, \spin_W, [\mathfrak{b}_{-}],[\mathfrak{b}_{+}])
\end{equation*} 
of solutions to the equations (\ref{SW4bl}) in the blow-up that converge to $[\mathfrak{b}_{-}]$ respectively on the incoming/outgoing cylindrical end. This makes sense because by the second part of Proposition \ref{soleqbl} we can interpret the restriction of the solutions to the cylinders $(-\infty,0]\times Y_-$ and $[0,\infty)\times Y_+$ as half trajectories for the vector fields $(\mathrm{grad}\mathcal{L})^{\sigma}$ in the blow up.
\begin{rem}
As we are dealing with a non compact manifold, the definition of blow up has to be slightly changed, see the Remark in Section $6.1$ of \cite{KM}. This will not affect the rest of our discussion.
\end{rem}
These equations can be perturbed in the interior of the compact cobordism $X$ so that all these moduli spaces are transversely cut out. Notice that the space of configurations converging to $[\mathfrak{b}_{\pm}]$ decomposes according to the relative homotopy class. This is analogous to the decomposition (\ref{hdec}) according to the relative homotopy class of the path. In particular we can write
\begin{equation*}
M(W^*, \spin_W, [\mathfrak{b}_{-}],[\mathfrak{b}_{+}])=\bigcup_{z} M_z(W^*, \spin_W, [\mathfrak{b}_{-}],[\mathfrak{b}_{+}])
\end{equation*} 
The expected dimension of the moduli spaces changes from component to component, and we define this to be the \textit{relative grading}. To define the maps induced by the cobordism, we will count solutions in zero dimensional moduli spaces (recall that solutions are not translations invariant anymore). In particular one can define the linear map
\begin{equation*}
m^o_o: C^o(Y_-)\rightarrow C^o(Y_+)
\end{equation*}
by counting the (finitely many) solutions on the cobordism connecting irreducible critical points. These are necessarily irreducible. Similarly by counting irreducible solutions we obtain the maps $m^o_s, m^u_s$ and $m^u_o$, while reducible solutions give rise to $\bar{m}^s_s, \bar{m}^s_u, \bar{m}^u_s, \bar{m}^u_u$. We then package these into the maps
\begin{align}\label{indmap}
\begin{split}
\check{m}:\check{C}_*(Y_-)\rightarrow \check{C}_*(Y_+)\\
\hat{m}:\hat{C}_*(Y_-)\rightarrow \hat{C}_*(Y_+)\\
\bar{m}:\bar{C}_*(Y_-)\rightarrow \bar{C}_*(Y_+)
\end{split}
\end{align}
given in components by
\begin{align*}
\check{m}&=
\begin{bmatrix}
m^o_o & m^u_o\bar{\partial}^s_u(Y_-)+\partial^u_o(Y_+)\bar{m}^s_u\\
m^o_s & \bar{m}^s_s+ m^u_s\bar{\partial}^s_u(Y_-)+ \partial^u_s(Y_+)\bar{m}^s_u
\end{bmatrix}\\
\hat{m}&=\begin{bmatrix}
m^o_o & m^u_o\\
\bar{m}^s_u\partial^o_s(Y_-)+\bar{\partial}^s_u(Y_+)m^o_s &  \bar{m}^u_u+\bar{m}^s_s\partial^u_s(Y_-)+\bar{m}^s_u(Y_=)m^u_s
\end{bmatrix}\\
\bar{m}&=
\begin{bmatrix}
\bar{m}^s_s & \bar{m}^u_s\\
\bar{m}^s_u & \bar{m}^u_u
\end{bmatrix}.
\end{align*}
Here, for example, $\bar{\partial}^s_u(Y_-)$ counts reducible trajectories on $Y_-$ from stable to unstable critical points.
\begin{prop}
The maps $\check{m}$, $\hat{m}$ and $\bar{m}$ are chain maps. The map induced in homology is independent of the choice of the metric and perturbation on $W$.
\end{prop}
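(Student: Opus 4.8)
The plan is to carry over, at the chain level, the construction of induced maps in Morse homology with boundary — the cobordism analogue of Exercise \ref{finiteLES} — with the extra input being the compactness and gluing theory for the four-dimensional Seiberg--Witten equations (\ref{SW4bl}) on the cylindrical-end manifold $W^*$. As always we work over $\ztwo$, so orientations and signs play no role; every identity below will be an equality of mod-two counts of points in zero-dimensional moduli spaces, deduced from counting the boundary points of compact one-dimensional moduli spaces.

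\textbf{The chain map property.} First I would fix critical points $[\mathfrak{b}_-]$ on $Y_-$ and $[\mathfrak{b}_+]$ on $Y_+$ and a relative homotopy class $z$ for which the expected dimension of $M_z(W^*,\spin_W,[\mathfrak{b}_-],[\mathfrak{b}_+])$ is one. After a generic interior perturbation this is a smooth one-manifold, and the key step is to show that its compactification $M_z^+$ is a compact one-manifold with boundary and to enumerate $\partial M_z^+$. A point of $\partial M_z^+$ is a broken configuration of one of three types: (i) a possibly broken trajectory on the incoming end $Y_-$ followed by a solution on $W^*$; (ii) a solution on $W^*$ followed by a possibly broken trajectory on $Y_+$; (iii) a triply-broken configuration in which one outer factor is a boundary-obstructed trajectory on $Y_-$ or $Y_+$, exactly as in Figure \ref{notsmale}. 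The relation $\#\partial M_z^+=0$, summed over all $z$ and all compatible pairs of critical points and organized according to the block decomposition (\ref{chain}), gives $\check\partial\,\check m=\check m\,\check\partial$, and likewise for $\hat m$ and $\bar m$. The off-diagonal entries of the matrices defining $\check m,\hat m,\bar m$ — for instance $m^u_o\bar\partial^s_u(Y_-)$ and $\partial^u_o(Y_+)\bar m^s_u$ — are precisely the terms recording the boundary-obstructed breakings of type (iii); this is the cobordism version of the identity (\ref{osuo}).

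\textbf{Independence of the choice on $W$.} Next I would take two admissible choices of metric and interior perturbation on $W$, both agreeing with the fixed data near the ends, join them by a generic path indexed by $t\in[0,1]$, and form the parametrized moduli spaces over this interval. Counting the zero-dimensional parametrized moduli spaces (those of expected dimension $-1$ at a fixed $t$) defines linear maps $\check K,\hat K,\bar K$; analyzing the boundary of the one-dimensional parametrized moduli spaces — whose strata are of types (i)--(iii) above together with the two endpoints $t=0$ and $t=1$ — yields chain-homotopy identities $\check m_1-\check m_0=\check\partial\,\check K+\check K\,\check\partial$ and their hat and bar analogues. Hence the map on homology is independent of the interior data on $W$; combining this with the continuation isomorphisms (\ref{continuation}) on the two boundary three-manifolds shows that the induced map is canonically defined as a map between the invariant groups $\HMt_*$, $\HMf_*$, $\HMb_*$.

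\textbf{Main obstacle.} The real content is the analytic input I would be taking for granted: the compactness theorem producing precisely the broken configurations in the list above, and the gluing theorem asserting that each such broken configuration is the limit of a unique end of the moduli space. In the blown-up picture this is delicate because of the boundary-obstructed strata, where the usual gluing analysis (a half-line of glued solutions) must be replaced by the more subtle boundary-obstructed gluing — this is exactly what forces the correction terms in the cobordism maps (\ref{indmap}) — and because the reducible locus on $W^*$ has to be treated on its own, using Proposition \ref{soleqbl} together with an analysis of the reducible moduli space and of the index of the Dirac operator on $W^*$ (which may jump). In a complete treatment this is the part that takes up the bulk of the relevant chapters of \cite{KM}; here I would simply quote it.
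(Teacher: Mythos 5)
Your proposal follows essentially the same route as the paper's sketch: prove the chain-map identity by enumerating the three types of boundary strata of compactified one-dimensional moduli spaces on $W^*$ (interior breaking on either end, plus the triply-broken boundary-obstructed configurations that account for the off-diagonal terms in $\check m,\hat m,\bar m$), and establish independence of the interior data via a parametrized moduli space over an interval of perturbations, yielding a chain homotopy. Both you and the paper defer the heavy analytic input (compactness and, in particular, boundary-obstructed gluing) to \cite{KM}, so the argument matches.
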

We say that the induced maps are the maps induced by the cobordism, and denote them by
\begin{equation*}
\HMt_*(W,\spin_W),\quad \HMf_*(W,\spin_W),\quad \HMb_*(W,\spin_W).
\end{equation*}
\begin{proof}
We only provide a sketch of the proof and refer to Chapter $26$ of \cite{KM} for details. The first statement follows from understanding the compactification of the one dimensional moduli spaces. For example if both endpoints are irreducible critical points, a sequence of solutions can converge (in a suitable sense) to one of the following objects:
\begin{itemize}
\item a pair $([\check{\gamma}_-],[\gamma])$ consisting of an unparametrized trajectory $[\check{\gamma}_-]$ on $\mathbb{R}\times Y_-$ converging to $[\mathfrak{b}_-]$ at $-\infty$ and to another irreducible configuration $[\mathfrak{b}]$ at $+\infty$ and a solution $[\gamma]$ on $W^*$ converging to $[\mathfrak{b}]$ on the incoming end and to $[\mathfrak{b}_+]$ on the outgoing end.
\item a pair $([\gamma],[\check{\gamma}_+])$ consisting of a solution $[\gamma]$ on $W^*$ converging to $[\mathfrak{b}_-]$ on the incoming end and to an irreducible critical point $[\mathfrak{b}]$ on the outgoing end and an unparametrized trajectory $[\check{\gamma}_+]$ on $\mathbb{R}\times Y_+$ converging to $[\mathfrak{b}]$ at $-\infty$ and to another irreducible configuration $[\mathfrak{b}_+]$ at $+\infty$.
\item a triple $([\check{\gamma}_-],[\gamma],[\check{\gamma}_+])$ where the solutions are as above and $[\check{\gamma}_-]$  converges to a stable critical point at $+\infty$ and $[\check{\gamma}_+]$ converges to an unstable critical point at $-\infty$.
\end{itemize}
One shows that the compactified moduli spaces have the structure of a compact topological one dimensional manifold, and the description of the boundary points above gives rise to the identity
\begin{equation*}
m^o_o\partial^o_o(Y_-)+\partial^o_o(Y_+)m^o_o+ \partial^u_o\bar{m}^s_u\partial^o_s=0.
\end{equation*}
This is the analogue of the identity (\ref{osuo}). By checking the various possibilities one then checks that the maps are actually chain maps.
\par
Suppose we are given two different choices of perturbations $\mathfrak{p}_0$ and $\mathfrak{p}_1$, and denote by $\check{m}_0$ and $\check{m}_1$ the two chain maps they induce. One can choose a one parameter family of perturbations $\mathfrak{p}_t$ for $t\in[0,1]$. We can consider the moduli spaces parametrized by this family, i.e. the union
\begin{equation*}
\bigcup_{t\in[0,1]} M_z(X^*, \spin_X, [\mathfrak{b}_{-}],[\mathfrak{b}_{+}])_{\mathfrak{p}_t}.
\end{equation*}
For a generic choice of the family, these are all transversely cut out. By looking at zero dimensional moduli spaces one can construct a linear map
\begin{equation*}
\check{H}: \check{C}_*(Y_-)\rightarrow \check{C}_*(Y_+)
\end{equation*}
and looking at the ends of compactified one dimensional moduli spaces one obtains the identity
\begin{equation*}
\check{\partial}(Y_+)\circ\check{H}+\check{H}\circ\check{\partial}(Y_-)=\check{m}_0+\check{m}_1.
\end{equation*}
Hence the chain maps $\check{m}_0$ and $\check{m}_1$ are chain homotopic.
\end{proof}
\begin{rem}\label{invrem}
The invariance result of Proposition \ref{invariance} follows by considering the map induced by the cobordism $I\times Y$ equipped with a pair of metric and perturbation interpolating the given two on $Y$ at the ends. This is the equivalent in our setting of the continuation map in usual Morse theory (see equation (\ref{continuation})). The composition property for these maps is a special case of the functoriality property in Proposition \ref{functoriality}, which we discuss more thoroughly in the next Section. 
\end{rem}

\vspace{1cm}
\section{The module structure, completions and functoriality}
In this section we introduce some additional aspects related to the maps induced by cobordisms.

\vspace{0.3cm}

\textbf{Cap products.}
The Floer homology groups have a natural module structure over the (singular) cohomology ring of $\mathcal{B}^{\sigma}(Y,\spin)$. These are the infinite dimensional analogues of the cap products induced on the homology of a smooth manifold
\begin{equation*}
H^k(M)\times H_m(M)\rightarrow H_{m-k}(M).
\end{equation*}
We first identify the cohomology of the moduli space of configurations.

\begin{prop}\label{homotopytype}
The blown-up moduli space of configurations $\mathcal{B}^{\sigma}(Y,\spin)$ has the homotopy type of $\mathbb{T}\times \mathbb{C}P^{\infty}$ where $\mathbb{T}$ is a torus of dimension $b_1(Y)$. In particular its cohomology is the ring (\ref{ring}).
\end{prop}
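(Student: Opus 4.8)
To prove Proposition \ref{homotopytype}, I would first reduce the problem to understanding the gauge action on the blown-up configuration space and then compute the homotopy type of the resulting quotient directly. The key is that $\mathcal{C}^{\sigma}(Y,\spin)$ deformation retracts onto a simple space: the space of spin$^c$ connections is an affine space over $\Omega^1(Y;i\mathbb{R})$, hence contractible, and the blown-up spinor part $\{(r,\psi): r\geq 0, \|\psi\|_{L^2}=1\}$ deformation retracts onto the unit sphere $S(\Gamma(S))$ in the $L^2$-completion of the space of spinors (by sending $r \to 0$). This unit sphere is contractible (it is the unit sphere in an infinite-dimensional Hilbert space), so $\mathcal{C}^{\sigma}(Y,\spin)$ is itself contractible. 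Therefore $\mathcal{B}^{\sigma}(Y,\spin) = \mathcal{C}^{\sigma}(Y,\spin)/\mathcal{G}(Y,\spin)$ is a classifying space $B\mathcal{G}(Y,\spin)$ for the gauge group, at least provided the action is suitably nice (free, with local slices)—and the action on the blow-up is free, as recorded just before Definition \ref{irrrred} and again in Section \ref{SWeq}.

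Next I would identify the homotopy type of the gauge group $\mathcal{G}(Y,\spin) = \mathrm{Map}(Y,S^1)$. Since $S^1 = K(\mathbb{Z},1)$, standard obstruction theory (or the evaluation fibration $\mathrm{Map}_0(Y,S^1) \to \mathrm{Map}(Y,S^1) \to S^1$ together with the fact that $\mathrm{Map}_0(Y,S^1)$ is a topological group) gives a (weak) homotopy equivalence
\begin{equation*}
\mathrm{Map}(Y,S^1) \simeq H^1(Y;\mathbb{Z}) \times \mathrm{Map}_*(Y,S^1),
\end{equation*}
and $\mathrm{Map}_*(Y,S^1)$ is connected with $\pi_k = H^k(Y;\mathbb{Z})$ for $k\geq 1$; since $Y$ is a closed oriented $3$-manifold, $H^1(Y;\mathbb{Z}) = \mathbb{Z}^{b_1}$ and $\mathrm{Map}_*(Y,S^1) \simeq (S^1)^{b_1} \times K(H^2(Y;\mathbb{Z}),2) \times K(\mathbb{Z},3)$ after splitting off the contractible pieces—actually, cleanly: $\mathcal{G}$ has the homotopy type of a product of $H^1(Y;\mathbb{Z})$ copies of $S^1$, one $K(\mathbb{Z},1)$ for the connected component of constants, and higher $K(\pi,n)$'s which I will have to argue do not contribute to $B\mathcal{G}$ in a way that changes the answer. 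Passing to classifying spaces, $B\mathcal{G}(Y,\spin)$ has the homotopy type of $\mathbb{T}^{b_1(Y)} \times \mathbb{C}P^{\infty}$, where the torus comes from $B$ of the discrete part $H^1(Y;\mathbb{Z})$ (so $B\mathbb{Z}^{b_1} = \mathbb{T}^{b_1}$) and the $\mathbb{C}P^{\infty} = BS^1 = K(\mathbb{Z},2)$ comes from $B$ of the $S^1$ of constant gauge transformations, i.e. from the $\mathbb{C}P^{\infty}$ factor already visible in the finite-dimensional blow-up model $(\mathbb{C}^n)^\sigma/S^1 \to \mathbb{C}P^{n-1}$ as $n\to\infty$.

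The cleaner route, and the one I would actually write, is to use the based gauge group: $\mathcal{B}_o^{\sigma}(Y,\spin) = \mathcal{C}^{\sigma}(Y,\spin)/\mathcal{G}_0(Y,\spin)$ is contractible since $\mathcal{G}_0$ acts freely on a contractible space with slices and $\mathcal{G}_0$ is itself contractible (it is the identity component's based version; $\mathrm{Map}_*(Y,S^1)$ minus its torus factor is weakly contractible because the higher homotopy/cohomology of $Y$ contributes nothing once we account correctly—here I need that $H^3(Y;\mathbb{Z})=\mathbb{Z}$ does contribute a $K(\mathbb{Z},3)$, which after looping and classifying becomes invisible in low degrees; a careful statement is that $B\mathcal{G}_0$ computes exactly the $\mathbb{C}P^\infty$ and $\mathbb{T}$). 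Then $\mathcal{B}^{\sigma}(Y,\spin) \simeq \mathcal{B}_o^{\sigma}(Y,\spin) /(\mathcal{G}/\mathcal{G}_0)$-Borel-style, and since the residual group $\mathcal{G}/\mathcal{G}_0 \cong S^1 \times H^1(Y;\mathbb{Z})$ (the $S^1$ of constants times the component group) acts on a contractible space, $\mathcal{B}^{\sigma}(Y,\spin) \simeq B(S^1 \times H^1(Y;\mathbb{Z})) = \mathbb{C}P^{\infty} \times \mathbb{T}^{b_1(Y)}$. The cohomology ring is then $H^*(\mathbb{C}P^\infty;\ztwo) \otimes H^*(\mathbb{T}^{b_1};\ztwo) = \ztwo[U] \otimes \Lambda_*(H^1(Y;\mathbb{Z})/\mathrm{Tors})$, which is exactly the ring \eqref{ring} after identifying $H^1/\mathrm{Tors}$ with $H_1/\mathrm{Tors}$ by Poincaré duality.

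\textbf{The main obstacle.} The genuinely delicate point is not the finite-dimensional bookkeeping but the functional-analytic setup: one must work with appropriate Sobolev completions $L^2_k$ so that $\mathcal{C}^{\sigma}(Y,\spin)$ is a genuine Hilbert manifold, $\mathcal{G}(Y,\spin)$ is a Hilbert Lie group acting smoothly, the action on the blow-up is free with local slices (the slice theorem), and the quotient is a Hilbert manifold whose homotopy type is computed by the Borel construction; and one must justify that the unit sphere in the $L^2_k$-spinors is contractible and that the blow-down retraction is well-behaved. Granting the slice theorem and the standard fact that infinite-dimensional spheres are contractible—both of which are treated in \cite{KM}—the argument is as above; I would cite \cite{KM} for the analytic foundations and only sketch the homotopy-theoretic identification.
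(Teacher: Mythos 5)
Your high-level strategy is sound and genuinely different from the paper's. Kronheimer–Mrowka (and the paper) argue via the Coulomb slice: they solve $\Delta\xi = d^*b$ by Hodge theory to show that $\{e^\xi : \int_Y \xi = 0\}$ acts with an explicit contractible slice $\mathcal{S}_\mathfrak{b} = \{(B_0+b,r,\psi) : d^*b = 0\}$, so that $\mathcal{B}^\sigma(Y,\spin)$ is the quotient of a contractible space by the residual group $\mathcal{G}/\{e^\xi : \int_Y \xi = 0\}$, which is (non-canonically) $S^1 \times H^1(Y;\mathbb{Z})$. You instead argue abstractly: $\mathcal{C}^\sigma$ is contractible, $\mathcal{G}$ acts freely with slices, so $\mathcal{B}^\sigma \simeq B\mathcal{G}$, and you compute the homotopy type of $\mathcal{G} = \mathrm{Map}(Y,S^1)$. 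Both arguments land at $B(S^1 \times \mathbb{Z}^{b_1})$. The paper's route avoids any mapping-space homotopy theory at the price of introducing Hodge theory; yours is the purely homotopy-theoretic version and is arguably cleaner once the slice theorem has been invoked.

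However, your intermediate homotopy-theoretic statements are not correct, and you should fix them. First, $\mathcal{G}_0 = \mathrm{Map}_*(Y,S^1)$ is \emph{not} contractible: using the adjunction $\pi_k(\mathrm{Map}_*(Y,K(\mathbb{Z},1))) = [\Sigma^k Y, K(\mathbb{Z},1)]_* = \tilde H^{1-k}(Y;\mathbb{Z})$, one gets $\pi_0 = H^1(Y;\mathbb{Z}) \cong \mathbb{Z}^{b_1}$ and $\pi_k = 0$ for $k \geq 1$. So $\mathcal{G}_0$ has the homotopy type of the \emph{discrete} group $\mathbb{Z}^{b_1}$, and consequently $\mathcal{B}_o^\sigma = \mathcal{C}^\sigma/\mathcal{G}_0 \simeq B\mathbb{Z}^{b_1} \simeq \mathbb{T}^{b_1}$, not a point (as the paper itself implicitly uses when it calls $\mathcal{B}_o$ "the manifold $M$"). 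Second, the residual group $\mathcal{G}/\mathcal{G}_0$ is just the evaluation circle $S^1$, not $S^1 \times H^1(Y;\mathbb{Z})$ — the component group of $\mathcal{G}$ is carried entirely by $\mathcal{G}_0$, whose quotient by the identity component is $\pi_0(\mathcal{G}_0) = H^1(Y;\mathbb{Z})$. Your two errors happen to cancel (you put a torus into the wrong group and remove it from where it belongs), and your parenthetical about $K(H^2(Y;\mathbb{Z}),2)$ and $K(\mathbb{Z},3)$ factors is spurious — for the target $K(\mathbb{Z},1)$ nothing in those degrees survives. The correct clean version of your argument is one line: the evaluation fibration $\mathrm{Map}_*(Y,S^1) \to \mathcal{G} \to S^1$ is split by the constants and $\mathrm{Map}_*(Y,S^1) \simeq \mathbb{Z}^{b_1}$, so $\mathcal{G} \simeq S^1 \times \mathbb{Z}^{b_1}$ and $\mathcal{B}^\sigma \simeq B\mathcal{G} \simeq \mathbb{C}P^\infty \times \mathbb{T}^{b_1}$.
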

\begin{proof}
Fix a reducible base configuration $\mathfrak{b}=(B_0,0,0)$. We define the Coulomb slice $\mathcal{S}_{\mathfrak{b}}$ through $\mathfrak{b}$ as the space of configurations $(B_0+b, r, \psi)$ so that 
\begin{equation*}
d^*b=0.
\end{equation*}
Given any configuration $(B_0+b, r, \psi)$ there is exactly one gauge transformation of the form $u=e^{\xi}$ with $\int_Y\xi=0$ so that the action lies on the Coulomb slice. This boils down to solving the equation
\begin{equation*}
\Delta \xi= d^*b
\end{equation*}
which has exactly one solution with $\int_Y\xi=0$ by standard Hodge theory. In particular we can identify $\mathcal{B}^{\sigma}(Y,\spin)$ as the quotient of $\mathcal{S}_{\mathfrak{b}}$ by the action of the group
\begin{equation*}
\mathcal{G}(Y,\spin)/\{e^{\xi}\lvert \int_Y\xi=0\}
\end{equation*}
This is identified (non-canonically) with $S^1\times H^1(Y;\mathbb{Z})$, see Exercise \ref{cohom}. As $\mathcal{S}_{\mathfrak{b}}$ is a contractible space, the result follows.
\end{proof}

We only provide a heuristic definition of the cap products. First recall the case of a closed finite dimensional manifold $M$. Suppose one can represent the Poincar\'e dual of a class $\alpha\in H^k(M)$ by a closed smooth submanifold $P_{\alpha}$ (which has henceforth dimension $m-k$). Consider two critical points $a$ and $b$ whose indices differ by $k$. The space of trajectories $M(a,b)$ is a smooth manifold of dimension $k$, and for a generic choice of the metric and Morse function we can assume that it intersects $P_{\alpha}$ transversely in a finite number of points. More precisely one should consider the closure of the space of trajectories, but the intersections will all be involving $M(a,b)$ itself because of transversality. Then the map
\begin{align*}
C_*&\rightarrow C_{*-k}\\
a&\mapsto \sum \#(M(a,b)\cap P_{\alpha})b
\end{align*}
is a chain map and the induced map is the cap product with $\alpha$. Analogously, we can fix a cocycle representative of $\alpha$ and evaluate it on $M(a,b)$.
\par
The same construction can be performed for the maps induced by a cobordism $W$. We can define for example the linear map
\begin{equation*}
m^o_o(U^k): C^o\rightarrow C^o
\end{equation*}
counting the intersections of $2k$ dimensional moduli spaces connecting irreducible critical points with a geometric representative of the Poincar\'e dual of $U^k$ in the singular cohomology ring of $\mathcal{B}^{\sigma}(Y,\spin)$.
The latter in this case is easy to describe, as the spinor component has the homotopy type of $\mathbb{C}P^{\infty}$. In particular, for $N$ big enough, the dual of $U^k$ is represented by a $N-k$-dimensional complex projective subspace and this description is compatible with stabilizations. Using formulas analogous to those defining the chain maps in equation (\ref{indmap}), we obtain the chain maps $\check{m}(U^k),\hat{m}(U^k)$ and $\bar{m}(U^k)$ defining the cap product structure.

\begin{exm}\label{modS3}
We focus on the case of $S^3$ described in Proposition \ref{S3}. We have already established a group isomorphism, so we need to determine the action of $U$. The union of the flow lines between two critical points corresponding to consecutive eigenvalues is a complex projective line with two points removed  (see the discussion in Exercise \ref{traj}). In particular (under genericity assumptions) it intersects the Poincar\'e dual of $U$ in exactly one point, and the result follows.
\end{exm}

\vspace{0.5cm}

\textbf{Completions and functoriality.} In order to discuss compositions (and hence functoriality) we need to introduce suitable completions of the monopole Floer groups. Indeed, in the previous section we only have dealt with the map induced by a single spin$^c$ structure on the cobordism $W$. In general, there might be infinitely spin$^c$ structures on $W$ for which the induced map does not vanish (for an explicit example, see the proof of Proposition \ref{fro}). In particular, the sum over all the spin$^c$ structures of the induced maps (see equation (\ref{totalmap})) is in general not well-defined. 
\par
The Floer homology groups can be endowed with a natural decreasing filtration induced by the relative grading as follows. If $\spin$ is not torsion, the filtration is trivial. If $\spin$ is torsion, there is a relative $\mathbb{Z}$-grading which can be lifted to a non-canonical absolute grading by choosing any critical point. The filtration is the one induced by the grading. We define the \textit{completed monopole Floer homology groups}, denoted by
\begin{equation*}
\HMt_{\bullet}(Y,\spin),\quad\HMf_{\bullet}(Y,\spin),\quad\HMb_{\bullet}(Y,\spin),
\end{equation*}
as the completion of the monopole Floer groups with respect to this filtration.
\begin{exm}
Following Proposition \ref{S3}, we have the identifications
\begin{align*}
\HMt_{\bullet}(S^3)&\cong \ztwo[U^{-1},U]]/\ztwo[[U]]\\
\HMf_{\bullet}(S^3)&\cong \ztwo[[U]]\langle-1\rangle\\
\HMb_{\bullet}(S^3)&\cong \ztwo[U^{-1},U]].
\end{align*}
The last group is the group of Laurent power series. Notice that the  completion does not affect the \textit{to} groups, as it vanishes in degrees low enough.
\end{exm}
The total Floer groups (\ref{total}) are obtained by taking the direct sum over all spin$^c$ structures. It is then a consequence of the compactness properties of the Seiberg-Witten equations that the total map (\ref{totalmap}) is well defined.
\\
\par
The proof of the functoriality property in Proposition \ref{functoriality}, which underlies the invariance of the Floer groups (see Remark \ref{invrem}) and the fact that the cap product defined above is actually a module structure, follows from a neck stretching argument (see Figure \ref{neck}). We can form the cobordism $W_T$ obtained from the composition $W_2\circ W_1$ by inserting a cylinder $[-T,T]\times Y_1$. Of course, all these cobordisms are diffeomorphic, but the metric is different. In particular, a family of solutions on the manifold with cylindrical ends $(W_T)^*$ for $T$ going to $+\infty$ converges in a suitable sense to a concatenations of solutions on $(W_1)^*$, $(W_2)^*$ and trajectories on $Y_0,Y_1$ and $Y_2$. This allows to identify the map induced by the composition in terms of the maps induced by the two cobordisms, see Chapter $26$ in \cite{KM} for details.

\begin{figure}
  \centering
\def\svgwidth{\textwidth}
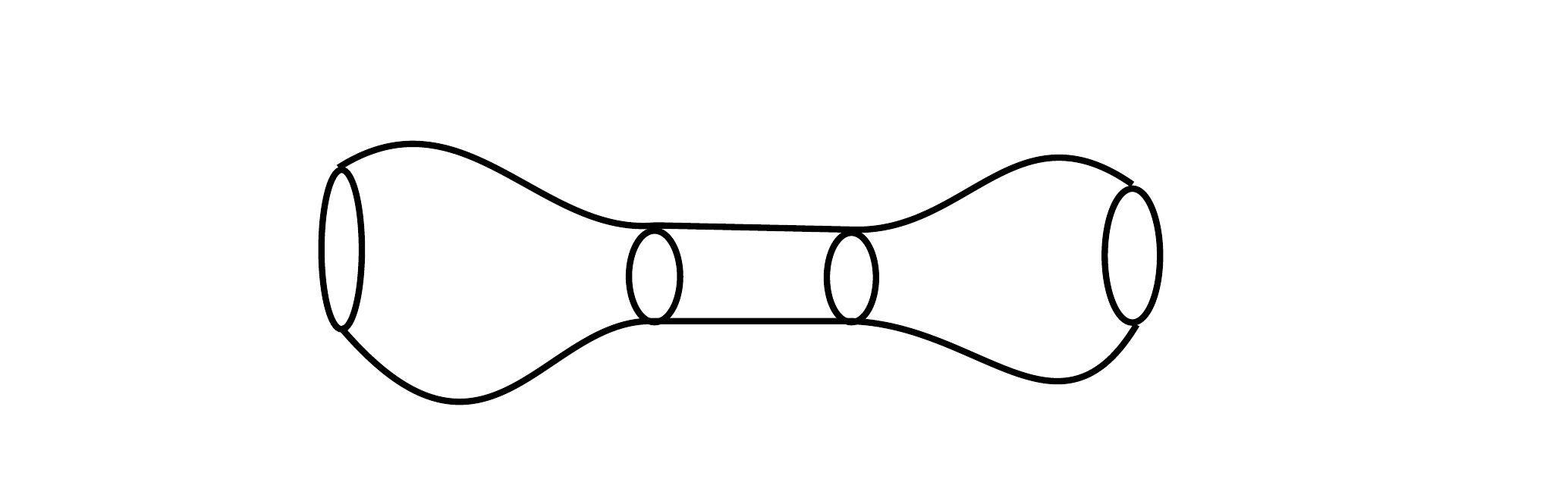
    \caption{The cobordism $W_T$ obtained by inserting a long neck $[-T,T]\times Y_1$.}
    \label{neck}
\end{figure}

\vspace{1cm}
\section{Absolute gradings and the Fr\o yshov invariant}

\textbf{Absolute rational gradings. }Suppose the spin$^c$ structure is torsion. We have seen that under this assumption a relative $\mathbb{Z}$-grading can be defined on the Floer homology groups. We show how this can be refined to an absolute rational grading which is nicely compatible with the maps induced by cobordisms.
\\
\par
We first review a little algebraic topology. For a manifold with boundary $W$ there is a non-degenerate pairing
\begin{equation*}
H^2(W,\partial W;\mathbb{Q})\times H^2(W;\mathbb{Q})\rightarrow H^4(W,\partial W;\mathbb{Q})\rightarrow \mathbb{Q}
\end{equation*}
given by composing the cup product with the evaluation on the relative fundamental class. This induces a non-degenerate quadratic form on 
\begin{equation*}
I(W)=\mathrm{im}\left\{H^2(W,\partial W;\mathbb{Q})\rightarrow H^2(W;\mathbb{Q})\right\},
\end{equation*}
the image of the restriction map. We denote the signature of this quadratic form by $\sigma(W)$. Furthermore for a cobordism $W$ between $Y_-$ and $Y_+$ we define the characteristic number 
\begin{equation*}
\iota(W)=\frac{1}{2}(\chi(W)+\sigma(W)+b_1(Y_-)-b_1(Y)).
\end{equation*}
\begin{Exercise}
Check that $\iota(W)$ is an integer and it is additive under composition.
\end{Exercise}

Suppose we are given a pair $(Y,\spin)$ with $\spin$ torsion, and choose a spin$^c$ four manifold $(X,\spin_X)$ whose boundary is $(Y,\spin)$. Consider the class $c_1(S^+)$ which lies in $H^2(X,\mathbb{Z})$. As $\spin$ is torsion, the image of this class in $H^2(X,\mathbb{Q})$ is in the vector space $I(X)$ defined above, so that we can define the self-intersection number $c_1(S^+)^2\in\mathbb{Q}$.
\begin{Exercise}
Consider the tubular neighborhood $N(-p)$ of a sphere of self-intersection $-p$. The boundary is naturally identified with $L(p,1)$. Compute the self intersection numbers for the classes in $H^2(N(-p);\mathbb{Z})$.
\end{Exercise}

We can remove  a ball from $X$ and consider it as a spin$^c$ cobordism $W$ from $S^3$ to $Y$. On $S^3$ (endowed with the standard metric and a small perturbation) we have the stable critical point $[\mathfrak{a}_0]$ corresponding to the lowest degree generator of $\HMt_*(S^3)$, see Example \ref{exS3}.
\begin{defn}
For a critical point $[\mathfrak{a}]$ on $(Y,\spin)$ with $\spin$ torsion we define its absolute grading to be
\begin{equation*}
\mathrm{gr}^{\mathbb{Q}}=-\mathrm{gr}_z([\mathfrak{a}_0],W,[\mathfrak{a}])+\frac{1}{4} c_1(S^+)^2-\iota(W)-\frac{1}{4}\sigma(W)
\end{equation*}
for any choice of homotopy class $z$ on $W$, where $\mathrm{gr}_z([\mathfrak{a}_0],W,[\mathfrak{a}])$ is the formal dimension of the moduli space of solutions connecting $[\mathfrak{a}_0]$ to $[\mathfrak{a}]$ in the homotopy class $z$.
\end{defn}
\begin{rem}
This definition is slightly different from the convention in Heegaard Floer homology (see \cite{OSd}). In particular, that is obtained from ours by subtracting $b_1(Y)/2$.
\end{rem}

To see that the absolute grading is well defined, we notice that the quantities involved in the expression are all additive under composition, so that we can reduce ourself to show that the right hand side is zero on a closed four manifold $X$. This boils down to the usual formula for the dimension of the Seiberg-Witten moduli spaces
\begin{equation*}
d=\frac{1}{4}\left(c_1(S^+)^2-2\chi(X)-3\sigma(X)\right).
\end{equation*}
\begin{exm}It is straightforward from the definition (by taking $X$ to be $B^4$) that the lowest degree generator of $\HMt_*(S^3)$ has degree zero.\end{exm}

The following properties will be useful when discussing the Fr\o yshov invariant.
\begin{prop}\label{gradingprop}
The absolute gradings satisfy the following properties.
\begin{enumerate}
\item If $Y$ is a homology sphere and $[\mathfrak{a}]$ is a stable reducible critical point then $\mathrm{gr}^{\mathbb{Q}}([\mathfrak{a}])$ is an even integer.
\item If $\spin_0$ and $\spin_1$ are torsion, the associated map
\begin{equation*}
\HMt_*(W,\spin):\HMt_*(Y_0,\spin_0)\rightarrow \HMt_*(Y_1,\spin_1)
\end{equation*}
has degree given by the rational number
\begin{equation*}
\frac{1}{4}c_1(S^+)^2-\iota(W)-\frac{1}{4}\sigma(W).
\end{equation*}
\item When $\spin$ is torsion, the duality isomorphism
\begin{equation*}
\check{\omega}:\HMt_*(-Y)\rightarrow \HMf^*(Y)
\end{equation*}
maps elements of grading $j$ to elements of grading $-1-b_1(Y)-j$.
\end{enumerate}
\end{prop}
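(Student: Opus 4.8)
\emph{Overview.} The plan is to treat the three parts in order of increasing difficulty: (2) is essentially formal, (1) reduces to a parity statement on a filling, and (3) is a finite-dimensional Poincar\'e--Lefschetz duality computation together with a normalization check.

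\emph{Part (2).} Choose a spin$^c$ filling $(X_0,\spin_{X_0})$ of $(Y_0,\spin_0)$ and, removing a ball, view it as a cobordism $W_0\colon S^3\to Y_0$; then $W_1=W\circ W_0$ is a cobordism $S^3\to Y_1$. The chain map computing $\HMt_*(W,\spin)$ counts points in the zero-dimensional moduli spaces $M_z(W^*,\spin,[\mathfrak a],[\mathfrak b])$, i.e.\ those with $\mathrm{gr}_z([\mathfrak a],W,[\mathfrak b])=0$. Expanding $\mathrm{gr}^{\mathbb Q}([\mathfrak a])$ through $W_0$ and $\mathrm{gr}^{\mathbb Q}([\mathfrak b])$ through $W_1$ and invoking the additivity under composition of the formal dimension $\mathrm{gr}_z$, of $\chi$, of the signature $\sigma(\cdot)$ of the form on $I(\cdot)$, of $\iota(\cdot)$ (the Exercise following its definition), and of $c_1^2$ (well defined because $\spin_i$ is torsion, so $c_1(S_W^+)$ restricts to a torsion class on $Y_1$), one obtains
\[
\mathrm{gr}^{\mathbb Q}([\mathfrak b])=\mathrm{gr}^{\mathbb Q}([\mathfrak a])+\tfrac14 c_1(S_W^+)^2-\iota(W)-\tfrac14\sigma(W)
\]
exactly on the generators hit by the chain map, which is the claimed degree. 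The only steps needing care are the additivity of the relative grading and of $c_1^2$ across $Y_1$; I would cite Chapter $26$ of \cite{KM}.

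\emph{Part (1).} First, integrality, for \emph{any} critical point on a homology sphere: take the filling $X$ above simply connected (surger out $\pi_1$), so $I(X)=H^2(X;\mathbb Q)$, $\sigma(W)=\sigma(X)$, and $c_1(S_X^+)$ is characteristic; by van der Blij's lemma $c_1^2\equiv\sigma(X)\pmod 8$, hence $\tfrac14 c_1^2-\tfrac14\sigma(W)\in 2\mathbb Z$, and since $\iota(W)\in\mathbb Z$ and every $\mathrm{gr}_z$ is an integer, $\mathrm{gr}^{\mathbb Q}([\mathfrak a])\in\mathbb Z$. For evenness of a boundary-stable reducible, I would use that on a homology sphere the boundary-stable reducibles form a single tower with consecutive gradings differing by $2$ (the $U$-action moves one eigenvalue-step, as in the $\mathbb{C}P^\infty$ model), so they all have one parity; since in high degrees $i_*$ is an isomorphism (Proposition \ref{rational}) and there these reducibles represent the generators of $i_*(\HMb_*(Y))=\ztwo[U^{-1},U]/\ztwo[U]\langle -2h\rangle$, which — given the normalization placing the bottom generator of $\ztwo[U^{-1},U]/\ztwo[U]$ in degree $0$ and the integrality $h(Y)\in\mathbb Z$ — sit in degrees $\{-2h(Y)+2m:m\ge 0\}$, the common parity is even. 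A self-contained alternative computes $\mathrm{gr}_z([\mathfrak a_0],W,[\mathfrak a])$ between two boundary-stable reducibles directly from a reducible solution on $W$, where it is twice a Dirac index plus the (even) dimension of the reducible stratum; this parity computation is the main obstacle if one wants to avoid invoking $h(Y)\in\mathbb Z$.

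\emph{Part (3).} The map $\check\omega$ is the Floer-homological incarnation of the chain-level identification $\hat C_*(-f)\cong\check C^*(f)$ induced by $\mathcal L(-Y)=-\mathcal L(Y)$ — that is, Poincar\'e--Lefschetz duality $H_*(\mathcal B^\sigma(Y,\spin))\cong H^*(\mathcal B^\sigma(Y,\spin),\partial\mathcal B^\sigma(Y,\spin))$ — and on the finite-dimensional model $\mathbb T^{b_1}\times\mathbb{C}P^N$ of Proposition \ref{homotopytype} it sends degree $j$ to degree $(b_1+2N)-j$. The $-j$ is the orientation reversal intrinsic to duality; what remains is to identify the additive constant after renormalizing $N\to\infty$. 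The $b_1$-torus factor contributes $-b_1(Y)$ and the projective factor a universal $-1$ (the same shift that produces the $\langle -1\rangle$ in $\HMf_*(S^3)=\ztwo[U]\langle -1\rangle$), giving net shift $-1-b_1(Y)$; I would pin the constant down by checking the statement on $Y=S^3$ via Proposition \ref{S3} and on $S^1\times S^2$ (to isolate the torus term), and by checking that $\check\omega$ intertwines the cobordism maps with their transposes so the degrees of part (2) match. The delicate point is matching this duality shift with the normalization of the absolute grading — defined through $S^3$ and the correction terms $\tfrac14 c_1^2-\iota-\tfrac14\sigma$ — so that the truncation-dependent $2N$ cancels and the residual constant is exactly $-1-b_1(Y)$.
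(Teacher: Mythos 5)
Your part (2) is essentially the argument the paper has in mind: additivity of $\mathrm{gr}_z$, $\chi$, $\sigma$, $\iota$ and $c_1^2$ under composition is exactly what makes the absolute grading well defined, and the paper gives no separate argument for part (2) beyond observing the index $-\iota(W)$ of the form operator below.

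For part (1), however, your primary route is circular. The integrality $h(Y)\in\mathbb Z$ for a homology sphere is a \emph{consequence} of part (1): the bottom of the tower in $\HMt_*$ sits at degree $-2h$, and part (1) is precisely the statement that this degree is an even integer. You cannot invoke it as an input. You acknowledge this and offer a self-contained alternative, but there you assert that the non-Dirac contribution to $\mathrm{gr}_z([\mathfrak a_0],W,[\mathfrak a])$ is the ``(even) dimension of the reducible stratum''; that is not correct. At a reducible solution on $W^*$ the linearization splits into a complex Dirac operator (even real index) and a perturbation of $d^*\oplus d^+:\Omega^1(W^*)\to\Omega^0(W^*)\oplus\Omega^+(W^*)$, whose index is $-\iota(W)$; this is an integer but is generically odd (for $W=\mathbb{C}P^2\setminus(B^4\sqcup B^4)$ one has $\iota(W)=\tfrac12(\chi+\sigma)=1$). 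The evenness of $\mathrm{gr}^{\mathbb Q}$ does not come from this form piece being even, but from the $-\iota(W)$ contribution to $\mathrm{gr}_z$ cancelling the explicit $-\iota(W)$ in the grading formula, leaving $-(\text{even Dirac index}) + \bigl(\tfrac14 c_1^2 - \tfrac14\sigma(W)\bigr)$, with the second bracket even by van der Blij since the boundary is a homology sphere. That cancellation is the missing step; once you insert it your ``alternative'' becomes the paper's argument.

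For part (3) the paper does not sketch a proof at all. Your route through the finite-dimensional model $\mathbb T^{b_1}\times\mathbb{C}P^N$, followed by normalization checks on $S^3$ and a $b_1$-witness, is reasonable in spirit, but as written it replaces the required index-theoretic computation with consistency checks; the constant $-1-b_1(Y)$ should ultimately be extracted from a duality of the linearized operators on the cylinder rather than from examples.
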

\begin{proof}
We sketch the proof of part $(1)$ and $(2)$. We discuss the various terms appearing in the definition of the grading. Because the boundary is a homology sphere, the intersection form of $W$ is unimodular, hence the square of any characteristic vector is the signature modulo $8$ (see for example \cite{Kir}). Thus
\begin{equation*}
\frac{1}{4}c_1(S^+)^2-\frac{1}{4}\sigma(W)
\end{equation*}
is an even integer. The quantity $\mathrm{gr}_z([\mathfrak{a}_0],W,[\mathfrak{a}])$ can be computed as the index of the linearization of the Seiberg-Witten equations at a configurations connecting $[\mathfrak{a}_0]$ to $[\mathfrak{a}]$. We can choose this to be reducible by hypothesis. In this case linearization has two parts, one given by a Dirac operator (which is a complex operator, hence it has even real index) and an operator obtained by perturbing
\begin{equation*}
d^*+d^+: \Omega^1(W^*)\rightarrow \Omega^0(W^*)\oplus \Omega^+(W^*),
\end{equation*}
which has index $-\iota(W)$.\end{proof}

\begin{Exercise}
When using this grading to define a grading on the actual chain complexes, a little extra care has to be taken, see Remark \ref{gradingchain}. In particular, show that the highest grading in $\HMf_*(S^3)$ is $-1$.
\end{Exercise}

\vspace{0.5cm}

\textbf{The Fr\o yshov invariant. }In order to make sense of Definition \ref{froyshov}, the key result needed is Proposition \ref{rational}. As it only involves reducible solutions, its proof follows from Example \ref{exS3} and \ref{modS3}. They key property satisfied by the Fr\o yshov invariant is the following.
\begin{prop}[\cite{Fro}]\label{fro}
Suppose $Y_0$ and $Y_1$ are two homology spheres and $W$ is a cobordism between them with negative definite intersection form $Q$. Let 
\begin{equation*}
\rho(Q)=\frac{1}{8}(\mathrm{rank}(Q)-\mathrm{inf}_c|Q(c)|)
\end{equation*}
where the infimum is taken over all characteristic vectors. Then $h(Y_0)\geq h(Y_1)+\rho(Q)$.
\end{prop}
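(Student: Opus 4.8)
The plan is to extract the inequality from the behavior of the reducible part of the cobordism map $\HMb_\bullet(W)$, using the compatibility of the absolute $\mathbb{Q}$-grading with cobordism maps (Proposition \ref{gradingprop}) together with the computation of $\HMb_\bullet$ for homology spheres (Propositions \ref{rational} and \ref{S3}). The first step is to record what we know from the structure theory: for a homology sphere $Y$, Proposition \ref{rational} gives $\HMb_*(Y,\spin)\cong\ztwo[U^{-1},U]$ up to grading shift, and by Definition \ref{froyshov} together with the remark preceding it, the image of $i_*\colon\HMb_*(Y)\to\HMt_*(Y)$ is $\ztwo[U^{-1},U]/\ztwo[U]\langle -2h(Y)\rangle$. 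Since $i_*$ is a $\ztwo[U]$-module map, the relevant information is packaged in the grading of the "bottom surviving element": the lowest-degree element of $\HMb_*(Y)$ that maps nontrivially into $\HMt_*(Y)$, and this degree is $-2h(Y)$ (here I'm using that for a homology sphere $h$ is an integer and the gradings in $\HMt_*(S^3)$ start at $0$). Working with the completed groups $\HMb_\bullet$, $\HMt_\bullet$ does not change any of this.

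Next, I would analyze the map $\HMb_\bullet(W,\spin_W)$ for a spin$^c$ structure $\spin_W$ on $W$. Because $W$ is negative definite with unimodular form (both ends being homology spheres), every cohomology class restricts trivially to the boundary, so $c_1(S^+)\in H^2(W;\mathbb{Z})$ is characteristic and $c_1(S^+)^2=Q(c)$ for the corresponding characteristic vector $c$, a negative number. By part (2) of Proposition \ref{gradingprop}, the induced map shifts degree by
\begin{equation*}
\frac{1}{4}c_1(S^+)^2-\iota(W)-\frac{1}{4}\sigma(W).
\end{equation*}
Since $W$ is negative definite, $\sigma(W)=-\operatorname{rank}(Q)$, $\chi(W)=\operatorname{rank}(Q)$ (as the only homology of $W$ is in degree $2$, using that the ends are homology spheres), hence $\iota(W)=\frac12(\chi(W)+\sigma(W))=0$. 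So the degree shift is $\tfrac14 Q(c)+\tfrac14\operatorname{rank}(Q)$. Minimizing $-Q(c)=|Q(c)|$ over characteristic vectors, the \emph{largest} possible degree shift over all spin$^c$ structures is $\tfrac14(\operatorname{rank}(Q)-\inf_c|Q(c)|)=\tfrac12\rho(Q)$... wait, let me recompute: $\tfrac14\operatorname{rank}(Q)-\tfrac14\inf_c|Q(c)| = \tfrac14(\operatorname{rank}(Q)-\inf_c|Q(c)|)$, and $\rho(Q)=\tfrac18(\operatorname{rank}(Q)-\inf_c|Q(c)|)$, so the maximal shift is $2\rho(Q)$. (Here I use that $\operatorname{rank}(Q)-|Q(c)|$ is divisible by $8$ for unimodular definite $Q$, so $\rho(Q)$ is a nonnegative integer and the shift is even, consistent with part (1).)

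The core of the argument is then: the total map $\HMb_\bullet(W)=\bigoplus_{\spin_W}\HMb_\bullet(W,\spin_W)$ intertwines $i_*$ with the cobordism maps on $\HMt_\bullet$, and on $\HMb_\bullet\cong\ztwo[U^{-1},U]]$ the map $\HMb_\bullet(W,\spin_W)$ is (up to the grading shift computed above) a $\ztwo[[U]]$-module isomorphism — this is because $\HMb$ only sees reducibles, and on reducibles the cobordism map is modeled on a Dirac-type index computation which is surjective for dimension reasons; concretely, $\HMb_\bullet(W,\spin_W)$ sends the generator in degree $k$ to the generator in degree $k + (\text{shift})$. Tracking the bottom surviving element: the element of $\HMb_\bullet(Y_0)$ in degree $-2h(Y_0)$ maps into $\HMt_\bullet(Y_0)$ nontrivially, hence (by the commuting square and the fact that $\HMt_\bullet(W)$ is a module map respecting the $U$-tower) its image in $\HMb_\bullet(Y_1)$, which sits in degree $-2h(Y_0)+(\text{shift})$, must also map nontrivially into $\HMt_\bullet(Y_1)$. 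Therefore $-2h(Y_0)+(\text{shift})\geq -2h(Y_1)$, i.e. $h(Y_0)\geq h(Y_1)-\tfrac12(\text{shift})$. Choosing the spin$^c$ structure realizing the maximal shift $2\rho(Q)$ gives $h(Y_0)\geq h(Y_1)-\rho(Q)$ — the wrong sign, so I need to instead choose the spin$^c$ structure with the \emph{minimal} shift, or run the comparison in the other direction: the point is that \emph{for every} $\spin_W$ the shift is $\leq 2\rho(Q)$, and running the surviving-element comparison through whichever $\spin_W$ minimizes the shift (shift $\geq 0$, since $|Q(c)|\leq\operatorname{rank}(Q)$ need not hold — actually one must be careful here) is not quite what's wanted; rather, one uses that the element hit in $\HMb_\bullet(Y_1)$ via $\spin_W$ lies above degree $-2h(Y_0)+\tfrac14(Q(c)+\operatorname{rank}Q)$ and this must exceed $-2h(Y_1)$, giving $h(Y_1)\geq h(Y_0)-\tfrac18(\operatorname{rank}Q+Q(c))$; optimizing over $c$ to make the bound tightest yields $h(Y_0)\geq h(Y_1)+\rho(Q)$ after rearranging, since $\inf_c|Q(c)|$ minimizes $-Q(c)$ hence maximizes $Q(c)+\operatorname{rank}Q$... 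I will need to pin down the direction of the $U$-tower maps carefully.

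The main obstacle I anticipate is precisely this bookkeeping of signs and directions: one must correctly identify which end of the $U$-tower is preserved by $i_*$ and by the cobordism-induced map on $\HMb$, and then verify that the grading shift enters the inequality with the sign that produces $h(Y_0)\geq h(Y_1)+\rho(Q)$ rather than its reverse. Concretely this requires (a) knowing that $\HMb_\bullet(W,\spin_W)$ is an isomorphism of $\ztwo[[U]]$-modules after grading shift (a reducible-only transversality/index statement, which follows from the four-dimensional Seiberg–Witten moduli space description for reducibles), and (b) knowing that $\HMt_\bullet(W)$ maps the surviving tower of $Y_0$ into the surviving tower of $Y_1$ in a grading-nondecreasing way compatible with $i_*$. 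Once these structural facts are in hand, the inequality is a one-line consequence of comparing the degree $-2h(Y_0)$ with its image under the optimal spin$^c$ structure. I would also double-check the claim $\iota(W)=0$ and the divisibility $\operatorname{rank}(Q)\equiv|Q(c)|\pmod 8$ (Elkies/van der Blij, via \cite{Kir}), which guarantee $\rho(Q)\in\mathbb{Z}_{\geq0}$ and that all gradings in sight are integers.
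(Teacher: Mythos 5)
Your proposal follows essentially the same route as the paper: reduce to tracking the reducible-only map $\HMb_\bullet(W,\spin)$, show it is a grading-shifting isomorphism of $\ztwo[[U]]$-modules via the four-dimensional Dirac index computation (the reducible moduli spaces are projectivized Dirac kernels, as in Exercise \ref{traj}), compute the degree shift from Proposition \ref{gradingprop}(2), and push the conclusion through the commuting square with $i_*$, optimizing over spin$^c$ structures. You also correctly computed $\iota(W)=0$ and the divisibility of $\operatorname{rank}Q-|Q(c)|$ by $8$, and your shift $\tfrac14\bigl(Q(c)+\operatorname{rank}Q\bigr)$ matches Proposition \ref{gradingprop}(2) (the paper's in-proof formula $(b_2(W)-c_1(S^+)^2)/4$ appears to carry a sign typo; your version is the consistent one).

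The one genuine gap is the step you yourself flagged. The claim that the image of the tower-bottom element of $Y_0$ ``must also map nontrivially into $\HMt_\bullet(Y_1)$'' does not follow from the commuting square: $\HMt_\bullet(W,\spin)$ can certainly kill a nonzero element of $\HMt_\bullet(Y_0)$. The argument that actually works is dual. Take an element $y\in\HMb_\bullet(Y_0)$ one step \emph{below} the tower bottom, so $i_*^{Y_0}(y)=0$ by the very definition of $h(Y_0)$. Commutativity then forces $i_*^{Y_1}\bigl(\HMb_\bullet(W,\spin)(y)\bigr)=\HMt_\bullet(W,\spin)\bigl(i_*^{Y_0}(y)\bigr)=0$. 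Since $\HMb_\bullet(W,\spin)$ is an isomorphism, $\HMb_\bullet(W,\spin)(y)$ is a nonzero element of $\HMb_\bullet(Y_1)$ in the kernel of $i_*^{Y_1}$, hence its degree lies strictly below $-2h(Y_1)$. This gives $-2h(Y_0)-2+\delta< -2h(Y_1)$; since $\delta$ is an even integer and $h(Y_i)\in\mathbb{Z}$, this is $\delta\le 2\bigl(h(Y_0)-h(Y_1)\bigr)$, and maximizing $\delta$ over characteristic vectors produces $h(Y_0)\ge h(Y_1)+\rho(Q)$. You should also record, as the paper does at the outset, that one first performs surgery on $W$ to arrange $b_1(W)=0$ without changing $Q$; your statement that ``the only homology of $W$ is in degree $2$'' relies on that reduction (even then $b_3(W)=1$ from Lefschetz duality, though $\chi(W)=b_2(W)$ still holds), and the uniqueness and regularity of the abelian solution $F^+_{A^t}=0$ on the cylindrical-end $W^*$ used to identify the reducible moduli space requires $b_1(W)=0$ and $b^+(W)=0$.
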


\begin{Exercise}\label{E8}
The Poincar\'e homology sphere $P$ (with the correct choice of orientation) bounds the negative definite $-E_8$ plumbing. Show that this implies that $h(P)\leq -1$. One can show that $h(P)$ is in fact $-1$.
\end{Exercise}

It is a result of Elkies (\cite{Elk}) that for any quadratic form $Q$ the quantity $\rho(Q)$ is always non-negative and it is zero if and only if the form $Q$ is standard. Hence the groundbreaking result of Donaldson (\cite{Don4}) follows by considering a closed four manifold as a cobordism form $S^3$ to $S^3$.
\begin{cor}
Suppose a closed four manifold $X$ has negative definite intersection form $Q_X$. Then $Q_X$ is standard.
\end{cor}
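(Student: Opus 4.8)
The plan is to realize the closed four-manifold $X$ as a cobordism from $S^3$ to itself and then feed it into the Fr\o yshov inequality of Proposition \ref{fro}.

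First I would remove two disjoint open balls from $X$ to obtain a compact four-manifold $W$ with $\partial W = S^3 \amalg S^3$, which we regard as a cobordism from $Y_0 = S^3$ to $Y_1 = S^3$. Since the two boundary spheres have vanishing $H^1$ and $H^2$, the long exact sequences of the relevant pairs show that the restriction map $H^2(W,\partial W;\mathbb{Q}) \to H^2(W;\mathbb{Q})$ is an isomorphism and that both groups are identified with $H^2(X;\mathbb{Q})$; consequently $I(W) = H^2(X;\mathbb{Q})$ and the induced quadratic form is exactly $Q_X$, which is negative definite by hypothesis. Both ends are homology spheres, so Proposition \ref{fro} applies and yields $h(S^3) \geq h(S^3) + \rho(Q_X)$, i.e. $\rho(Q_X) \leq 0$.

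Next I would invoke the theorem of Elkies quoted above: for every negative definite form $Q$ one has $\rho(Q) \geq 0$, with equality precisely when $Q$ is the standard (diagonal) form. Combined with $\rho(Q_X) \leq 0$ this forces $\rho(Q_X) = 0$, and hence $Q_X$ is the standard form $\mathrm{diag}(-1,\ldots,-1)$, which is the assertion.

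I do not expect any serious obstacle here once Proposition \ref{fro} and Elkies' estimate are in hand; the only point requiring a little care is the bookkeeping that removing the balls does not alter the intersection form (equivalently, that $\rho$ of the cobordism $W$ equals $\rho$ of $X$), together with noting that $h(S^3) = 0$ so the two copies of it cancel in the inequality. The genuinely hard input, the Fr\o yshov inequality itself, is already assumed.
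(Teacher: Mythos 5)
Your proof is correct and follows exactly the same route as the paper: remove two balls to view $X$ as a cobordism from $S^3$ to $S^3$, apply Proposition \ref{fro} with $h(S^3)=0$ to get $\rho(Q_X)\leq 0$, and combine with Elkies' theorem to force $\rho(Q_X)=0$ and hence $Q_X$ standard. Your added remark about verifying that $I(W)$ and its quadratic form coincide with $(H^2(X;\mathbb{Q}),Q_X)$ after deleting the balls is the right point to check, and it goes through as you indicate.
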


Another important corollary is the following.
\begin{cor}\label{froy}
The Fr\o yshov invariant is invariant under homology cobordisms. Furthermore $h(-Y)=-h(Y)$.
\end{cor}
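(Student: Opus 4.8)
The plan is to prove the two assertions separately. For homology cobordism invariance, suppose $W$ is a homology cobordism from $Y_0$ to $Y_1$. Since both inclusions $Y_i\hookrightarrow W$ are homology isomorphisms, $H_2(W;\mathbb{Q})=0$, so the intersection form $Q_W$ is the zero form, which is vacuously negative definite with $\rho(Q_W)=\tfrac18(0-0)=0$; Proposition~\ref{fro} then gives $h(Y_0)\geq h(Y_1)$. Applying the same reasoning to the reversed cobordism $\overline{W}$ (the same $4$-manifold, with orientation reversed and its two ends interchanged), which is again a homology cobordism --- now from $Y_1$ to $Y_0$ --- and still has zero intersection form, yields $h(Y_1)\geq h(Y_0)$, hence $h(Y_0)=h(Y_1)$. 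This part is routine; the only thing worth remarking is that Proposition~\ref{fro} is being used in the degenerate case $b_2(W)=0$.

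For $h(-Y)=-h(Y)$ the idea is to combine the exact triangle~(\ref{LES}) with Poincar\'e--Lefschetz duality. First recall, from Proposition~\ref{rational} and Definition~\ref{froyshov}, that for a homology sphere $Y$ the number $h(Y)$ is characterised by
\begin{equation*}
\operatorname{im}\!\big(i_*\colon \HMb_*(Y)\to\HMt_*(Y)\big)\;\cong\;\ztwo[U^{-1},U]/\ztwo[U]\langle -2h(Y)\rangle
\end{equation*}
as graded $\ztwo[U]$-modules, equivalently that $\ker\!\big(i_*\colon\HMb_*(Y)\to\HMt_*(Y)\big)$ is a shifted copy of $\ztwo[U]$ with top degree $-2h(Y)$. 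Now dualise~(\ref{LES}) degreewise over $\ztwo$ --- legitimate because every group is an $\ztwo$-vector space with finite-dimensional graded pieces and all maps are $\ztwo[U]$-linear --- to obtain the cohomology exact triangle
\begin{equation*}
\cdots\to\HMb^*(Y)\xrightarrow{p^*}\HMf^*(Y)\xrightarrow{j^*}\HMt^*(Y)\xrightarrow{i^*}\HMb^*(Y)\to\cdots .
\end{equation*}
By Proposition~\ref{poincare}, together with naturality of the duality isomorphisms with respect to the exact triangles, this cohomology triangle is carried onto the homology triangle of $-Y$, with $p^*$ corresponding to $i_*\colon\HMb_*(-Y)\to\HMt_*(-Y)$. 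Since $p^*$ is the transpose of $p_*\colon\HMf_*(Y)\to\HMb_*(Y)$ we have $\ker(p^*)\cong(\operatorname{coker}p_*)^{\vee}$, and by exactness of the triangle for $Y$, $\operatorname{coker}p_*\cong\operatorname{im}\big(i_*\colon\HMb_*(Y)\to\HMt_*(Y)\big)\cong\ztwo[U^{-1},U]/\ztwo[U]\langle -2h(Y)\rangle$. Hence $\ker\!\big(i_*\colon\HMb_*(-Y)\to\HMt_*(-Y)\big)$ is, up to the grading shift in Proposition~\ref{poincare}, the graded dual of $\ztwo[U^{-1},U]/\ztwo[U]\langle -2h(Y)\rangle$.

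It remains to extract gradings. The module $\ztwo[U^{-1},U]/\ztwo[U]\langle -2h(Y)\rangle$ has lowest nonzero degree $2-2h(Y)$; since Poincar\'e--Lefschetz duality reverses the grading and shifts it by a constant depending only on $b_1(Y)$ --- exactly as the shift $j\mapsto -1-b_1(Y)-j$ in Proposition~\ref{gradingprop}(3), which moreover determines the shift for $\HMb$ because the three duality isomorphisms are intertwined by the two exact triangles --- the top degree of $\ker\!\big(i_*\colon\HMb_*(-Y)\to\HMt_*(-Y)\big)$ is of the form $2h(Y)+\kappa$ with $\kappa$ a universal constant. Comparing with the characterisation of $h(-Y)$ gives $-2h(-Y)=2h(Y)+\kappa$, and evaluating at $Y=S^3$ (where $-Y=Y$ and $h(S^3)=0$) forces $\kappa=0$, so $h(-Y)=-h(Y)$; note that the sign flip is precisely the grading reversal in duality, consistent with the value $h(P)=-1$ of Exercise~\ref{E8}. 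I expect the main obstacle to be this last step: honestly tracking the grading conventions through Poincar\'e--Lefschetz duality --- the shifts in Proposition~\ref{poincare}, the fact that $U$ acts with degree $+2$ rather than $-2$ on cohomology, and the grading reversal --- so as to be sure that $\kappa$ is genuinely universal (after which the $S^3$ computation determines it); everything else is formal manipulation of the exact triangle with results already in hand (Propositions~\ref{rational}, \ref{poincare}, \ref{gradingprop}, with normalisations as in \cite{KM}).
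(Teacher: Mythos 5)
Your argument for homology cobordism invariance is exactly the paper's: apply Proposition~\ref{fro} (with $\rho(Q)=0$) to $W$ and to its orientation reversal. The second claim $h(-Y)=-h(Y)$ is left by the paper as the exercise immediately following the corollary, and your solution --- dualizing the exact triangle~(\ref{LES}), passing to $-Y$ via Proposition~\ref{poincare}, and using Proposition~\ref{gradingprop}(3) (with the universal grading constant pinned down by $S^3$) --- follows the suggested route.
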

\begin{proof}
A homology cobordism $W$ gives us an inequality of the form $h(Y_0)\geq h(Y_1)$. The reversed cobordism with the reversed orientation gives us the opposite inequality, so the result follows.
\end{proof}
\begin{Exercise}
Prove the second part of the corollary using Poincar\'e duality and Proposition \ref{gradingprop}.
\end{Exercise}

\begin{proof}[Proof of Proposition \ref{fro}]
First of all we can perform surgeries on the cobordism $W$ without affecting the intersection form so that $b_1=0$, so we assume the latter holds. Our claim is that for every spin$^c$ structure $\spin$ on $W$, the map
\begin{equation*}
\HMb_*(W,\spin):\HMb_*(Y_0)\rightarrow \HMb_*(Y_1)
\end{equation*}
induces an isomorphism. Notice that by Proposition \ref{rational} and bullet $(1)$ of Proposition \ref{gradingprop} these groups consist of a summand $\mathbb{F}$ in each even degree. Furthermore the map $\HMb_*(W,\spin)$ has degree $(b_2(W)-c_1(S^+)^2)/4$ from bullet $(2)$ of Proposition \ref{gradingprop} and the fact that the cobordism is negative definite. From this the result follows (by taking the infimum over all characteristic vectors) because there is the commutative diagram of $\mathbb{F}[U]$-modules

\begin{center}
\begin{tikzpicture}
  \matrix (m) [matrix of math nodes,row sep=2em,column sep=5em,minimum width=2em]
  {
  \HMb_{\bullet}(Y_0,\spin_0) & \HMb_{\bullet}(Y_1,\spin_1)\\
    \HMt_{\bullet}(Y_0,\spin_0) & \HMt_{\bullet}(Y_1,\spin_1)\\
   };
  \path[-stealth]
  
  (m-1-1) edge node [above] {$\HMb_{\bullet}(W,\spin)$}  (m-1-2)
    edge node [left] {$i_*$} (m-2-1)
 (m-2-1) edge node [above] {$\HMt_{\bullet}(W,\spin)$} (m-2-2)
 (m-1-2) edge node [left] {$i_*$} (m-2-2)
    ;
   
\end{tikzpicture}
\end{center}
and the bottom element of the tower in \textit{HM-to} (whose grading determines the Fr\o yshov invariant) is the element of minimum degree in the image of $i_*$.
\par
The claim follows by directly inspecting the \textit{reducible} moduli spaces involved. This is essentially the same as Exercise \ref{traj} with a little extra input of Hodge theory on manifolds with cylindrical ends (\cite{APS}). The latter behaves qualitatively like that on closed manifold because the ends are modeled on homology spheres. Indeed, there is exactly one solution to the unperturbed equations $F_{A^t}^+=0$ as $b_1(W)=0$, and this is regular because $b_+(W)=0$. The solutions to the Seiberg-Witten equations in the blow-up correspond then as in Exercise \ref{traj} to the projectivization of the elements in the kernel of the Dirac operator $D^+_{A_0}$ which have the right asymptotics at both ends. In particular, when the moduli space is zero dimensional (i.e. when the difference in grading is exactly the degree of the map), it consists of a single point, and the result follows.
\end{proof}

\vspace{1cm}
\section{$\mathrm{Pin}(2)$-monopole Floer homology} 
In this section we discuss how Manolescu's recent $\Pin$-Seiberg-Witten Floer homology \cite{Man2} fits in the framework we have developed so far. We refer the reader to the nice surveys \cite{Man3} and \cite{Man4} for a thorough introduction to his approach, which is based on Conley index theory and finite dimensional approximations. The content of the next two sections come from Chapter $4$ of \cite{Lin}.

\vspace{0.3cm}
\textbf{Self-conjugate spin$^c$ structures. }There is a natural action $\jmath$ on the set of spin$^c$ structures $\mathrm{Spin}^c(Y)$ given by complex conjugation. We denote the orbits by $[\spin]$, and call the fixed points of this action \textit{self-conjugate spin$^c$-structures}.\par
Self-conjugate spin$^c$-structures are tightly related to spin structures. Recall that a spin structure can be thought as a lift of the $SO(3)$ frame bundle to a $\mathrm{Spin}(3)$ bundle. From the observation that $\mathrm{Spin}(3)$ can be identified with $\mathrm{SU}(2)$ and the spinor representation is just the usual action on $\mathbb{C}^2=\mathbb{H}$, we can recover the definition we have given of spin$^c$ structure by taking the associated bundle. The multiplication by $j$ on $\mathbb{H}$ from the right defines a quaternionic structure on the Clifford bundle $S$, i.e. a complex anti-linear automorphism such that $j^2=-\mathrm{Id}$. In particular, if $\spin$ is induced by a spin structure the multiplication by $j$ induces an isomorphism between $\spin$ and $\bar{\spin}$. We have just shown that there is a map
\begin{equation*}
\{\text{spin structures on $Y$}\}\rightarrow \{\text{self-conjugate spin$^c$ structures on $Y$}\}.
\end{equation*}
\begin{prop}\label{spincself}
This map is surjective and $2^{b_1(Y)}$ to $1$ (the first Betti number is taken with $\mathbb{Z}$-coefficients).
\end{prop}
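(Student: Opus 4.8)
The plan is to analyze the fibers of the map $\{\text{spin structures}\} \to \{\text{self-conjugate spin}^c\text{ structures}\}$ directly, using the two affine-space descriptions: spin structures on $Y$ (when they exist, which they do since $TY$ is trivial) form an affine space over $H^1(Y;\mathbb{Z}/2)$, while spin$^c$ structures form an affine space over $H^2(Y;\mathbb{Z})$ by Lemma~\ref{spincclass}. First I would pin down how conjugation acts on the affine space: if $\spin$ corresponds to $(S,\rho)$ then $\bar{\spin}$ corresponds to the conjugate bundle, and one checks $c_1(\bar{\spin}) = -c_1(\spin)$; more precisely, if $\spin' = \spin + e$ for $e \in H^2(Y;\mathbb{Z})$ (i.e. $S' = S \otimes L$ with $c_1(L) = e$), then $\bar{\spin}' = \bar{\spin} - e$. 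Hence, fixing a self-conjugate basepoint $\spin_0$ (which exists by the construction in the paragraph before the proposition: a spin structure gives one via the quaternionic structure $j$), the conjugation action on $H^2(Y;\mathbb{Z})$-torsors becomes $e \mapsto -e$, so $\spin_0 + e$ is self-conjugate if and only if $2e = 0$, i.e. $e$ is $2$-torsion in $H^2(Y;\mathbb{Z})$.

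Next I would count spin structures mapping to a fixed self-conjugate $\spin$. Two spin structures differing by $a \in H^1(Y;\mathbb{Z}/2)$ induce the same spin$^c$ structure precisely when $a$ lies in the image of the Bockstein-type map $H^1(Y;\mathbb{Z}/2) \to H^2(Y;\mathbb{Z})$ being zero, i.e. when $\beta(a) = 0$ where $\beta$ is the integral Bockstein; equivalently, the spin-to-spin$^c$ map on affine spaces is the reduction/Bockstein $H^1(Y;\mathbb{Z}/2) \to H^2(Y;\mathbb{Z})$ composed appropriately, whose kernel is the image of $H^1(Y;\mathbb{Z}) \to H^1(Y;\mathbb{Z}/2)$. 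So the fiber over any self-conjugate $\spin$ in the image has cardinality $|\mathrm{im}(H^1(Y;\mathbb{Z}) \to H^1(Y;\mathbb{Z}/2))|$, which by the long exact coefficient sequence equals $|H^1(Y;\mathbb{Z}/2)| / |{}_2 H^2(Y;\mathbb{Z})|$, where ${}_2 H^2$ denotes the $2$-torsion. For surjectivity I would argue that every self-conjugate $\spin$ is of the form $\spin_0 + e$ with $2e = 0$, and then exhibit a preimage: since $e$ is $2$-torsion it is $\beta(a)$ for some $a \in H^1(Y;\mathbb{Z}/2)$ (again from the Bockstein sequence, as $\beta$ maps onto the $2$-torsion of $H^2$), and modifying the basepoint spin structure by $a$ lands on $\spin_0 + e$; I should double-check the factor-of-two bookkeeping here since $H^2(Y;\mathbb{Z})$ is an affine space over integral, not mod-$2$, classes.

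The final arithmetic step is to identify $|H^1(Y;\mathbb{Z}/2)| / |{}_2 H^2(Y;\mathbb{Z})|$ with $2^{b_1(Y)}$. Writing $H^1(Y;\mathbb{Z}) \cong \mathbb{Z}^{b_1} \oplus T$ and $H^2(Y;\mathbb{Z}) \cong \mathbb{Z}^{b_2} \oplus T'$ with $T, T'$ finite, the universal coefficient theorem gives $H^1(Y;\mathbb{Z}/2) \cong (\mathbb{Z}/2)^{b_1} \oplus ({}_2 T) \oplus (\text{Ext piece from } H_1)$, and by Poincaré duality and the structure of torsion linking forms the count of $2$-torsion contributions in degrees $1$ and $2$ match up, leaving exactly the free-rank contribution $2^{b_1}$. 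I expect the main obstacle to be exactly this torsion bookkeeping: keeping careful track of which Bockstein (the $\mathbb{Z}/2 \to \mathbb{Z}/4 \to \mathbb{Z}/2$ one versus the integral one $H^1(\mathbb{Z}/2) \to H^2(\mathbb{Z})$) governs the fiber, and invoking Poincaré duality to see that the mod-$2$ cohomology torsion that inflates $|H^1(Y;\mathbb{Z}/2)|$ beyond $2^{b_1}$ is precisely cancelled by $|{}_2 H^2(Y;\mathbb{Z})|$. Everything else is a routine chase through the universal coefficient and coefficient-sequence diagrams.
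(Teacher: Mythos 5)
Your approach is the same as the paper's: fix a self-conjugate base spin$^c$ structure so conjugation becomes $e\mapsto -e$ on the affine space $H^2(Y;\mathbb{Z})$, identify self-conjugate classes with the $2$-torsion ${}_2H^2(Y;\mathbb{Z})$, recognize that the spin $\to$ spin$^c$ map is affine over the Bockstein $\beta\colon H^1(Y;\mathbb{Z}/2)\to H^2(Y;\mathbb{Z})$, and read off surjectivity and the fiber from the Bockstein long exact sequence. That is exactly the paper's argument.

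Where you make things harder than they need to be is the final counting step. You correctly identify the fiber with $\mathrm{im}\bigl(H^1(Y;\mathbb{Z})\to H^1(Y;\mathbb{Z}/2)\bigr)=\ker\beta$, and then propose to compute its order as $|H^1(Y;\mathbb{Z}/2)|/|{}_2H^2(Y;\mathbb{Z})|$, invoking Poincar\'e duality and torsion linking forms to show the excess torsion cancels. That route does close (your arithmetic is fine once you note the ``Ext piece'' you want is $\mathrm{Hom}$ of the torsion of $H_1$, not an Ext from $H_0$, which vanishes), but it is unnecessary. From the Bockstein sequence the kernel of the reduction $H^1(Y;\mathbb{Z})\to H^1(Y;\mathbb{Z}/2)$ is $2H^1(Y;\mathbb{Z})$, so $\ker\beta\cong H^1(Y;\mathbb{Z})/2H^1(Y;\mathbb{Z})$; and $H^1(Y;\mathbb{Z})\cong\mathrm{Hom}(H_1(Y;\mathbb{Z}),\mathbb{Z})$ is torsion-free of rank $b_1(Y)$, so this quotient is $(\mathbb{Z}/2)^{b_1(Y)}$ on the nose. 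The $T$ you wrote in $H^1(Y;\mathbb{Z})\cong\mathbb{Z}^{b_1}\oplus T$ is always zero; noticing that removes the need for any duality or torsion bookkeeping. Also, the ``factor-of-two'' worry you flag is unfounded: two spin structures differing by $a$ have spinor bundles differing by tensoring with the complexification of the real line bundle of $w_1=a$, whose first Chern class is $\beta(a)$, not $2\beta(a)$.
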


\begin{proof}
Recall that spin structures are classified by $H^1(Y;\mathbb{Z}/2\mathbb{Z})$ (see for example \cite{GS}), while spin$^c$ structures are classified by $H^2(Y;\mathbb{Z})$. In the description of Lemma \ref{spincclass}, if we fix the base spin$^c$ structure to be self-conjugate, the conjugation action on the set of spin$^c$ structures $\mathrm{Spin}^c(Y)$ corresponds to the conjugation action of the set of complex line bundles on $Y$. As
\begin{equation*}
c_1(\bar{L})=-c_1({L}),
\end{equation*}
the self-conjugate spin$^c$ structures are identified with the $2$-torsion of $H^2(Y;\mathbb{Z})$. Using the Bockstein exact sequence
\begin{equation*}
\cdots {\longrightarrow} H^1(Y;\mathbb{Z})\stackrel{\cdot2}{\longrightarrow}H^1(Y;\mathbb{Z})\longrightarrow H^1(Y;\mathbb{Z}/2\mathbb{Z}){\longrightarrow} H^2(Y;\mathbb{Z})\stackrel{\cdot2}{\longrightarrow} H^2(Y;\mathbb{Z})\longrightarrow\cdots
\end{equation*}
the result follows.
\end{proof}
For example, on $S^2\times S^1$ the two spin structures both induce the same spin$^c$ structure (the only torsion one), while on $\mathbb{R}P^3$ the two spin structures induce different spin$^c$ structures.

\vspace{0.5cm}
\textbf{The formal picture. }To each self-conjugate spin$^c$ structure $\spin$ we associate the $\Pin$-\textit{monopole Floer homology groups}
\begin{equation}\label{pin2}
\HSt_*(Y,\spin),\qquad \HSf_*(Y,\spin),\qquad \HSb_*(Y,\spin).
\end{equation}
As $\spin$ is torsion (see the proof of Proposition \ref{spincself}), these groups carry as in the usual case a relative $\mathbb{Z}$-grading and an absolute $\mathbb{Q}$-grading. They also carry a structure of graded module over the ring
\begin{equation*}
\Rin= \ztwo[V][Q]/(Q^3)
\end{equation*}
where the actions of $V$ and $Q$ have degree respectively $-4$ and $-1$. These three groups should be thought as the homology groups of the quotient of $\mathcal{B}^{\sigma}(Y,\spin)$ by a natural fixed-point free involution $\jmath$. These three groups fit the long exact sequence analogue to (\ref{LES}), and they satisfy the analogue of Poincar\'e-Lefschetz duality (Proposition \ref{poincare}) with the respective cohomological version. Furthermore, they fit in the Gysin exact sequence
\begin{equation*}
\cdots \stackrel{i_*}{\longrightarrow} \HSt_*(Y)\stackrel{\cdot Q}{\longrightarrow}\HSt_*(Y)\stackrel{p_*}{\longrightarrow} \HMt_*(Y)\stackrel{i_*}{\longrightarrow} \HSt_*(Y)\stackrel{j_*}{\longrightarrow} \cdots
\end{equation*}
This is a sequence of $\Rin$-modules where $V$ acts on $\HMt_*$ as $U^2$ and $Q$ acts as zero. 
\\
\par
For any rational number $d$ let $\V_d$ and  $\V^+_d$ be the graded $\ztwo [V]$-modules $\ztwo[V^{-1},V]$ and $\ztwo[V^{-1},V]/V\ztwo [V]$ where the grading is shifted so that the element $1$ has degree $d$. The following are the analogues of Proposition \ref{S3} and \ref{rational}.
\begin{prop}\label{S3pin}
We have the identifications as absolutely graded $\Rin$-modules:
\begin{align*}
\HSt_{*}(S^3)&\cong \V^+_2\oplus \V^+_1\oplus \V^+_0\\
\HSf_{*}(S^3)&\cong \Rin\langle-1\rangle\\\
\HSb_{*}(S^3)&\cong \V_2\oplus \V_1\oplus \V_0.
\end{align*}
The action of $Q$ on the to and bar groups is an isomorphism from the first tower onto the second tower and from the second tower onto the third (and zero otherwise). More generally, given a rational homology sphere $Y$ and a self-conjugate spin$^c$ structure $\spin$ we have an isomorphism of $\Rin$-modules
\begin{equation*}
\HSb_*(Y,\spin)\cong \HSb_*(S^3)
\end{equation*}
up to grading shift. The group $\HSt_*(Y,\spin)$ vanishes in degrees low enough, and the map $i_*$ is an isomorphism in degrees high enough.
\end{prop}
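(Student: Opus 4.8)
The plan is to carry out the analogues of Example~\ref{exS3} and Example~\ref{modS3}, now keeping track of the extra involution $\jmath$. As with the ordinary groups, all three flavors in (\ref{pin2}) depend, for a rational homology sphere, only on the reducible critical points and on the dynamics near the boundary of the quotient $\mathcal{B}^{\sigma}(Y,\spin)/\jmath$: for a metric with no irreducible solutions and a small $\jmath$-equivariant perturbation (achievable for $S^3$ with the round metric by Propositions~\ref{weitz} and \ref{redsol}, with the same outcome near the reducible for a general rational homology sphere), the only contributions come from the unique reducible connection up to gauge --- unique because $b_1(Y)=0$, so that $\mathcal{B}^{\sigma}(Y,\spin)$ is homotopy equivalent to $\mathbb{C}P^{\infty}$ by Proposition~\ref{homotopytype}. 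I would therefore first compute the answer for $S^3$ and then transfer it to a general rational homology sphere exactly as in the proof of Proposition~\ref{rational}.

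The key new structural input is that at the ($\jmath$-fixed) reducible the perturbed Dirac operator $D_B$ commutes with the quaternionic structure $j$ on $S$, so all of its eigenspaces are quaternionic; generically the nonzero eigenvalues have two complex dimensional eigenspaces $E_k\cong\mathbb{H}$ (with $\lambda_k$ accumulating only at $\pm\infty$ by Lemma~\ref{spectral}) and $0$ is not an eigenvalue. On $\mathbb{C}P(\mathcal{H})\simeq\mathbb{C}P^{\infty}$ (the projectivized spinors) the $D_B$-flow is then Morse--Bott with critical manifolds $\mathbb{C}P(E_k)=\mathbb{C}P^1$, and $\jmath$ acts on each $\mathbb{C}P^1$ as a free involution (it has no invariant complex line), so $\mathbb{C}P(E_k)/\jmath=\mathbb{R}P^2$. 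Choosing the secondary $\jmath$-equivariant perturbation along these critical manifolds to be the pullback of a Morse function on $\mathbb{R}P^2$ with three critical points, the critical set of the flow on $\partial(\mathcal{B}^{\sigma}/\jmath)$ becomes a doubly infinite tower of \emph{triples} lying over the reducible, one triple per quaternionic eigenvalue, with consecutive triples separated by $4$ in the relative grading and the three elements of a triple separated by $1$. As in Exercise~\ref{traj}, the spaces of unparametrized trajectories between critical orbits over consecutive eigenvalues have positive dimension and so contribute nothing to the Floer differential, while the trajectories inside a single $\mathbb{R}P^2$ realize the Morse differential of $\mathbb{R}P^2$, which vanishes over $\ztwo$ (this is precisely where $\ztwo$-coefficients matter). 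Hence all differentials vanish, and counting generators by grading gives that $\HSb_{*}(S^3)$ is, as a $\ztwo[V]$-module, the localization $\Rin[V^{-1}]=\V_2\oplus\V_1\oplus\V_0$, that $\HSf_{*}(S^3)$, generated by the triples over the negative eigenvalues, is $\Rin\langle-1\rangle$ (the shift fixed, as for $\HMf_{*}(S^3)$, by taking $X=B^4$), and that $\HSt_{*}(S^3)$, generated by the triples over the positive eigenvalues, is $\V^+_2\oplus\V^+_1\oplus\V^+_0$. The $\Rin$-module structure is read off as in Example~\ref{modS3}, by intersecting the relevant trajectory spaces with geometric representatives of the Poincar\'e duals of $V$ and $Q$: $V$ implements the period-four shift (one quaternionic eigenvalue down) and $Q$ the degree $-1$ shift carrying each tower onto the next, with $Q^3=0$ because $q^3=0$ in $H^{*}(\mathbb{R}P^2;\ztwo)$.

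For a general rational homology sphere $Y$ with self-conjugate $\spin$ the same analysis applies verbatim: there is still a unique reducible up to gauge, the perturbed Dirac operator there still commutes with $j$, and the only change is that the ``center'' of its spectrum --- equivalently the spectral flow along a path of reducibles to the $S^3$ model --- is shifted, which is exactly the grading shift in the statement. The vanishing of $\HSt_{*}(Y,\spin)$ in low degrees and the fact that $i_{*}$ is an isomorphism in high degrees then follow, as in Proposition~\ref{rational}: once the perturbation is small, irreducible critical points contribute only in a bounded band of degrees, so above that band $\HSt_{*}(Y,\spin)$ agrees with $\HSb_{*}(Y,\spin)$, and below the bottom of the reducible tower it vanishes.

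The main obstacle is the equivariant transversality and bookkeeping package: one must know that $\jmath$-equivariant perturbations suffice to make the critical points non-degenerate and all the relevant $\Pin$-moduli spaces (including the secondary perturbation along the $\mathbb{R}P^2$'s) regular, and one must set up the relative grading and the completion carefully so that the doubly infinite towers and the $\Rin$-module structure --- in particular the geometric representatives of $V$ and $Q$ --- are well defined. All of this is developed in Chapter~$4$ of \cite{Lin} on top of the analytic foundations of \cite{KM}; the point here is only that, granted that machinery, the computation reduces to the finite-dimensional picture above.
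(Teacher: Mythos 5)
Your underlying geometric picture is correct and matches the paper's: for a rational homology sphere with a self-conjugate $\spin$ and a small $\jmath$-equivariant perturbation, everything is determined by the unique $\jmath$-fixed reducible, at which the perturbed Dirac operator stays quaternionic-linear, so its nonzero eigenspaces are generically one-dimensional over $\mathbb{H}$; the critical set in the boundary is then a doubly infinite tower of $\mathbb{C}P^1$'s separated by $4$ in relative grading, with $\jmath$ acting as the antipodal map, so that the quotients are $\mathbb{R}P^2$'s. Where you diverge is in how the computation is finished. The paper stays inside the Morse--Bott framework it has just set up: the chain groups are built from (variants of) singular chains on the $\mathbb{R}P^2$'s, and the computation is run using the spectral sequence induced by filtering by the value of $\Lambda$ (Example~\ref{morsebott}); the $E^1$ page is a direct sum of copies of $H_*(\mathbb{R}P^2;\ztwo)$ with gradings concentrated in a $\{0,1,2\}$-band every $4$ degrees, so there is no room for higher differentials, and Proposition~\ref{S3pin} follows. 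You instead propose a \emph{secondary $\jmath$-equivariant perturbation} pulled back from a minimal Morse function on each $\mathbb{R}P^2$, converting Morse--Bott critical manifolds into triples of genuine Morse critical points, and then argue a direct tower computation with intersection representatives of $V$ and $Q$. Both routes lead to the same answer, and your version makes the $\Rin$-module structure very concrete (as in Example~\ref{modS3}).

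The one genuine gap is that the secondary perturbation you invoke is not part of the machinery you cite. The Morse--Bott setup in \cite{Lin} (Chapter~4) exists precisely because generic $\jmath$-equivariant perturbations of the equations cannot break the quaternionic symmetry at the spin connection, so the critical $\mathbb{C}P^1$'s are unavoidable; the theory is then built on singular chains on these submanifolds, not on a further Morse splitting. A cascades-style argument with an auxiliary Morse function on the critical loci is a standard way to present Morse--Bott theory in finite dimensions, and it is plausible here, but it is a different chain model, and it would need its own compatibility checks with the gauge-theoretic compactness and gluing results of \cite{KM} rather than being covered by the machinery you appeal to. In the form written, the step ``Choosing the secondary $\jmath$-equivariant perturbation $\ldots$ the critical set $\ldots$ becomes a doubly infinite tower of triples'' is therefore not justified by your references. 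If you want to keep that presentation you should either establish the equivalence of the two chain models in this setting or replace that step by the paper's spectral-sequence argument, which requires no extra perturbation. Two smaller points: the normalization of the grading shift for $\HSf_*(S^3)$ is not ``as for $\HMf_*(S^3)$ by taking $X=B^4$''; the $B^4$ argument pins down the bottom of the \textit{to} tower, and the shift for \textit{from} is then deduced via the exact triangle and duality. And the relation $Q^3=0$ comes from $H^*(B\Pin;\ztwo)=\ztwo[V,Q]/(Q^3)$ --- i.e.\ from $H^*(\mathcal{B}^\sigma(Y,\spin)/\jmath;\ztwo)$ --- not from $H^*(\mathbb{R}P^2;\ztwo)$ per se, although the restriction of $Q$ to each $\mathbb{R}P^2$ is indeed the generator of $H^1(\mathbb{R}P^2;\ztwo)$, which is the heuristic you had in mind.
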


As in the usual case, a cobordism $W$ equipped with a self-conjugate spin$^c$ structure $\spin$ between $(Y_0,\spin_0)$ and $(Y_1,\spin_1)$ induces a map of $\Rin$-modules
\begin{equation*}
\HSt_*(W,\spin):\HSt_*(Y_0,\spin_0)\rightarrow \HSt_*(Y_1,\spin_1).
\end{equation*}
Notice that the restrictions $\spin_0$ and $\spin_1$ are also self-conjugate. When dealing with the total map, we need to consider the total completed Floer group
\begin{equation*}
\HSt_{\bullet}(Y)=\bigoplus_{[\spin]\in \mathrm{Spin}^c(Y)/\jmath} \HSt_{\bullet}(Y,[\spin])
\end{equation*}
when for a non self-conjugate pair $\spin\neq \bar{\spin}$ we define $\HSt_{\bullet}(Y,[\spin])$ to be the canonically identified groups
\begin{equation*}
\HMt_{\bullet}(Y,\spin)\equiv\HMt_{\bullet}(Y,\bar{\spin}).
\end{equation*}
Then \textit{any} cobordism $W$ from $Y_0$ to $Y_1$ induces a map
\begin{equation*}
\HSt_{\bullet}(W):\HSt_{\bullet}(Y_0)\rightarrow \HSt_{\bullet}(Y_1),
\end{equation*}
and these satisfy the analogue of Proposition \ref{functoriality}. Of course the whole discussion works for the from and bar versions of Floer homology.

\begin{Exercise}Suppose we have cobordisms $W_1$ from $Y_0$ to $Y_1$ and $W_2$ from $Y_1$ to $Y_2$ together with self-conjugate spin$^c$ structures that agree on $Y_1$. Does this imply that they glue to a self conjugate spin$^c$ structure on $W_2\circ W_1$? 
\end{Exercise}

\vspace{0.5cm}

\textbf{The construction. }The Seiberg-Witten equations have an extra symmetry when the spin$^c$ structure is actually induced by a spin structure.  Indeed, the choice of a spin structure provides us with two extra data:
\begin{itemize}
\item a preferred base spin$^c$ connection $B_0$ with $B_0^t$ flat, the spin connection;
\item a quaternionic structure $j$ on the Clifford bundle $S$.
\end{itemize}
Accordingly, we will also write the multiplication by complex numbers from the right. These two features are compatible in the sense that the Dirac operator $D_{B_0}$ is quaternionic linear, i.e.
\begin{equation*}
D_{B_0}(\Psi\cdot j)=(D_{B_0}\Psi)\cdot j
\end{equation*}
for every spinor $\Psi$. In this case the configuration space $\mathcal{C}(Y,\spin)$ comes with diffeomorphism $\jmath$ given by
\begin{equation*}
\jmath\cdot(B_0+b,\Psi)=(B_0-b,\Psi\cdot \jmath).
\end{equation*}
Then $\jmath^2$ is the identity on the connection component and minus the identity on the spinor components. As multiplication by $-1$ is a gauge transformation, $\jmath$ is an involution on the moduli space of configurations $\mathcal{B}(Y,\spin)$. Its only fixed points are the equivalence classes $[B,0]$ with $B$ the spin connection of one of the $2^{b_1(Y)}$ spin structures inducing the spin$^c$ structure $\spin$ (see Proposition \ref{spincself}). Similarly, there is an induced involution (still denoted by $\jmath$) on the blown-up moduli space of configurations $\mathcal{B}^{\sigma}(Y,\spin)$ which is fixed point-free.
\\
\par
If we choose $B_0$ as the basepoint, the Chern-Simons-Dirac functional $\mathcal{L}$ is $\jmath$-invariant. The goal is to perform the constructions we have discussed so far in a way such that the extra symmetry $\jmath$ is preserved. This causes some extra complications because this action has fixed points (downstairs). In particular, if we add equivariant perturbations, the perturbations of the operator $D_{B_0}$ with $B_0$ the spin connection will always be quaternionic linear. This implies that its eigenspaces are even dimensional, so that the condition of simple spectrum we have assumed so far cannot be satisfied and transversality is not achieved. On the other hand we can assure that this has two (complex) dimensional eigenspaces. In particular the description of Lemma \ref{critblow} works as follows in our case.
\begin{lem}For a generic perturbation $\jmath$-equivariant perturbation, the critical points of $(\mathrm{grad}\mathcal{L})^{\sigma}$ are of the following three types:
\begin{itemize}
\item irreducible critical points, which are transverse in the usual sense;
\item reducible critical points blowing down to a reducible point $[B,0]$ which is not a fixed point, which are trasverse in the usual sense;
\item reducible critical points blowing down to a fixed point, which come in submanifolds diffeomorphic to $\mathbb{C}P^1$ and are transverse in the Morse-Bott sense (i.e. the Hessian is non degenerate in the normal directions).
\end{itemize}
The fixed point free involution $\jmath$ sends irreducible configurations to irreducible configurations, the tower of critical points over $[B,0]$ (a non-fixed critical point) to the tower over $[\bar{B},0]$ (respecting the eigenvalues) and acts as the antipodal map on each of the critical submanifolds over the fixed points.
\end{lem}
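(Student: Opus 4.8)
The plan is to adapt the transversality scheme of Chapters 12 and 15 of \cite{KM} to the $\jmath$-equivariant setting, keeping track of the constraint that the quaternionic structure imposes on the linearization along the fixed locus. First I would fix a Banach space of tame perturbations as in \cite{KM} and cut it down to the closed subspace of $\jmath$-invariant ones; replacing $\mathcal{L}$ by $\mathcal{L}+\mathfrak{q}$ with $\mathfrak{q}$ in this subspace preserves the involution. After a preliminary $\jmath$-invariant perturbation of $\mathcal{L}$ along the torus of reducibles $H^1(Y;i\mathbb{R})/2\pi i H^1(Y;\mathbb{Z})$ (on which $\jmath$ acts as $B\mapsto\bar B$, the $[-1]$ involution), we may assume the reducible critical points are nondegenerate in the connection directions, the $2^{b_1(Y)}$ spin connections being among them. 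The key structural point, already indicated in the statement, is that at such a fixed reducible $[B,0]$ the linearization of $(\mathrm{grad}(\mathcal{L}+\mathfrak{q}))^{\sigma}$ in the spinorial directions is a perturbed Dirac operator $D_{B,\mathfrak{q}}$ which remains quaternionic linear, because $\jmath$-equivariance forces its zeroth-order term to commute with right multiplication by $j$. Hence at such points one cannot hope for simple spectrum, but only for quaternionic-simple spectrum (two-complex-dimensional eigenspaces) avoiding zero.

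Next I would establish transversality away from the fixed locus exactly as in the non-equivariant case. For irreducible critical points, and for reducibles $[B,0]$ with $B$ not a spin connection, the $\jmath$-action is free near the orbit in question and near its $\jmath$-image, so one runs the usual Sard--Smale argument with perturbations supported symmetrically on the pair of orbits; there is no quaternionic obstruction, so one obtains nondegeneracy in the usual sense, together with simple spectrum of $D_{B,\mathfrak{q}}$ avoiding zero at the non-fixed reducibles, where $D_{B,\mathfrak{q}}$ is an unconstrained self-adjoint operator. This yields the first two bullets of the lemma and the analogue of Lemma \ref{critblow} in those cases.

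For the fixed points I would run the Sard--Smale argument inside the quaternionic category: within the space of quaternionic-linear self-adjoint perturbations of $D_{B_0}$, the locus where some quaternionic eigenvalue has multiplicity larger than one, or where $0$ is an eigenvalue, is of positive codimension, so generically the eigenspaces are quaternionic lines and $0$ is avoided. Passing to the blow-up, the unit sphere $S^3$ in such an eigenspace, quotiented by the residual constant gauge circle acting by left multiplication by $e^{i\theta}$, is a copy of $\mathbb{C}P^1$, and this is a critical submanifold of $(\mathrm{grad}(\mathcal{L}+\mathfrak{q}))^{\sigma}$. Morse--Bott nondegeneracy in the normal directions splits into the connection directions, nondegenerate by the preliminary perturbation above, and the remaining spinorial directions, nondegenerate precisely because the other eigenvalues of $D_{B,\mathfrak{q}}$ are quaternionic-simple and nonzero. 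One must of course check that this last round of perturbation does not destroy the transversality already achieved elsewhere, which is routine once these perturbations are supported near the fixed locus. The main obstacle is exactly this quaternionic Sard--Smale step: one has to verify that the universal moduli space is cut out transversely when the parameter space is restricted to $\jmath$-equivariant perturbations, and the relevant cokernel computations near the critical $\mathbb{C}P^1$'s now carry a $\Pin$-equivariance (the residual $S^1$ together with the involution $\jmath$) rather than a mere $S^1$-equivariance, so the standard free-perturbation arguments must be re-examined in that setting.

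Finally I would analyze the $\jmath$-action on the critical set. Since $\mathcal{L}+\mathfrak{q}$ is $\jmath$-invariant, $\jmath$ is a diffeomorphism of $\mathcal{B}^{\sigma}(Y,\spin)$ carrying the flow of $(\mathrm{grad}(\mathcal{L}+\mathfrak{q}))^{\sigma}$ to itself, hence it permutes critical points and preserves their type; the irreducible locus is $\jmath$-invariant, giving the first claim. On the reducibles, $\jmath$ sends the blow-down $[B,0]$ to $[\bar B,0]$ and conjugates $D_{B,\mathfrak{q}}$ to $D_{\bar B,\mathfrak{q}}$ by an isometry, using once more the $\jmath$-equivariance of $\mathfrak{q}$, so it carries the tower over $[B,0]$ to the tower over $[\bar B,0]$ respecting eigenvalues. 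Over a fixed point $[B,0]$, $\jmath$ preserves the critical $\mathbb{C}P^1$, and under the identification of the latter with the projectivization of the quaternionic eigenspace the induced map is the one coming from right multiplication by $j$ on the spinor. This map is $\mathbb{C}$-antilinear, squares to $-1$, which is the residual gauge transformation and hence acts trivially on $\mathbb{C}P^1$, and fixes no complex line, since an invariant line would be a quaternionic subspace of complex dimension one, which is impossible. Therefore the induced involution on $S^2\cong\mathbb{C}P^1$ is free, and a free smooth involution of $S^2$ is the antipodal map. The detailed verification of all of the above is carried out in Chapter 4 of \cite{Lin}.
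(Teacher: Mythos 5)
Your proof is correct and follows the same route as the paper's (very terse) proof: transversality is only obstructed at the $\jmath$-fixed reducibles, where one runs a quaternionic version of Sard--Smale to generically obtain quaternionic-simple spectrum avoiding zero, yielding $\mathbb{C}P^1$'s of Morse--Bott critical points whose $\jmath$-action is the antipodal map. You have simply spelled out the steps, the equivariant perturbation setup, and the $\mathbb{C}$-antilinearity argument for the free involution, that the paper dismisses in a few lines.
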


\begin{proof}
Transversality is an issue only at the fixed points of the action downstairs, so that the only issue is at the spin connections. In this case, we can arrange generically that the eigenspaces are two dimensional (one dimensional quaternionic vector spaces). As in Exercise \ref{crit}, the critical points are then just the projectivizations of the eigenspaces, which are diffeomorphic to $\mathbb{C}P^1$. It is straighfroward to identify the action of $\jmath$.
\end{proof}
\begin{Exercise}
Check in the finite dimensional model that when $L$ is quaternionic linear the critical submanifolds are Morse-Bott. Of course, one has to assume that the underlying space is quaternionic.
\end{Exercise}
\vspace{0.5cm}

\textbf{Morse-Bott homology.} We need a framework to define homology via Morse-Bott functions. This has many technical complication in our setting but the main idea is very neat and we sketch it in the (closed) finite dimensional setting. In this case it was introduced in \cite{Fuk}, see also \cite{Hut}. Consider a Morse-Bott function $f$ on $M$. The underlying vector space of the Morse-Bott chain complex is the direct sum of (some variants of) the singular chain complexes of the critical submanifolds
\begin{equation*}
C_*(M,f)=\bigoplus_{\mathrm{C}} C_*(\mathrm{C}).
\end{equation*}
The differential combines the singular differential of each summand together with terms involving different critical submanifolds. In particular given critical submanifolds $\mathrm{C}_{\pm}$, there are evaluation maps on the compactified moduli spaces of trajectories connecting them
\begin{equation*}
\mathrm{ev}_{\pm}:\breve{M}^+(\mathrm{C}_-,\mathrm{C}_+)\rightarrow \mathrm{C}_{\pm}
\end{equation*}
sending a trajectory to its limit points. Then a singular chain
\begin{equation*}
f:\sigma\rightarrow \mathrm{C}_-
\end{equation*}
gives rise (under suitable transversality hypothesis) to the chain
\begin{equation*}
\mathrm{ev_+}: \sigma\tilde{\times} \breve{M}^+(\mathrm{C}_-,\mathrm{C}_+)\rightarrow \mathrm{C}_+
\end{equation*}
where the underlying space is the fibered product under the maps $f$ and $\mathrm{ev}_-$. The total differential of $\sigma$ is then defined to be the sum of its singular differential and all these chains obtained via fibered products. The proofs in this new framework carry over with the same formulas as the usual one: identities relating zero dimensional moduli spaces coming from boundaries of one dimensional spaces are now identities (at the chain level) of chains arising as the codimension one strata of fibered products as above. Of course, we need to consider chains $\sigma$ in some class of geometric objects so that the fibered products with the moduli spaces remain in that class.
\begin{exm}\label{morsebott}
Consider on $\mathbb{C}^3$ an operator $L$ with only two distinct eigenvalues $\lambda>\mu$, the latter having multiplicity two. The critical points on $\mathbb{C}P^2$ are then given by a point of index $4$ and a copy of $\mathbb{C}P^1$ with index $0$. We can apply our construction of Morse-Bott homology to compute the homology of $\mathbb{C}P^2$ using this function. The explicit computation is tricky because now the chain complex is infinite dimensional. On the other hand the chain complex $C_*(M,f)$ has a natural filtration provided by the value of $f$. The associate spectral sequence has $E^1$ page given by the homology of the critical submanifolds. In our case there cannot be higher differentials for grading reasons, so we recover the standard result: the homology is $\ztwo$ is degrees $0,2,4,6$ and zero otherwise.
\end{exm}

\vspace{0.5cm}
We can apply the construction we have just sketched to the case we are interested in. We obtain three Morse-Bott chain complexes $\check{C}_*(Y,\spin),\hat{C}_*(Y,\spin),\bar{C}_*(Y,\spin)$. These compute the usual monopole Floer homology groups. Furthermore, they carry a chain involution $\jmath$ given by applying the involution $\jmath$ to the chains generating the complexes. The new Floer groups (\ref{pin2}) are defined as the homology of the invariant subcomplexes.

\begin{exm}We compute $\HSt_*(S^3)$, following Example \ref{exS3}. In this new setting, the infinite tower of points is replaced by an infinite tower of $S^2$ on which $\jmath$ acts as the antipodal map. The relative grading of points in consecutive critical submanifolds is $4$ (see Exercise \ref{traj}). To compute the homology, we can apply the same trick as in Example \ref{morsebott} (with the function $\Lambda$ in place of $f$). The key point is that this filtration is invariant under the action of $\jmath$. In particular the $E^1$ page of the associated spectral sequence is the direct sum of the the homologies of the invariants of the chain complexes of the critical submanifolds. These can be easily identified with the homology of the quotient, which is just a copy of $\mathbb{R}P^2$. From this discussion Proposition \ref{S3pin} follows.
\end{exm}

\vspace{1cm}
\section{The correction terms and The Triangulation conjecture}

Manolescu introduced his new invariants in order to solve the almost one-hundred-year-old Triangulation conjecture. This asserted that every topological manifold is homeomorphic to a simplicial complex. This was already known to be true in dimension at most three and false in dimension four. We refer the reader to \cite{Man3} and \cite{Man4} for a more thorough discussion of the background. Here we discuss how the disproof can be carried over in our setting (in a formally analogous way). The first key step is a classic result of Galeski-Stern and Matumoto, reducing the problem to a low-dimensional one.
\begin{thm}[\cite{Gal}, \cite{Mat}]\label{80s}
The Triangulation Conjecture is false in every dimension $\geq 5$ if and only if there are no order two elements with Rokhlin invariant one in the homology cobordism group $\Theta^H_3$.
\end{thm}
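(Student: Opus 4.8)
The plan is to reduce the dimension-independent combinatorial question to the algebra of the short exact sequence
\[
0 \longrightarrow \ker\mu \longrightarrow \Theta^H_3 \stackrel{\mu}{\longrightarrow} \ztwo \longrightarrow 0 ,
\]
where $\mu$ is the Rokhlin homomorphism, $\mu(Y)=\tfrac18\,\mathrm{sign}(W)\bmod 2$ for any smooth compact spin four-manifold $W$ with $\partial W=Y$. This is well defined by Rokhlin's theorem (closed spin four-manifolds have signature divisible by $16$) and descends to $\Theta^H_3$ by cutting and pasting spin fillings along a homology cobordism. A splitting of this sequence is precisely the datum of an order-two class $Y\in\Theta^H_3$ with $\mu(Y)=1$, so the theorem is equivalent to the assertion: every topological $n$-manifold with $n\geq 5$ is triangulable if and only if the displayed sequence splits.

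\textbf{Obstruction theory for triangulations.} First I would set up the triangulation obstruction in parallel with the Kirby--Siebenmann theory of $\mathrm{PL}$ structures. Recall that for $n\geq 5$ one has $\mathrm{TOP}/\mathrm{PL}\simeq K(\ztwo,3)$, so a topological $n$-manifold $M$ admits a $\mathrm{PL}$ structure iff the Kirby--Siebenmann class $\kappa(M)\in H^4(M;\ztwo)$ vanishes. A simplicial triangulation of $M$, not necessarily combinatorial, is the same as a $\mathrm{PL}$ homology-manifold structure on $M$ whose vertex links are genuine homology spheres; by the Edwards--Cannon double suspension theorem any $\mathrm{PL}$ homology manifold all of whose links are homology spheres is a topological manifold, and conversely the only obstruction to deforming a given triangulation so that all links become honest spheres is the collection of homology cobordism classes of those links, i.e.\ an invariant valued in $\Theta^H_3$. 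Galewski--Stern package this into a classifying space refining $B\mathrm{PL}\to B\mathrm{TOP}$, with fiber built from $\Theta^H_3$, and extract a \emph{triangulation obstruction}
\[
\Delta(M)\in H^5(M;\ker\mu),\qquad \Delta(M)=\beta\bigl(\kappa(M)\bigr),
\]
where $\beta\colon H^4(-;\ztwo)\to H^5(-;\ker\mu)$ is the Bockstein operation of the sequence above, such that $M$ is triangulable if and only if $\Delta(M)=0$.

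\textbf{The two implications.} Granting this, the rest is formal. If the sequence splits, then $\beta$ is the zero operation, hence $\Delta(M)=0$ for every topological manifold $M$ of dimension $\geq 5$, so all such manifolds are triangulable. Conversely, if the sequence does not split, then $\beta$ is a nontrivial stable cohomology operation: indeed $\beta(\iota_4)\neq 0$ in $H^5(K(\ztwo,4);\ker\mu)$, this class being exactly the obstruction to lifting the fundamental class along the non-split extension. One then fixes, for each $n\geq 5$, a smooth $n$-manifold $M_0$ together with a class $c\in H^4(M_0;\ztwo)$ with $\beta(c)\neq 0$ (obtained by pulling back $\iota_4$ along a suitable map $M_0\to K(\ztwo,4)$), and applies the Kirby--Siebenmann realization theorem to produce a topological manifold $M$ homotopy equivalent to $M_0$ with $\kappa(M)=c$. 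Then $\Delta(M)=\beta(c)\neq 0$, so $M$ is not triangulable and the Triangulation Conjecture fails in dimension $n$.

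\textbf{Main obstacle.} The serious content, and the genuine contribution of \cite{Gal} and \cite{Mat}, is the construction of the obstruction $\Delta$ and the identification $\Delta=\beta\kappa$: this requires the full apparatus of $\mathrm{PL}$ homology-manifold topology --- resolution obstructions, controlled surgery, the double suspension theorem, and a careful comparison of the classifying spaces for $\mathrm{TOP}$, $\mathrm{PL}$ and $\mathrm{PL}$ homology-manifold bundles. By contrast, the algebra of the extension, the vanishing/non-vanishing of the Bockstein $\beta$, and the final realization step are routine.
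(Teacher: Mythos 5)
The paper does not prove this theorem; it is imported verbatim from Galewski--Stern and Matumoto, and the notes build on it without argument, so there is no internal proof to compare against. Your sketch is a fair outline of the actual Galewski--Stern argument: the translation of ``no order-two class with Rokhlin invariant one'' into ``the Rokhlin sequence does not split'' is correct, and the identification of the triangulation obstruction as the Bockstein image $\beta(\kappa(M))\in H^5(M;\ker\mu)$, together with the assertion that this is a complete obstruction for closed manifolds of dimension $\geq 5$, is exactly the Galewski--Stern/Matumoto theorem. You also correctly locate the real content in the construction of that obstruction (PL homology-manifold bundle theory, double suspension, the comparison of classifying spaces) rather than in the surrounding homological algebra. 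Two points worth sharpening. First, the nonvanishing $\beta(\iota_4)\neq 0$ when the sequence does not split is standard but deserves a word: the degree-one Bockstein class in $H^2(K(\mathbb{Z}/2,1);\ker\mu)\cong\mathrm{Ext}(\mathbb{Z}/2,\ker\mu)$ is precisely the extension class of the sequence, and the Bockstein is a stable operation, so $\beta(\iota_n)\neq 0$ for all $n$. Second, the ``realization'' step at the end is not a bare application of Kirby--Siebenmann: you need a closed five-manifold on which the pullback of $\beta(\iota_4)$, with its infinitely generated coefficient group $\ker\mu$, is actually nonzero in $H^5$, and Galewski--Stern produce such a manifold by an explicit construction rather than by invoking a general realization theorem off the shelf. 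With that caveat noted, the structure of your argument matches theirs.
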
 
Recall that the homology cobordism group $\Theta^H_3$ is the group whose elements are oriented homology three-spheres up to homology cobordism. The addition is given by connected sum and the inverse is obtained by reversing the orientation. The definition works in every dimension (using PL spheres) but all the others are trivial by a result of Kervaire. One way to see that $\Theta^H_3$ is not trivial is via the Rokhlin invariant, which is a homomorphism
\begin{equation*}
\Theta^H_3\rightarrow\mathbb{Z}/2\mathbb{Z}.
\end{equation*}
This is obtained by sending a homology sphere $Y$ to $\sigma(W)/8$ where $W$ is any spin manifold whose boundary is $Y$. This is well defined by Rokhlin's signature theorem (see Example \cite{Kir}) and surjective (see Exercise \ref{E8}).
\\
\par
Manolescu's disproof goes through the following existence theorem.
\begin{thm}[\cite{Man2}]\label{beta}
There exists an integer valued map $\beta$ on $\Theta^H_3$ such that:
\begin{itemize}
\item $\beta$ is an integral lift of the Rokhlin invariant;
\item $\beta(-Y)$ is $-\beta(Y)$.
\end{itemize}
\end{thm}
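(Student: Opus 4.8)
The plan is to \emph{define} $\beta$ as the middle of three ``correction terms'' $\alpha(Y)\ge\beta(Y)\ge\gamma(Y)$ read off from the $\Rin$-module structure on $\HSt_*$, in exact analogy with the way the Fr\o yshov invariant $h$ is extracted from $i_*\bigl(\HMb_*(Y)\bigr)$ in Definition \ref{froyshov}, and then to verify the two bullets one at a time. A class in $\Theta^H_3$ is represented by a homology sphere $Y$, carrying a unique spin$^c$ structure $\spin$, which is the unique self-conjugate one (it comes from the unique spin structure of $Y$), so the $\Pin(2)$ groups make sense. By the rational-homology-sphere part of Proposition \ref{S3pin}, $\HSb_*(Y,\spin)\cong\HSb_*(S^3)=\V_2\oplus\V_1\oplus\V_0$ up to an overall grading shift, the three towers being cyclically carried one onto the next by $Q$; and $i_*\colon\HSb_*(Y)\to\HSt_*(Y)$ is an isomorphism in all high enough degrees, so $\HSt_*(Y)$ likewise contains three infinite $V$-towers, permuted by $Q$, which exhaust $\operatorname{im}i_*$ above some grading. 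I let the gradings at which these towers first appear in $\operatorname{im}i_*$ be $2\alpha(Y)\ge 2\beta(Y)\ge 2\gamma(Y)$, after the single additive normalization making $\alpha(S^3)=\beta(S^3)=\gamma(S^3)=0$; the labels are forced since $Q$ sends the $\alpha$-tower onto the $\beta$-tower onto the $\gamma$-tower as for $S^3$. Then $\beta(Y)$ is the middle one.

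That $\beta$ does not depend on the chosen representative is the $\Pin(2)$ analogue of Corollary \ref{froy}. Given a homology cobordism $W$ from $Y_0$ to $Y_1$, with its automatically self-conjugate spin$^c$ structure, the induced map $\HSb_*(W)$ is an \emph{isomorphism}: the reducible moduli spaces are analysed exactly as in the proof of Proposition \ref{fro}, using $b_1(W)=b_+(W)=0$ and Hodge theory on a manifold with homology-sphere ends. By the $\Pin(2)$ version of Proposition \ref{gradingprop}(2) this map has degree $0$, since $c_1(S^+)^2=0$, $\iota(W)=0$ and $\sigma(W)=0$; feeding this into the commutative square relating $\HSb_*$, $\HSt_*$ and $i_*$ (as in the proof of Proposition \ref{fro}), and repeating with the reversed cobordism, forces the three towers of $\HSt_*(Y_0)$ and $\HSt_*(Y_1)$ to begin in the same gradings, so $\alpha,\beta,\gamma$ agree. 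Integrality of $\beta(Y)$ is a grading-parity statement: in the ordinary theory the absolute $\mathbb{Q}$-gradings of reducible critical sets on a homology sphere are even integers (Proposition \ref{gradingprop}(1)), and via the Gysin sequence (which compares $\HSt_*$ with $\HMt_*$, with $V$ acting as $U^2$) together with the Morse--Bott description of the critical $\mathbb{C}P^1$'s this pins the three $\HSt$-towers to the residues modulo $4$ they occupy for $S^3$, so the differences of tower gradings from those of $S^3$ are even and $2\beta(Y)\in 2\mathbb{Z}$.

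For the first bullet, one compares $Y$ with $S^3$ through a spin filling. Choose a spin $4$-manifold $W$ with $\partial W=Y$, so that $\mu(Y)=\sigma(W)/8\bmod 2$ (the intersection form of $W$ is even unimodular, hence $8\mid\sigma(W)$); after surgery assume $b_1(W)=0$ and remove a ball to view $W$ as a spin, hence self-conjugate, cobordism $W'$ from $S^3$ to $Y$ with $c_1(S^+)=0$. The grading of the bottom of the $\beta$-tower of $\HSt_*(Y)$ is then governed, through the defining formula for $\mathrm{gr}^{\mathbb{Q}}$ and the index computation in the proof of Proposition \ref{gradingprop}, by the index of the Dirac operator on $W'$, which when $c_1(S^+)=0$ equals $-\sigma(W')/8$ and is therefore $\equiv\sigma(W)/8\pmod 2$; tracking this through the normalization gives $\beta(Y)\equiv\mu(Y)\pmod 2$. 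For the second bullet, the $\Pin(2)$ version of Poincar\'e--Lefschetz duality (the analogue of Proposition \ref{poincare}) gives $\HSt^*(Y)\cong\HSf_*(-Y)$, and the grading behaviour of the duality map (the analogue of Proposition \ref{gradingprop}(3), with $b_1=0$) together with the long exact sequence relating the three flavours shows that passing to $-Y$ reverses the triple of towers: it interchanges the $\alpha$- and $\gamma$-towers and negates the grading of the $\beta$-tower. Hence $\alpha(-Y)=-\gamma(Y)$, $\gamma(-Y)=-\alpha(Y)$ and, crucially, $\beta(-Y)=-\beta(Y)$; this is precisely why $\beta$, and not $\alpha$ or $\gamma$, has the required symmetry.

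The formal skeleton above is identical to the Fr\o yshov discussion, so the genuine work is twofold. First, one must set up $\Pin(2)$-equivariant transversality, the Morse--Bott chain complexes, the maps induced by cobordisms and their composition law in the presence of the critical $\mathbb{C}P^1$'s; this is technically heavy but conceptually routine given the previous section. Second, and this is where I expect the main obstacle to lie, is the grading bookkeeping: fixing the exact normalization and proving the residue-mod-$4$ and parity statements that make $\beta$ simultaneously an integer and an honest lift of $\mu$, while carefully keeping track of the $\mathbb{Q}$-gradings of the Morse--Bott reducibles, of the degree of the Dirac operator on a possibly indefinite spin filling (which may require stabilising $W$ so that the reducible computation still goes through), and of the $-1$-shift phenomenon of Remark \ref{gradingchain}.
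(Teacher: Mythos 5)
Your strategy is essentially the paper's own: $\beta$ is the middle of three correction terms read off from the $\Rin$-module structure of $i_*(\HSb_\bullet)\subset\HSt_\bullet$, integrality and the Rokhlin-lift follow from grading arithmetic on a spin filling, the symmetry $\beta(-Y)=-\beta(Y)$ comes from Poincar\'e--Lefschetz duality (which swaps the $\alpha$- and $\gamma$-towers), and the cobordism-map inequalities give homology-cobordism invariance exactly as in the Fr\o yshov argument. One bookkeeping slip worth flagging: the three tower bottoms sit at gradings $2\alpha$, $2\beta+1$, $2\gamma+2$ rather than $2\alpha\ge 2\beta\ge 2\gamma$, with $Q$ (of degree $-1$) carrying the $\gamma$-tower onto the $\beta$-tower onto the $\alpha$-tower --- the opposite direction to what you write --- and this offset is precisely what makes the parity arithmetic for integrality come out; the paper obtains the needed congruence $\mathrm{gr}^{\mathbb{Q}}\equiv -\sigma(W)/4\pmod 4$ directly from the fact that the Dirac operator on a spin filling is quaternionic linear (so has real index divisible by $4$), which is cleaner than routing through the Gysin sequence as you propose, though your route should also work.
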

\begin{proof}[Disproof of the Triangulation Conjecture in dimension at least $5$.]
We prove the equivalent condition provided by Theorem \ref{80s}. If a homology sphere has order two, $[Y]$ is the same as $[-Y]$. The second property of $\beta$ in Theorem \ref{beta} implies that
\begin{equation*}
\beta([Y])=\beta([-Y])=-\beta([Y])
\end{equation*}
so $\beta([Y])$ is zero. Hence also the Rokhlin invariant is zero by the first property.
\end{proof}

\vspace{0.3cm}
Notice the analogy between Theorem \ref{beta} and Corollary \ref{froy}. Indeed, $\beta$ arises as an analogue in the $\Pin$ theory of the Fr\o yshov invariant. Recall the computation of \textit{HS-bar} for a homology sphere $Y$ in Proposition \ref{S3pin}. Its image via $i_*$ in $\HSt_{\bullet}(Y,\spin)$, considered as an $\ztwo[[V]]$-module, decomposes as the direct sum of three towers
\begin{equation*}
\V^+_c\oplus \V^+_b\oplus \V^+_a.
\end{equation*}
The action of $Q$ sends the first tower onto the second and the second tower onto the third. Manolescu's correction terms are then defined to be numbers
\begin{equation*}
\alpha(Y)\geq\beta(Y)\geq\gamma(Y),
\end{equation*}
such that
\begin{equation*}
a=2\alpha(Y),\quad b= 2\beta(Y)+1, \quad c=2\gamma(Y)+2.
\end{equation*}
The inequalities between these quantities follow from the module structure.
\begin{rem} 
It is important to remark that the quantities $\alpha, \beta$ and $\gamma$ are a priori different different from those defined by Manolescu in \cite{Man2}. On the other hand, we call them Manolescu's correction terms as they are conjecturally equivalent to those arising from his (formally equivalent) construction.
\end{rem}
The following is the key result (from which Theorem \ref{beta} readily follows).
\begin{prop}
Manolescu's correction terms $\alpha,\beta,\gamma$ satisfy the following properties.
\begin{enumerate}
\item They are integral lifts of the Rokhlin invariant.
\item	$\alpha(-Y)$, $\beta(-Y)$ and $\gamma(-Y)$ are respectively $-\gamma(Y)$, $-\beta(Y)$ and $-\alpha(Y)$.
\item If $W$ is a negative definite spin cobordism between $Y_0$ and $Y_1$, we have the inequalities
\begin{align*}
\alpha(Y_1)&\geq \alpha(Y_0)+b_2(W)/8\\
\beta(Y_1)&\geq \beta(Y_0)+b_2(W)/8\\
\gamma(Y_1)&\geq \gamma(Y_0)+b_2(W)/8
\end{align*}
In particular, they are invariant under homology cobordism.
\end{enumerate}
\end{prop}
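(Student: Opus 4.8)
The plan is to run the argument in parallel with the proofs of Proposition \ref{fro} and Corollary \ref{froy}, upgrading them with the extra bookkeeping carried by the $\Rin$-module structure and the absolute $\mathbb{Q}$-grading on the $\Pin$ Floer groups.

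For part $(3)$, given a negative definite spin cobordism $W$ from $Y_0$ to $Y_1$, I would first perform surgeries in the interior of $W$ to arrange $b_1(W)=0$ without altering the intersection form, exactly as in the proof of Proposition \ref{fro}; the spin structure then equips $W$ with a self-conjugate spin$^c$ structure $\spin_W$ with $c_1(S^+)=0$. The key claim is that $\HSb_*(W,\spin_W)\colon \HSb_*(Y_0)\to \HSb_*(Y_1)$ is an isomorphism of graded $\Rin$-modules, its degree being $-\tfrac14\sigma(W)=\tfrac14 b_2(W)$ by the $\Pin$-analogue of Proposition \ref{gradingprop}$(2)$ (using $c_1(S^+)=0$, $\sigma(W)=-b_2(W)$, and $\iota(W)=0$ for a cobordism of homology spheres with $b_1=0$). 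As in Proposition \ref{fro}, this is proved by directly inspecting the reducible moduli spaces on $W^*$: there is a unique flat connection $A_0$ (since $b_1(W)=0$), it is regular (since $b_+(W)=0$), and the solutions in the blow-up are projectivizations of the elements of $\ker D^+_{A_0}$ with the correct asymptotics at the two homology-sphere ends --- and $D^+_{A_0}$ is quaternionic linear, so the analysis is formally identical to the computation of $\HSb_*(S^3)$ and yields an isomorphism. Then, as in Proposition \ref{fro}, one forms the commuting square of $\Rin$-modules with horizontal edges $\HSb_*(W,\spin_W)$, $\HSt_*(W,\spin_W)$ and vertical edges $i_*$; since the top edge is an isomorphism, $i_*(\HSb_*(Y_1))=\HSt_*(W,\spin_W)\bigl(i_*(\HSb_*(Y_0))\bigr)$, and as $\HSt_*(W,\spin_W)$ is homogeneous of degree $\tfrac14 b_2(W)$ the bottom grading of each of the three towers $\V^+_c\oplus\V^+_b\oplus\V^+_a$ can drop by at most $\tfrac14 b_2(W)$. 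Translating through $a=2\alpha$, $b=2\beta+1$, $c=2\gamma+2$ gives the three displayed inequalities, and applying them to a homology cobordism and its orientation reversal gives invariance under homology cobordism.

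For part $(1)$, since $\spin$ is torsion the $\mathbb{Q}$-grading is defined. I would compute the grading $g$ of the bottom reducible critical submanifold over a spin connection by choosing a spin filling $X$ of $Y$ and a reducible connecting solution on $W=X\setminus B^4$: by the index computation in the proof of Proposition \ref{gradingprop} the formal dimension equals the real Dirac index plus $-\iota(W)$, the real Dirac index being divisible by $4$ because the operator is quaternionic linear; feeding this into the definition of $\mathrm{gr}^{\mathbb{Q}}$ the $\iota(W)$-terms cancel and one is left with $g\equiv -\tfrac14\sigma(X)\equiv 2\mu(Y)\pmod 4$, using Rokhlin's congruence $\sigma(X)\equiv 8\mu(Y)\pmod{16}$. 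Since $\HSb_*(Y,\spin)$ is $\HSb_*(S^3)$ shifted by $g$, the three towers of $i_*(\HSb_*(Y))$ lie in gradings $\equiv g,g+1,g+2\pmod 4$; this simultaneously shows $a$ even, $b$ odd, $c$ even --- so $\alpha,\beta,\gamma\in\mathbb{Z}$ --- and that $\alpha\equiv\beta\equiv\gamma\equiv\mu(Y)\pmod 2$. For part $(2)$ I would argue as in the proof of $h(-Y)=-h(Y)$ in Corollary \ref{froy}, using the $\Pin$-analogue of Poincar\'e--Lefschetz duality (Proposition \ref{poincare}) and the grading reversal of Proposition \ref{gradingprop}$(3)$ (which for a homology sphere reads $j\mapsto -1-j$); the one new ingredient is that the duality isomorphism reverses both the grading and the direction of the $Q$-action, hence reverses the ordering of the three towers, so the top tower of $Y$ corresponds to the bottom tower of $-Y$ and one gets $\alpha(-Y)=-\gamma(Y)$, $\beta(-Y)=-\beta(Y)$, $\gamma(-Y)=-\alpha(Y)$.

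The hard part will be the isomorphism claim in part $(3)$: it rests on an explicit analysis of the $\Pin$-equivariant reducible Seiberg-Witten moduli spaces on the negative definite spin cobordism $W^*$ with cylindrical ends --- the Hodge theory on manifolds with cylindrical ends (as in \cite{APS}), the uniqueness and regularity of the flat connection, and the quaternionic-linear bookkeeping for $D^+_{A_0}$. Once this (together with the $\Pin$-analogues of Propositions \ref{gradingprop}, \ref{poincare} and the structural content of Proposition \ref{S3pin}) is in hand, everything else is formal.
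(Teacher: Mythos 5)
Your proposal follows essentially the same route as the paper's sketch: for (1) the mod-4 congruence of the reducible grading via $c_1^2=0$, the quaternionic Dirac index being a multiple of four, and Rokhlin; for (2) Poincar\'e--Lefschetz duality reversing both the $\mathbb{Q}$-grading and the $Q$-tower ordering; for (3) surgering to $b_1(W)=0$, showing $\HSb_*(W,\spin)$ is a degree-$b_2(W)/4$ isomorphism by inspecting the $\mathbb{C}P^1$ of reducibles coming from $\ker D^+_{A_0}$ on $W^*$, and transporting through the $i_*$-square exactly as in Proposition \ref{fro}. The only blemish is the wording ``the bottom grading \ldots can drop by at most $\tfrac14 b_2(W)$'', which should read that it must \emph{rise} by at least $\tfrac14 b_2(W)$ (since the isomorphism and the commuting square force $a_1\ge a_0+\tfrac14 b_2(W)$, separately for each tower by tracking $\ker Q\subset\ker Q^2\subset\ker Q^3$ under the $\Rin$-equivariant isomorphism); the inequalities you then extract are stated in the correct direction, so this is a slip of phrasing rather than of substance.
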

\begin{proof}
We sketch the proofs of these results. For part $(1)$, following the proof of Proposition \ref{gradingprop}, we now have that $c_1^2$ is zero (because $\spin$ is torsion) and the real index of the Dirac operator is divisible by four (because it is quaternionic linear). Hence the absolute gradings is congruent modulo four to $-\sigma(W)/4$, and the result follows. For the part $(2)$, the key observation is that via Poincar\'e duality the tower $\V^+_a$ defining $\alpha(Y)$ becomes the one defining $\gamma(-Y)$, and similarly for the other towers. The final statement follows as in Proposition \ref{fro}. After surgery, we can assume that $b_1=0$ and the manifold is spin. The relevant reducible moduli spaces are now copies of $\mathbb{C}P^1$ corresponding to the projectivization of the kernel of the quaternionic operator $D_{A_0}^+$. They map diffeomorphically onto the respective critical manifolds, hence the induced map
\begin{equation*}
\HSb_*(W,\spin):\HSb_*(Y_0)\rightarrow \HSb_*(Y_1)
\end{equation*}
is an isomorphism of degree $b_2(W)/4$.
\end{proof}

\vspace{0.5cm}
\textbf{Acknowledgements.} I would like to express my gratitude to the organizers of the G\"okova Geometry-Topology Conference 2015, and in particular Selman Akbulut. The idea of these notes started following an invitation of Andras Stipsicz, to give a lecture series at The Renyi Institute in Budapest in Summer 2016. I would like to thank both of them and the anonymous referee for the helpful comments and suggestions. A preliminary version of them was conceived for a lecture series given at the University of Pisa in Fall 2014, and I would like to thank them, and in particular Bruno Martelli, for their hospitality. Finally, I would like to thank my advisor Tom Mrowka for introducing me to this beautiful subject. This work was partially supported by the NSF grant DMS-0805841.


\begin{thebibliography}{99}
\bibitem[1]{APS} M. F. Atiyah, V. K. Patodi, and I. M. Singer,
{\em Spectral asymmetry and Riemannian geometry. I.}
Math. Proc. Cambridge Philos. Soc., 77:43-69, 1975.

\bibitem[2]{Bott} Raoul Bott,
{\em Lectures on Morse theory, old and new.}
Bull. Amer. Math. Soc. (N.S.), 7(2):331-358, 1982.

\bibitem[3]{Don4} S. K. Donaldson,
{\em An application of gauge theory to four-dimensional topology}
J. Differential Geom., 18(2):279-315, 1983.

\bibitem[4]{Elk} Noam D. Elkies,
{\em A characterization of the {${\bf Z}^n$} lattice.}
Math. Res. Lett., 2(3):321-326, 1995.

\bibitem[5]{Fuk} Kenji Fukaya,
{\em Floer homology of connected sum of homology 3-spheres.}
Topology, 35(1):89-136, 1996.

\bibitem[5]{Fro} Kim Froyshov,
{\em Monopole Floer homology for rational homology 3-spheres.}
Duke Math. J. 155 (2010), no. 3, 519?576.

\bibitem[6]{Gal} David E. Galewski and Ronald J. Stern,
{\em Classification of simplicial triangulations of topological
manifolds.}
Ann. of Math. (2), 111(1):1-34, 1980.

\bibitem[7]{GS} Robert E. Gompf and Andr\'as I. Stipsicz,
{\em 4-manifolds and Kirby calculus.}
Volume 20 of Graduate
Studies in Mathematics. American Mathematical Society, Providence, RI, 1999.

\bibitem[8]{HutLee} Michael Hutchings and Yi-Jen Lee,
{\em Circle-valued Morse theory and Reidemeister torsion.}
Geom. Topol., 3:369-396, 1999.

\bibitem[9]{Hut} Michael Hutchings,
{\em Lectures on Morse homology (with an eye towards Floer theory and pseudo- holomorphic curves).}
Preprint.

\bibitem[10]{Kir} Robion C. Kirby,
{\em The topology of 4-manifolds.}
Volume 1374 of Lecture Notes in Mathematics. Springer-Verlag, Berlin, 1989.

\bibitem[11]{KM} Peter Kronheimer and Tomasz Mrowka,
{\em Monopoles and three-manifolds.}
Volume 10 of New Mathematical Monographs. Cambridge University Press, Cambridge, 2007.

\bibitem[12]{KMOS} P. Kronheimer, T. Mrowka, P. Ozs\'ath, and Z. Szab\'o,
{\em Monopoles and lens space surgeries.}
Ann. of Math. (2), 165(2):457-546, 2007.

\bibitem[13]{Lin} Francesco Lin,
{\em A Morse-Bott approach monopole Floer homology and the Triangulation Conjecture.}
Preprint, arXiv:math/1404.4561, 2015.

\bibitem[14]{Man3} Ciprian Manolescu,
{\em The Conley index, gauge theory, and triangulations.}
J. Fixed Point Theory Appl., 13(2):431-457, 2013.

\bibitem[15]{Man2} Ciprian Manolescu,
{\em Pin(2)-equivariant Seiberg-Witten Floer homology and the Triangulation Conjecture.}
Preprint, arXiv:math/1303.2354, 2013.

\bibitem[16]{Man4} Ciprian Manolescu,
{\em Lectures on the Triangulation Conjecture.}
To appear in the Proceedings of The Gokova Geometry-Topology conference, 2016.

\bibitem[17]{Mat} Takao Matumoto,
{\em Triangulation of manifolds.}
In Algebraic and geometric topology (Proc. Sympos. Pure Math., Stanford Univ., Stanford, Calif., 1976), Part 2, Proc. Sympos. Pure Math., XXXII, pages 3?6. Amer. Math. Soc., Providence, R.I., 1978.

\bibitem[18]{Mor} John W. Morgan,
{\em The Seiberg-Witten equations and applications to the topology of smooth four- manifolds.}
Volume 44 of Mathematical Notes. Princeton University Press, Princeton, NJ, 1996.

\bibitem[19]{OSd} Peter Ozsv\'ath and Zolt\'an Szab\'o,
{\em Absolutely graded Floer homologies and intersection forms for four-manifolds with boundary.}
Adv. Math., 173(2):179-261, 2003.

\bibitem[20]{OS2} Peter Ozsv\'ath and Zolt\'an Szab\'o,
{\em Holomorphic disks and three-manifold invariants: properties and applications.}
Ann. of Math. (2), 159(3):1159-1245, 2004.

\bibitem[21]{OS1} Peter Ozsv\'ath and Zolt\'an Szab\'o,
{\em Holomorphic disks and topological invariants for closed three-manifolds.}
Ann. of Math. (2), 159(3):1027-1158, 2004.

\bibitem[22]{Sch} Matthias Schwarz,
{\em Morse homology.}
Volume 111 of Progress in Mathematics. Birkha?user Verlag, Basel, 1993.

\bibitem[23]{Tau} Clifford Henry Taubes,
{\em The Seiberg-Witten equations and the Weinstein conjecture.}
Geom. Topol., 11:2117-2202, 2007.

\bibitem[24]{Wit} Edward Witten,
{\em Monopoles and four-manifolds.}
Math. Res. Lett., 1(6):769-796, 1994.


\end{thebibliography}
\end{document}